\renewcommand{\familydefault}{ppl}
\address{Department of Mathematics, Colorado State University}
\email{renzo@math.colostate.edu}
\address{School of Mathematics and Statistics, The University of Sheffield}
\email{paul.johnson@sheffield.ac.uk}
\address{Eberhard Karls Universit\"at T\"ubingen, Fachbereich Mathematik}
\email{hannah@math.uni-tuebingen.de}
\address{Department of Pure Mathematics and Mathematical Statistics, University of Cambridge}
\email{dr508@cam.ac.uk}
\newtheorem{theorem}{Theorem}[section]
\newtheorem{corollary}[theorem]{Corollary}
\newtheorem{lemma}[theorem]{Lemma}
\newtheorem{proposition}[theorem]{Proposition}
\newtheorem{definition}[theorem]{Definition}
\newtheorem{warning}[theorem]{Warning}
\newtheorem{construction}[theorem]{Construction}
\newtheorem{convention}[theorem]{Convention}
\newtheorem{quasi-theorem}[theorem]{Quasi-Theorem}
\newtheorem{rem1}[theorem]{Remark}
\newenvironment{remark}{\begin{rem1}\em}{\end{rem1}}
\newtheorem{ex1}[theorem]{Example}
\newenvironment{example}{\begin{ex1}\em}{\end{ex1}}
\newtheorem{not1}[theorem]{Notation}
\newenvironment{notation}{\begin{not1}\em}{\end{not1}}
\newcommand{\CC} {{\mathbb C}}          
\newcommand{\NN} {{\mathbb N}}		
\newcommand{\PP}{\mathbb{P}}         
\newcommand{\RR} {{\mathbb R}}		
\newcommand{\ZZ} {{\mathbb Z}}
 \newcommand{%
     \scalebox{}{\input{}}  
}[2]{%
     \scalebox{#1}{\input{#2}}  
}
\newcommand{\trop}{t\!r\!o\!p}
\newcommand{\floor}{f\!l\!o\!o\!r}
\DeclareMathOperator{\Aut}{Aut}
\DeclareMathOperator{\val}{val}
\DeclareMathOperator{\mult}{\mathsf{mult}}
\DeclareMathOperator{\spec}{Spec}
\newcommand{\renzo}[1]{}
\newcommand{\rc}[1]{{  #1}}
\newcommand{\hannah}[1]{}
\newcommand{\dhruv}[1]{}
\newcommand{\paul}[1]{}
\title[Counting curves on surfaces: Tropical geometry \& the Fock space]{{\Large C}ounting curves on Hirzebruch surfaces \\ tropical geometry {\it \&} the Fock space}
\author[Cavalieri--Johnson--Markwig--Ranganathan]{Renzo Cavalieri \ \  Paul Johnson \ \  Hannah Markwig \ \  Dhruv Ranganathan}
\begin{document}

\maketitle

\begin{abstract}
We study the stationary descendant Gromov--Witten theory of toric surfaces by combining and extending a range of techniques -- tropical curves, floor diagrams, and Fock spaces. A correspondence theorem is established between tropical curves and descendant invariants on toric surfaces using maximal toric degenerations. An intermediate degeneration is then shown to give rise to floor diagrams, {giving} a geometric interpretation of this well-known bookkeeping tool in tropical geometry. In the process, we extend floor diagram techniques to include descendants in arbitrary genus. These floor diagrams are then used to connect tropical curve counting to the algebra of operators on the bosonic Fock space, and are shown to coincide with the Feynman diagrams of  appropriate operators. This extends work of a number of researchers, including Block--G\"ottsche, Cooper--Pandharipande, and Block--Gathmann--Markwig. 
\end{abstract}

\section{Introduction}

\subsection{Overview}

The scope of this manuscript is to explore the relationships between the following enumerative and combinatorial geometric theories of surfaces, studied by a number of researchers in the last decade:
\begin{enumerate}
	\item decorated floor diagram counting;
	\item logarithmic and relative Gromov--Witten theory of Hirzebruch surfaces;
	\item tropical descendant Gromov--Witten theory of Hirzebruch surfaces;
	\item matrix elements of operators on a bosonic Fock space.
	\end{enumerate}	

{\bf Floor diagrams} are loop free graphs on a linearly ordered set of vertices, further endowed with vertex, edge, and half-edge decorations as specified in Definition \ref{def-floor}. Each floor diagram is counted with a weight, coming from context in which it arises. Floor diagrams capture the combinatorial essence of the other three theories, in the sense that the simplest way to exhibit the above equivalences is through a weight preserving bijection between floor diagrams and specific ways to organize the enumeration in the other theories.

{\bf Gromov--Witten theory} studies the intersection theory on moduli spaces of maps from pointed curves to a target surface. We are concerned with two distinct flavours of this theory -- the relative and logarithmic invariants -- which impose tangency conditions along certain boundary divisors, as in Definition~\ref{def-logdescGWI} and Definition~\ref{def-reldescGWI}. These moduli spaces admit a virtual fundamental class, and zero dimensional cycles are constructed by capping with the virtual class two types of cycles: point conditions, corresponding to requiring a point on the curve to map to a specified point on the surface; and descendant insertions, which are Euler classes of certain tautological line bundles on the moduli space, associated to each marked point. The word \textit{stationary} refers to the fact that descendant insertions are always coupled with point conditions.
 In this work, we specify special tangency orders to the $0$ and $\infty$ sections of Hirzebruch surfaces, taking inspiration from the geometry of double Hurwitz numbers. In the logarithmic case, we specify \textit{transverse} contact along the torus invariant fibers. By using a degeneration of the relative geometry to a chain of Hirzebruch surfaces, in Theorem \ref{thm-flooralg}, the equivalence of the relative invariants with floor diagram counts is established. The relationship to logarithmic invariants is more subtle and passes through the tropical equivalence described below.
 
{\bf Tropical Gromov--Witten theory} of surfaces consists of the study of piecewise linear, balanced maps from tropical curves into $\RR^2$, see Definition \ref{def-tropdescGWI}.  One obtains a finite count by imposing point conditions (i.e. specifying the image of a contracted marked end on the plane), and tropical descendant conditions. The descendant conditions constrain the valency of the vertex  adjacent to a marked end. Each map is counted with a weight that arises as an intersection number on a certain moduli space of logarithmic stable maps. In good cases, these \rc{weights} can be further spread out as products \rc{of combinatorial factors} over the vertices. The directions and multiplicities of the infinite ends  define a Newton fan, which determines at the same time a toric surface, a curve class on it, and prescribed tangencies along the toric divisors, offering a natural candidate for a correspondence between the logarithmic and tropical theories.

The logarithmic theory is shown to coincide with the tropical count in Theorem \ref{thm-corres}, using the recently established \textit{decomposition formula} for logarithmic Gromov--Witten invariants~\cite{ACGS}.  The correspondence between the tropical count and the floor diagram count is established by a combinatorial argument. Specifically, after specializing the tropical point conditions, the contributing curves take a very special form, and become {\it floor decomposed}, meaning that  certain subgraphs of the tropical curves may be contracted to give rise to a floor diagram. The floor decomposition yields a nontrivial result for the logarithmic invariants -- namely, that the multiplicity of a floor decomposed tropical curve can be obtained in terms of the multiplicities associated to its vertices. A general such statement for logarithmic invariants is unknown, even for toric surfaces. An exploration of when such a vertex-local expression exists can be found in recent work of Mandel--Ruddat, where it is packaged as a ``tropical'' quantum field theory~\cite{MR19}. 

The {\bf bosonic Fock space} is a countably infinite dimensional vector space with a basis indexed by ordered pairs of partitions \rc{of positive integers}. It has an action of a Heisenberg algebra of operators, generated by two families of operators $a_s,b_s$ parameterized by the integers. The distinguished basis vectors can naturally be identified with tangency conditions along the $0$ and $\infty$ sections of a Hirzebruch surface. In Definition \ref{def-operator}, we construct a family of linear operators $M_{l}$ on the Fock space which are naturally associated to stationary descendant insertions.
To each (relative or logarithmic) Gromov--Witten invariant then corresponds a {\it matrix element} for an operator obtained as an appropriate composition of the $M_l$'s above. The equality between a the Gromov--Witten invariant and the corresponding matrix element  goes through a comparison with the floor diagrams count: by Wick's theorem a matrix element can be naturally evaluated as a weighted sum over Feynman graphs (see Definition \ref{def-Feynmangraph}). In Theorem \ref{thm-operator} we exhibit a weight preserving bijection between the Feynman graphs for a given matrix element, and the floor diagrams for the corresponding Gromov--Witten invariant.


\begin{figure}[h!]
\input{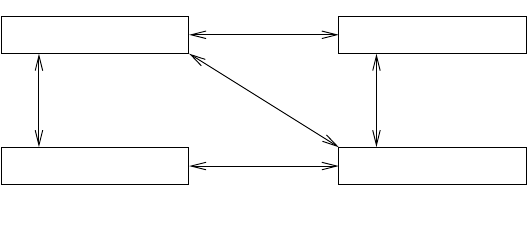_t}
\caption{An overview of the content and background.}\label{fig-results}

\end{figure}

\subsection{Context and Motivation}

This work provides an extension and unification of several previous lines of investigation on the subject. 
Correspondence theorems between tropical curve counts and primary Gromov--Witten invariants of surfaces -- those with only point conditions and no descendant insertions -- were established by Mikhalkin, Nishinou--Siebert, and Gathmann--Markwig in \cite{Mi03, NS06, GM052}; the tropical descendant invariants in genus $0$ was first investigated by Markwig--Rau~\cite{MR08}, and correspondence theorems were established independently, using different techniques, by A. Gross~\cite{Gro15} and by Mandel--Ruddat~\cite{MR16}. Tropical descendants have also arisen in aspects of the SYZ conjecture~\cite{Gro10,O15}.

Cooper and Pandharipande pioneered a Fock space approach to the Severi degrees of $\PP^1\times \PP^1$ and $\PP^2$ by using degeneration techniques~\cite{CP12}. Block and G\"ottsche generalized their work to a broader class of surfaces  (the $h$-transverse surfaces, see for instance~\cite{AB13} and~\cite{BM08}), and to refined curve counts, via quantum commutators on the Fock space side~\cite{BG14}. 
Both cases deal only with primary invariants. Block and G\"ottsche assign an operator on the Fock space to point insertions, and observe the connection between floor diagrams and Feynman graphs. We generalize their operator to a family of operators, one for each descendant insertion, and notice that the  operators can be written with summands naturally corresponding to the possible {\it sizes of the floor} (see Definition \ref{def-operator})  containing a particular descendant insertion. In the primary case, there were only floors of size $0$ (elevators) or $1$(floors), and hence the operator had two terms.

Section \ref{sec-floor} contains a brief summary of how floor diagrams came to be employed for these types of enumerative problems (Subsection \ref{workflo}). This discussion follows our definition of floor diagrams (Definition \ref{def-floor}), to explain and motivate some of the minor combinatorial tweaks we made in order to adapt to the current geometric context.
 \vspace{0.2cm}
 
 It is at this point a well understood philosophy that correspondence theorems between classical and tropical enumerative invariants are based on the fact that tropical curves encode the combinatorics of possible degenerations of the classical objects. The decomposition formula for the Gromov--Witten invariants of simple normal crossings degenerations allows us to equip tropical curves with a virtual multiplicity, and state the correspondence theorem between the virtual counts of tropical and algebraic curves~\cite{ACGS}. 
 
 An appealing feature of the generality provided by the logarithmic setup is that it produces a formula from which one can witness the collapsing of geometric inputs in different settings to give rise to a purely combinatorial theory. In genus $0$, the descendant contributions collapse into closed combinatorial formulas. Conceptually, this is because the intersection theory of the space of genus $0$ logarithmic maps is essentially captured by the intersection theory on a particular toric variety, see~\cite{R15b}. Without descendants but still in higher genus, there is a different collapsing -- on a surface, one can degenerate in such a way that all the algebraic inputs are $1$ up to multiplicity -- the multiplicity can be detected combinatorially, leading to Mikhalkin's formula (Section~\ref{rem-gen}). 

 A drawback of the logarithmic approach to this enumerative problem is that there is not yet a formula expressing the virtual multiplicity of a tropical curve in terms of vertex multiplicities, although such a formula is expected to exist\footnote{Since this paper first entered public circulation there has been addition progress on logarithmic degeneration formulas, but the resulting vertex formulas are still complicated and not immediately implementable~\cite{R19}.}. In lieu of it, we present two options. The first is to change our geometric setup to the older relative maps geometry. The second is to prove a vertex multiplicity formula for special choices of configurations of points. We do this by using tropical arguments to limit the types of tropical curves that can contribute to horizontally stretched descendant constraints. In both cases, the floor diagram connects the invariants to the Fock space.

  Restricting our attention to the study of invariants of \textbf{Hirzebruch surfaces} is a stylistic choice, as we strived to write a paper that communicates the various connections we explore, rather than making the most general statements possible.  Results of Section \ref{sec-tropdesc} could as well be formulated for any toric surface, results of Sections \ref{sec-floor},~\ref{troflo}, and \ref{sec-fock} for any toric surface dual to an $h$-transverse lattice polygon, see \cite[Section 2.3]{BG14}.   

 This paper is a sequel to the authors' work in~\cite{CJMR}, in which the relationship between tropical curves, Fock spaces, and degeneration techniques was studied for target curves, combining Okounkov and Pandharipande's seminal work in~\cite{OP06}, with the tropical perspective on the enumerative geometry of target curves~\cite{CJM1,CJM2,CMR14b,CMR14a}. We refer the reader to~\cite{CJMR} for a more detailed discussion of the history of the target curve case.
 
The paper is organized as follows. In Section \ref{sdhs} we present some basic facts about the geometry of Hirzebruch surfaces, and introduce logarithmic and relative stationary descendant Gromov--Witten  invariants. Section \ref{sec-tropdesc} introduces the tropical theory of descendant stationary invariants of Hirzebruch surfaces, and proves the correspondence theorem with the logarithmic theory. 
In Section \ref{sec-floor} we define our version of decorated floor diagrams,  explain the connection with the previous notions in the literature, and then  compare floor diagram counts with the relative theory, as an application of the degeneration formula. In section \ref{troflo}, we develop a vertex multiplicity formula for floor decomposed tropical maps. We then provide a correspondence theorem relating the count of floor diagrams with the tropical theory, using a combinatorial argument and keeping track of the local vertex multiplicities.  
Section \ref{sec-fock} provides a brief and hopefully friendly introduction to the Fock space, and then proves the equivalence between floor diagram counts and matrix elements for specific operators in the Fock space. 
 
 \subsection*{Acknowledgements}
The work presented in this text was initiated during a Research in Pairs program at the Oberwolfach Institute for Mathematics and completed while various subsets of the authors were at the Fields Institute and the American Institute of Mathematics. We thank the institutes for their hospitality and for providing excellent working conditions. D.R. was a student at Yale University and a member at the Institute for Advanced Study during important phases of this work, and acknowledges friends and colleagues at these institutions for support and encouragement. 

\subsection*{Funding} R.C. acknowledges support by NSF grant FRG-1159964 and Simons collaboration grant 420720. H.M. acknowledges support by DFG-grant MA 4797/6-1. D.R. acknowledges support by NSF grants CAREER  DMS-1149054 (PI: Sam Payne) and DMS-1128155 (Institute for Advanced Study).

\vspace{0.2cm}
\setcounter{tocdepth}{1}
\tableofcontents

\section{Relative and logarithmic descendants}
\label{sdhs}

We study two closely related algebro-geometric curve counting theories attached to a Hirzebruch surface -- the relative and logarithmic Gromov--Witten invariants with stationary descendants. 

For $k \geq 0$, the \textbf{Hirzebruch surface} $\mathbb{F}_k$ is defined to be the surface $\mathbb{P}(\mathcal{O}_{\PP^1}\oplus \mathcal{O}_{\PP^1}(k))$; 
it is a smooth projective toric surface. The $1$-skeleton of its fan $\Sigma_k$ is given by the four vectors $e_1, \pm e_2,  -e_1+ke_2$.  The $2$-dimensional cones are spanned by the consecutive rays in the natural counterclockwise ordering. The zero section $B$, the infinity section $E$, and the fiber $F$ have intersections  
\[
B^2=k,\ \ E^2=-k, \ \ BF=EF=1, \ \ \mbox{and} \ F^2= BE = 0.
\] 
The Picard group of $\mathbb{F}_k$ is isomorphic to $\ZZ\times \ZZ$ generated by the classes of $B$ and $F$. In particular, we have $E = B-kF$. A curve in $\mathbb{F}_k$ has \textbf{bidegree $(a,b)$} if its class is $aB+bF$. The polygon depicted in Figure \ref{fig-Hirz} defines $\mathbb{F}_k$ as a projective toric surface polarized by an $(a,b)$ curve. 

\begin{figure}[h!]
\input{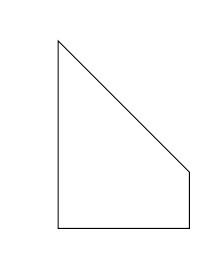_t}
\caption{The polygon defining the Hirzebruch surface $\mathbb{F}_k$ as a toric surface \rc{embedded in projective space} with hyperplane section the class of a curve of bidegree $(a,b)$. The vertical sides corresponds to the sections $B$ (left) and $E$ (right).}\label{fig-Hirz}
\end{figure}

We study the virtual enumerative invariants of curves in Hirzebruch surfaces that have prescribed special contact orders with the zero and infinity sections, \rc{and} generic intersection with the invariant fibers. This numerical data is encoded in terms of the \textit{Newton fan}. 

\begin{definition}\label{def-newtonfan}
A \textbf{Newton fan} is a sequence $\delta=\{v_1,\ldots,v_k\}$ of vectors $v_i\in \ZZ^2$ satisfying $$\sum_{i=1}^k v_i=0.$$ 
If $v_i = (v_{i1},v_{i2})$, then the positive integer $w_i=\gcd(v_{i1}, v_{i2})$ (resp. the vector
$\frac{1}{w_i}v_i$) is called the \textbf{expansion factor} (resp. the
\textbf{primitive direction}) of $v_i$.
We use the notation $$\delta=\{v_1^{m_1},\ldots, v_k^{m_k}\}$$ to indicate
that the vector $v_i$ appears $m_i$ times in $\delta$.
\end{definition}

For a Newton fan $\delta$, one can
construct a polarized toric surface, identified by the \textbf{dual polygon} $\Pi_\delta$ in $\RR^2$, in the following
way: for each primitive integer direction $(\alpha,\beta)$ in $\delta$,
we consider the vector $w(-\beta,\alpha)$, where $w$ is the sum of the expansion factors of all vectors in $\delta$ with primitive integer direction
$(\alpha,\beta)$. Up to translation,
$\Pi_\delta$ is the unique (convex, positively oriented) polygon whose
oriented edges are exactly the vectors $w(-\beta,\alpha)$.



\begin{notation}\textit{(Discrete data)}\label{not-delta}
Fix a Hirzebruch surface $\mathbb{F}_k$. The following discrete conditions govern the enumerative geometric problems we study throughout the paper:
\begin{itemize}
	\item A positive integer $n$;
	\item Non-negative integers $g, a, k_1,\ldots,k_n, n_1,n_2$ ;
	\item A vector ${\underline{\phi}} = (\varphi_1, \ldots, \varphi_{n_1}) \in (\ZZ \setminus 0)^{n_1}$;
	\item A vector ${\underline{\mu}} = (\mu_1, \ldots, \mu_{n_2}) \in (\ZZ \setminus 0)^{n_2}$.
	\item We assume that ${\underline{\phi}}$ and ${\underline{\mu}}$ are non-decreasing sequences.
	\item We denote by $({\underline{\phi}}^+,{\underline{\mu}}^+)$ the positive entries of $({\underline{\phi}},{\underline{\mu}})$, and by $({\underline{\phi}}^-,{\underline{\mu}}^-)$ the negative ones.
\end{itemize}
Further, the following two equations must be satisfied:
\begin{eqnarray}
\sum_{i=1}^{n_1} \varphi_i+\sum_{i=1}^{n_2} \mu_i+ka=0; \label{curveclass}\\
n_2+2a+g-1=n+\sum_{j=1}^n k_j. \label{dimension}
\end{eqnarray}
\end{notation}

\begin{figure}
\input{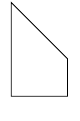_t}
\caption{A visualization of the discrete data from Notation \ref{not-delta}: $({\underline{\phi}}^-,{\underline{\mu}}^-)$ are the contact orders of (fixed and non-fixed) points on the section $B$,  $({\underline{\phi}}^+,{\underline{\mu}}^+)$ on the section $E$ , $a$ the number of intersection points with a fiber. $n$ is the number of marked points, $g$ is the genus and the $k_i$ are the powers for the descendant we impose at the point $p_i$.}\label{fig-discretedata}
\end{figure}

\subsection{Logarithmic invariants}\label{sec:vei} The first enumerative geometric problem we introduce is stationary, descendant, logarithmic Gromov--Witten invariants of $\mathbb{F}_k$, which morally count curves in $\mathbb{F}_k$ with prescribed tangency conditions along the boundary, and 
satisfying some further geometric constraints, called {\bf descendant insertions} (see Section \ref{sec:vei}),  at a number of fixed points in the interior of the surface.
In this context, $g$ is the arithmetic genus of the curves being counted, $n$ is the number of ordinary marked points on the curves, and the $k_i$ are the degrees of the descendant insertions at each point. The sequences $({\underline{\phi}},{\underline{\mu}})$ identify   a curve class  in  $H_2(\mathbb{F}_k, \ZZ)$, as well as the required tangency with the toric boundary, as we now explain. 

The tuple $({\underline{\phi}},{\underline{\mu}})$ determines the curve class 
\begin{equation}\beta = aB + \left(\sum_{\varphi_i \in {{\underline{\phi}}^+}} \varphi_i +\sum_{\mu_i \in {{\underline{\mu}}^+} }\mu_i\right)F.
\end{equation}
The compatibility condition \eqref{curveclass} ensures that $\beta$ is an effective, integral curve class in $H_2(\mathbb{F}_k,\ZZ)$.

The Newton fan 
\begin{equation}
\delta_{({\underline{\phi}},\underline{\mu})}:=\{(0,-1)^a, (k,1)^a, \varphi_1\cdot (1,0), \ldots,\varphi_{n_1}\cdot (1,0), \mu_1\cdot (1,0), \ldots, \mu_{n_2}\cdot (1,0)\}.
\end{equation}
encodes contact orders a curve may have with the toric boundary of $\mathbb{F}_k$. Such a curve is necessarily of class $\beta$.

We count curves with contact orders $|\varphi_i|$ for $\varphi_i<0$ (resp. $\varphi_i>0$) with the zero (resp. infinity) section at fixed points, and contact orders $|\mu_i|$ for $\mu_i<0$ (resp. $\mu_i>0$) with the zero (resp. infinity) section at arbitrary points.

Logarithmic stable maps and logarithmic Gromov--Witten invariants were developed in ~\cite{AC11,Che10,GS13}. 
There is a moduli stack with logarithmic structure
\[
\overline{M}^{\mathsf{log}}_{g,n+n_1+n_2}(\mathbb{F}_k, \delta_{({\underline{\phi}},\underline{\mu})}),
\]
and a map from a logarithmic scheme $S$ to the moduli stack is equivalent to a diagram 
\[
\begin{tikzcd}
\mathscr C \arrow{d}\arrow{r}{f} & \mathbb F_k, \\
S, 
\end{tikzcd}
\]
where $\mathscr C$ is a family of connected marked genus $g$ nodal logarithmic curves and $f$ is a map of logarithmic schemes, whose underlying map is stable in the usual sense. The contact orders with the toric boundary are specified by the Newton fan $\delta_{({\underline{\phi}},\underline{\mu})}$. As a matter of convention, we mark the points of contact with the zero and infinity sections, and do not mark the points of contact with the torus-invariant fibers, where the behavior requested is transverse. 

This moduli space is a proper Deligne-Mumford stack equipped with a virtual fundamental class which we denote $[1]^\mathsf{log}$ in degree $(g-1) +2a +n+n_1+n_2$. For each of the first $n$ marked points, which carry trivial contact orders, there are evaluation morphisms
\[
ev_i: \overline{M}^{\mathsf{log}}_{g,n+n_1+n_2}(\mathbb{F}_k, \delta_{({\underline{\phi}},\underline{\mu})})\to \mathbb F_k
\]
The points marking the contact points with the zero and infinity sections give rise to evaluation morphisms 
\[
\widehat{ev_i}: \overline{M}^{\mathsf{log}}_{g,n+n_1+n_2}(\mathbb{F}_k, \delta_{({\underline{\phi}},\underline{\mu})})\to \PP^1.
\] 
Here, the target $\PP^1$ is the the zero section $B$ for negative entries of $\underline{\phi}$ or $\underline{\mu}$, and the infinity section $E$ for positive entries.

For each of the first $n$ marks (i.e. those with trivial contact order) there is a cotangent line bundle, whose first Chern class is denoted $\psi_i$.

\begin{definition}\label{def-logdescGWI}
Fix a Hirzebruch surface $\mathbb{F}_k$ and discrete data as in Notation \ref{not-delta}. 

The \textbf{stationary descendant log Gromov--Witten} invariant is defined as the following intersection number on $\overline{M}^{\mathsf{log}}_{g,n+n_1+n_2}(\mathbb{F}_k, \delta_{({\underline{\phi}},\underline{\mu})})$:
\begin{equation}\label{eq-lgw}
\langle ({\underline{\phi}}^-,{\underline{\mu}}^-)| \tau_{k_1}(pt)\ldots\tau_{k_n}(pt)|({\underline{\phi}}^+,{\underline{\mu}}^+)\rangle^\mathsf{log}_{g}=
\int_{[1]^\mathsf{log}} \prod_{j=1}^n \psi_j^{k_j}ev_j^\ast([pt])\prod_{i=n+1}^{n+n_1}\widehat{ev}_i^\ast([pt])
\end{equation}

Condition $\eqref{dimension}$ comes from equating the expected dimension of the moduli space with the codimension of the intersection cycle, and hence it is a necessary condition for Equation \eqref{eq-lgw} to be non-zero. 

%
\end{definition}

\subsection{Relative invariants}\label{sec: relative-gwi} Closely related to the logarithmic invariants studied above are \textit{relative} Gromov--Witten invariants. Let $\mathbb F_k$ continue to denote the Hirzebruch surface, now considered as a pair $(\mathbb F_k, \rc{B+E})$, \rc{ the divisor consisting of the disjoint union of the zero and infinity sections.} Fix discrete data  as in Notation~\ref{not-delta}. There is a moduli space
\[
\overline{M}^{\mathsf{rel}}_{g,n+n_1+n_2}(\mathbb{F}_k, \delta_{({\underline{\phi}},\underline{\mu})}),
\]
parameterizing families of maps to expansions
\[
\mathscr C\to S_1\cup \cdots \cup S_m \to \mathbb F_k,
\]
where each $S_i$ is a copy of the Hirzebruch surface $\mathbb F_k$, where the zero section of $S_i$ is glued to the infinity section of $S_{i+1}$. As before, the curve $\mathscr C$ carries $n+n_1+n_2$ markings, and the contact orders  \rc{at fixed points of the zero section of $S_1$ (resp. infinity section of $S_m$)}  are given by $|\varphi_i|$ for $\varphi_i<0$ (resp. $\varphi_i>0$), and \rc{at arbitrary points of the zero section of $S_1$ (resp. infinity section of $S_m$) are specified to be $|\mu_i|$ for $\mu_i<0$ (resp. $\mu_i>0$)}. See~\cite{GV05,Li02} for additional details on maps to expansions.  

Both the relative and logarithmic invariants are virtual counts \rc{for} the same enumerative problem. The main difference between the \rc{logarithmic and relative}  setups is that contact orders are not prescribed with the \rc{torus invariant} fibers in the \rc{latter}. The two theories are closely related, since in the logarithmic case, non-transverse contact orders are only imposed along the invariant sections of the surface. 

\begin{remark}
In the remainder of this paper, the relative and logarithmic theories will not interact, except in that they both relate to floor diagrams. However, a referee has shared with us an outline for explaining why the two invariants agree, which we include gratefully. First we note that for stationary descendant invariants, an application of the logarithmic decomposition formula~\cite{ACGS} and combinatorial considerations reduce the problem to the case when the toric surface is $\mathbb P^1\times \mathbb P^1$ and the invariants considered are of genus zero. The statement of~\cite[Theorem~5.1]{MR16} and the arguments in its proof then allow the invariant fibers to be removed from the logarithmic structure.
\end{remark}

There is once again a virtual fundamental class in the homology of $
\overline{M}^{\mathsf{rel}}_{g,n+n_1+n_2}(\mathbb{F}_k \delta_{({\underline{\phi}},\underline{\mu})}),$ in degree $(g-1)+2a+n_1+n_2+n$. The spaces of relative stable maps come equipped with evaluation morphisms. The following definition is analogous to Definition~\ref{def-logdescGWI}.

\begin{definition}\label{def-reldescGWI}
The \textbf{stationary descendant relative Gromov--Witten invariant} is defined by
\begin{equation}\label{eq-gw}
\langle ({\underline{\phi}}^-,{\underline{\mu}}^-)| \tau_{k_1}(pt)\ldots\tau_{k_n}(pt)|({\underline{\phi}}^+,{\underline{\mu}}^+)\rangle^{\mathsf{rel}}_{g}=
\int_{[1]^\mathsf{rel}} \prod_{j=1}^n \psi_j^{k_j}ev_j^\ast([pt])\prod_{i=n+1}^{n+n_1}\widehat{ev}_i^\ast([pt]).
\end{equation}
\end{definition}


\section{Tropical descendants}\label{sec-tropdesc}

\subsection{Tropical preliminaries} An \textbf{(abstract) tropical curve} is a connected metric graph $\Gamma$ with unbounded rays or ``ends'' and a genus function $g:\Gamma\rightarrow \NN$ which is nonzero only at finitely many points. Locally around a point $p$, $\Gamma$ is homeomorphic to a star with $r$ half-rays. 
The number $r$ is called the \textbf{valence} of the point $p$ and denoted by $\val(p)$.
We require that there are only finitely many points with $\val(p)\neq 2$. We require that the set of all points of nonzero genus or valence larger than $2$ is non empty.
\rc{A finite set of points containing  (but not necessarily equal to the set of) all points of nonzero genus or valence larger than $2$ may be chosen; its elements  are called  \textbf{vertices}. }
By abuse of notation, the underlying graph with this vertex set is also denoted by $\Gamma$. 
Correspondingly, we can speak about \textbf{edges} and \textbf{flags} of $\Gamma$. A flag is a tuple $(V,e)$ of a vertex $V$ and an edge $e$ with $V\in \partial e$. It can be thought of as an element in the tangent space of $\Gamma$ at $V$, i.e.\ as a germ of an edge leaving $V$, or as a half-edge (the half of $e$ that is attached to $V$). Edges which are not ends have a finite length and are called \textbf{bounded edges}.

A \textbf{marked tropical curve} is a tropical curve such that some of its ends are labeled. An isomorphism of a tropical curve is a homeomorphism respecting the metric, the markings of ends, and the genus function. The \textbf{genus} of a tropical curve is the first Betti number $b^1(\Gamma)$ plus the genera of all vertices. A curve of genus $0$ is called \textbf{rational}. 

The \textbf{combinatorial type} of a tropical curve is obtained by dropping the information on the metric. 

Let $\Sigma$ be a polyhedral decomposition of $\RR^2$. 

\begin{definition}
A \textbf{tropical stable map to $\Sigma$} is a tuple $(\Gamma,f)$ where $\Gamma$ is a marked abstract tropical curve and $f:\Gamma\to \Sigma$ is a piecewise integer-affine map of polyhedral complexes satisfying:
\begin{itemize}
\item On each edge $e$ of $\Gamma$, $f$ is of the form $$t\mapsto a+t\cdot v \mbox{ with } v\in \ZZ^2,$$ where we parametrize $e$ as an interval of size the length $l(e)$ of $e$. The vector $v$, called the \textbf{direction}, arising in this equation is defined up to sign, depending on the starting vertex of the parametrization of the edge. We will sometimes speak of the direction of a flag $v(V,e)$. If $e$ is an end we use the notation $v(e)$ for the direction of its unique flag.
\item The \textbf{balancing condition} holds at every vertex, i.e.\ 
$$\sum_{e \in \partial V} v(V,e)=0.$$
\item The \textbf{stability condition} holds, i.e. for every $2$-valent vertex $v$ of $\Gamma$, the star of $v$ is not contained in the relative interior of any single cone of $\Sigma$.
\end{itemize}
\end{definition}
For an edge with direction $v=(v_1,v_2) \in \ZZ^2$, 
we call $w=\gcd(v_1,v_2)$ the \textbf{expansion factor} and $\frac{1}{w}\cdot v$ the \textbf{primitive direction} of $e$.

An isomorphism of tropical stable maps is an isomorphism of the underlying tropical curves respecting the map. The \textbf{degree} of a tropical stable map is the Newton fan given as the multiset of directions of its ends.
The \textbf{combinatorial type} of a tropical stable map is the data obtained when dropping the metric of the underlying graph. More explicitly, it consists of the data of a finite graph $\Gamma$, and (1) for each vertex $v$ of $\Gamma$, the cone $\sigma_v$ of $\Sigma$ to which this vertex maps, and (2) for each edge $e$ of $\Gamma$, the expansion factor and primitive direction of $e$.

Note that in practice, the precise polyhedral decomposition plays a limited role, and we will often drop this from the discussion, simply referring to the maps by the notation $[\Gamma\to \RR^2]$. 

\begin{convention}
We consider tropical stable maps to Hirzebruch surfaces, i.e.\ the degree is a Newton fan dual to the polygons of Figure \ref{fig-Hirz}. Furthermore, we require the vertical and diagonal ends to be non-marked and of expansion factor $1$. The horizontal ends can have any expansion factor, and are marked.
\end{convention}

In what follows, we fix conditions for tropical stable maps --- the degree, the genus, point conditions, high valency (descendant) conditions, and end conditions --- and then count tropical stable maps satisfying the conditions, with multiplicity. We consider degrees containing integer multiples of $(1,0)$. An end whose direction vector is a multiple of $(1,0)$ is mapped to a line segment of the form $\{(a,b)+t\cdot (\pm 1,0)\}$, where $(a,b)\in \RR^2$. The unique $b$ appearing here is the \textbf{$y$-coordinate} of the respective end. Our end conditions fix some of the $y$-coordinates of ends.

\begin{definition}\label{def-tropdescGWI}
Fix discrete invariants as in Notation \ref{not-delta}.
Let $$\Delta=\delta_{({\underline{\phi}},{\underline{\mu}})}\cup \{0^n\}$$ identify a degree for tropical stable maps. 
Fix $n$ points $p_1,\ldots,p_n\in \RR^2$ in general position, and two sets $E_0$ and $E_\infty$ of pairwise \rc{distinct} real numbers together with bijections $E_0 \to \{\varphi_i|\varphi_i<0\}$ (resp.\ $E_\infty \to \{\varphi_i|\varphi_i>0\}$).

The \textbf{tropical descendant Gromov--Witten invariant} $$\langle ({\underline{\phi}}^-,{\underline{\mu}}^-)| \tau_{k_1}(p_1)\ldots\tau_{k_n}(p_n)|({\underline{\phi}}^+,{\underline{\mu}}^+)\rangle_{g}^{\trop}$$ is the weighted number of marked tropical stable maps $(\Gamma,f)$ of degree $\Delta$ and genus $g$ satisfying:
\begin{itemize}
 \item For $j=1, \ldots, n$, the marked end $j$ is contracted to the point $p_j\in \RR^2$.
 \item The end $j$ is adjacent to a vertex $V$ in $\Gamma$ of valence $\val(V)= k_j+3-g(V)$.
 \item $E_0$ and $E_\infty$ are the $y$-coordinates of ends marked by the set $\phi$.
\end{itemize}
Each such tropical stable map is counted with \textbf{multiplicity} $\frac{1}{\mathrm{Aut}(f)}m_{(\Gamma,f)}$, to be defined in Definition \ref{def: virtual-multiplicity}.
\end{definition}


\subsection{Superabundance and rigid curves}  \rc{The set of tropical stable maps of a fixed combinatorial  type} can be parametrized by a polyhedral cone in a real vector space, see~\cite{GS13,NS06}, as well as the closely related~\cite{GM051, GM053, Mi07}. The \textbf{expected dimension} of the cone associated to the type of a map $(\Gamma,f)$ is 
\[
\#\{\mbox{ends}\}+b^1(\Gamma)-1-\sum k_i-\sum_V (\val(V)-3) = \#\{\mbox{bounded edges}\}-2b^1(\Gamma)+2.
\]
When a combinatorial type has this expected dimension, it is said to be \textbf{non-superabundant}. 
 In superabundant cases, there may be nontrivial families even when the expected dimension is zero, so we \rc{introduce the}  notion of rigidity to reduce to a finite combinatorial count.


%

\begin{definition}
Choose general points $p_1,\ldots, p_n\in \RR^2$, a degree, genus, incidence, and descendant constraints defining a tropical descendant Gromov--Witten invariant. Let $(\Gamma,f)$ be a tropical stable map satisfying these chosen constraints. The map $(\Gamma,f)$ is said to be \textbf{rigid} if $(\Gamma,f)$ is not contained in any nontrivial family of tropical curves having the same combinatorial type. 
\end{definition}

The following result follows from a simple adaptation of the proof of~\cite[Lemma 4.20]{Mi03}.

\begin{lemma}\label{lem-gammaminuspoint} Let $(\Gamma,f)$ be a rigid stable map satisfying the conditions of Definition \ref{def-tropdescGWI}. Then every connected component of $\Gamma$ minus the marked ends is rational and contains exactly one \rc{non-fixed} end.
\end{lemma}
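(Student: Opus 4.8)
The statement is a tropical dimension-count argument in the spirit of Mikhalkin's proof that rigid tropical curves satisfying generic point conditions are ``simple'' (immersions of trivalent rational curves away from the point constraints). The plan is to work with the combinatorial type of $(\Gamma,f)$ and use the formula for the expected dimension of the associated cone, together with the rigidity hypothesis, to force each component of $\Gamma \setminus \{\text{marked ends}\}$ to be a tree with a single free end.

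First I would recall that rigidity means the cone of tropical stable maps of the given combinatorial type, intersected with the locus satisfying the imposed constraints, is a single point; equivalently, the constraints cut the moduli cone down to dimension $0$. I would then set up the deformation-theoretic bookkeeping: the moduli cone has dimension equal to $\#\{\text{bounded edges}\} + \#\{\text{vertices mapping to the interior}\}\cdot(\text{positional freedom}) $ minus relations, but it is cleaner to use the ``expected dimension'' formula quoted in the excerpt, namely $\#\{\text{ends}\} + b^1(\Gamma) - 1 - \sum k_i - \sum_V(\val(V)-3)$, and to account separately for superabundant excess. The $n$ point conditions each impose $2$ constraints (via the contracted marked ends $p_j$), the descendant conditions are already absorbed into the valence bookkeeping $\val(V) = k_j + 3 - g(V)$, and the $y$-coordinate end conditions impose one constraint each. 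Balancing these against the expected dimension being forced to zero gives an equality that constrains the global combinatorics.

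The key step is then a local-to-global argument: remove the marked ends and look at a connected component $\Gamma'$ of the complement. If $\Gamma'$ had positive first Betti number, or had a vertex of positive genus, or contained two or more non-fixed ends, I claim one could produce a nontrivial deformation preserving the combinatorial type and all constraints, contradicting rigidity. Concretely: a loop in $\Gamma'$ gives (generically, in the non-superabundant situation, and after a perturbation argument in the superabundant one) a one-parameter family of edge-length changes; a positive-genus vertex contributes to $b^1$ in the same way; and if $\Gamma'$ had two non-fixed ends, the lengths of the bounded edges of $\Gamma'$ would not be pinned down — the component could ``slide'' since nothing anchors it except its attachment to marked ends (which are contracted to fixed points) and its fixed ends (of which there are too few). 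Counting: a rational tree with one free end and some number of marked-end attachments and fixed ends has exactly the right number of bounded edges to be rigidified by the point and end conditions; one more free end or one more independent cycle creates a free modulus. This is exactly the content of \cite[Lemma 4.20]{Mi03}, and the adaptation amounts to checking that the descendant (higher-valence) vertices and the Hirzebruch-surface degree do not change the count — the higher valence is compensated exactly by the descendant exponents $k_j$ in the dimension formula, so the balance is unchanged.

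The main obstacle is handling superabundance cleanly: the quoted dimension formula is only a lower bound in general, and in superabundant types there can be nontrivial families even when the expected dimension is zero, which is precisely why the notion of rigidity was introduced. So the argument cannot simply equate expected dimension with actual dimension. The way around this, following Mikhalkin, is to argue by contradiction directly with the family: assuming a component $\Gamma'$ with a cycle, a positive-genus vertex, or two free ends, one exhibits an \emph{explicit} deformation — lengthening/shortening a spanning set of edges of $\Gamma'$ while propagating the change so that vertex positions, marked-point images, and fixed $y$-coordinates are all preserved — and checks the balancing condition still holds along the family. This explicit family exists regardless of superabundance, so it contradicts rigidity. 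I would close by noting that once each component is a rational tree with at most one free end, the balancing condition and the fact that the total degree $\Delta$ has all its non-contracted ends accounted for forces ``exactly one'' rather than ``at most one'' free end per component, since a tree with no unbounded non-marked direction cannot satisfy balancing unless it is entirely contracted, which is excluded.
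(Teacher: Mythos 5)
Your proposal takes essentially the same route as the paper, whose entire proof of this lemma is the one-line citation of~\cite[Lemma 4.20]{Mi03}; your sketch of the deformation/sliding argument and of the places where the adaptation needs rechecking (descendant-induced valence is absorbed into the $\val(V)=k_j+3-g(V)$ bookkeeping, Hirzebruch degree only changes the set of end directions) is a faithful unpacking of that reference. The only small wrinkle is the closing: balancing alone rules out a component with no unbounded non-marked edges at all (it must be entirely contracted), but a component having some fixed ends yet no non-fixed end is perfectly consistent with balancing and is instead excluded by the general-position/over-constraint count that you gesture at in the ``Counting'' sentence but do not actually invoke at the end.
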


\subsection{The virtual multiplicity of a rigid tropical curve}\label{sec: virtual-multiplicity} Let 
\[
f:\Gamma\to\RR^2
\]
be a rigid tropical stable map contributing to a tropical descendant Gromov--Witten invariant. Assume that all vertices of $\Gamma$ map to integer points in $\mathbb R^2$. Note that since edge expansion factors are rational, this is always possible after a translation and dilation. We define the multiplicity of $(\Gamma,f)$. This is done in two steps. Following~\cite[Section~6.3.1]{ACGS} let $\mathbb F_k^\dagger$ denote the product $\mathbb F_k\times\spec(\NN\to \CC)$, where the latter is the standard logarithmic point. {Consider} a minimal logarithmic stable map
\[
\begin{tikzcd}
C\arrow{r}\arrow{d} & \mathbb F^\dagger_k \arrow{d} \\
S \arrow{r} & \spec(\mathbb N\to \CC).
\end{tikzcd}
\]
A logarithmic curve over a standard logarithmic point determines a tropical curve $\Gamma_C$. Specifically, we take the underlying graph of $\Gamma_C$ to be the dual graph of $C$. Attached to each node $q$ of $C$ is a deformation parameter $\delta_q\in \NN$. We take the length of the corresponding edge $e_q$ to be the value $\delta_q$. As explained in~\cite[Section 2]{ACGS}, by dualizing the maps of monoids underlying the map of logarithmic schemes $C\to \mathbb F_k^\dagger$, we obtain a tropical map
\[
\Gamma_C\to \RR^2.
\]

Fix $(\Gamma,f)$ a tropical stable map.  Consider logarithmic stable maps $C\to \mathbb F_k^\dagger$, equipped with an edge contraction from the tropicalization $\Gamma_C\to \RR^2$ to $(\Gamma,f)$. That is, the logarithmic maps are at least as degenerate as dictated by the fixed tropical map $(\Gamma,f)$. However, the marking by $(\Gamma,f)$ is an additional rigidifying datum. In~\cite[Section 4]{ACGS}, it is shown that such maps are parameterized by an algebraic stack with a finite map
\[
\overline M_{(\Gamma,f)}\to \overline{M}^{\mathsf{log}}_{g,n+n_1+n_2}(\mathbb F_k^\dagger,\delta_{\underline\phi,\underline\mu}).
\]

We may now define the virtual multiplicity of $(\Gamma,f)$. For simplicity, we only treat the case without fixed points on the boundary. It is a notational exercise to extend this to the general case. Given a standard logarithmic point 
\[
p^\dagger: \spec(\NN\to \CC)\to \mathbb F_k^\dagger,
\] 
there is an associated tropical point in $\RR^2$. Let $U_\sigma$ be the affine invariant open to which the underlying scheme theoretic point maps. Let $S_\sigma$ be the associated dual cone of characters that extend to regular functions on $U_\sigma$. By the definition of a logarithmic morphism, there is an induced map
\[
S_\sigma\to \NN,
\]
which gives an integral point $p\in \RR^2$. We refer to $p^\dagger$ as a logarithmic lifting of $p$. 

Fix discrete data. Choose general points $p_1,\ldots, p_n\in \RR^2$, and let $(\Gamma,f)$ be a rigid tropical stable map passing through the points $p_i$ contributing to a tropical descendant Gromov--Witten invariant. Choose a logarithmic lifting $\underline p^\dagger =(p_1^\dagger,\ldots, p_n^\dagger)$ of $(p_1,\ldots, p_n)$, which as above, is an $n$-tuple of logarithmic sections of $\mathbb F_k^\dagger$. Evaluation at the $n$ markings of the curve gives rise to a morphism
\[
ev: \overline M_{(\Gamma,f)}\to (\mathbb F_k^\dagger)^n.
\]
We first create a moduli space of logarithmic curves that pass through these logarithmic points by choosing evaluations:
\[
\overline M_{(\Gamma,f)}(\underline p^\dagger) := \overline M_{(\Gamma,f)}\times_{(\mathbb F_k^\dagger)^n} \underline p^\dagger.
\] 
As explained in~\cite[Section~6.3.2]{ACGS}, this moduli space comes equipped with a virtual fundamental class. This moduli space will still have positive virtual dimension, so we cut it down using the descendant classes. 

\begin{definition}\label{def: virtual-multiplicity}
For a rigid tropical curve $(\Gamma,f)$ contributing to a tropical descendant Gromov--Witten invariant, define its \textbf{virtual multiplicity} to be
\[
m_{(\Gamma,f)}:= \int_{[\overline M_{(\Gamma,f)}(\underline p^\dagger)]^{\mathrm{vir}}} \prod_{j=1}^n \psi_j^{k_j}.
\]
A non-rigid tropical curve $(\Gamma,f)$ is defined to have multiplicity $0$.
\end{definition}

A requisite for the above definition is the fact that the multiplicity defined above is independent of the choice of logarithmic lifting. This follows from the logarithmic deformation invariance of the virtual fundamental class~\cite[Theorem~0.3]{GS13} and~\cite[Appendix~A]{MR16}. This completes the definition of the tropical descendant Gromov--Witten invariant. 

\subsection{When does the virtual multiplicity collapse?}\label{rem-gen}
The multiplicity in Definition~\ref{def: virtual-multiplicity} is difficult to compute in practice, which limits the utility of the logarithmic decomposition. There has been encouraging progress in providing explicit calculation schemes for these multiplicities, see for instance~\cite{MR19}. In Section~\ref{troflo}, we  observe that after specializing the point conditions to horizontally stretched position (leading to floor decomposed tropical maps), it is indeed possible to write the multiplicity $m_{(\Gamma,f)}$ in terms of local multiplicities attached to vertices, yielding a \textit{degeneration} formula, as opposed to only a decomposition. We point out two previous instances in which this was already known~\cite{MR08,Mi03}.

\begin{enumerate}
\item \textbf{If all $\psi$-powers are $0$, i.e.\ $k_1=\ldots=k_n=0$:} the valency condition implies that the vertex adjacent to end $i$ is trivalent and of genus $0$. Since the end $i$ is contracted, the image of a neighbourhood of this vertex just looks like an edge passing through $p_i$. We thus count plane tropical curves passing through the points (and possibly with some fixed $y$-coordinates for the ends). An example can be found in Example \ref{ex-tropstablemap}, see Figure \ref{fig-exki0}. They are counted with multiplicity equal to the product of the normalized areas of the  triangles in the dual subdivision (notice that all vertices are trivalent and of genus $0$ for dimension reasons).
In case of fixed $y$-coordinates, the product above has to be multiplied in addition with $\prod_e\frac{1}{w(e)}$, where the product goes over all fixed ends $e$ and $w(e)$ denotes their expansion factor \cite{GM052}. That all local Gromov--Witten invariants are $1$ follows from the correspondence theorem proved in~\cite{NS06,R15b}. 
\item \textbf{If the genus $g=0$:}  in \cite{MR08}, \rc{tropical, rational, stationary, descendant invariants are studied. The appropriate tropical maps }
 are counted with multiplicity equal to the product of the normalized areas of the triangles (dual to non-marked vertices) in the dual subdivision as above, with a factor of $\prod_e\frac{1}{w(e)}$ for fixed ends, see~\cite{BGM10}. The correspondence theorem for such invariants is proved in the papers~\cite{Gro15,R15b} and using different methods in~\cite{MR16}. 
 \end{enumerate}

\begin{example}\label{ex-tropstablemap}
We show two examples. The point conditions $p_i\in \RR^2$ are chosen to be in \textbf{horizontally stretched} position, see~\cite[Definition 3.1]{FM09}.
\begin{enumerate}
\item Let $k=1$, $({\underline{\phi}})=(-2,1)$, $({\underline{\mu}})=(-2,-1,1)$. Then $\sum \varphi_i+\sum \mu_i+3\cdot 1=0$, so $a=3$. Let $g=0$, $n=8$, and $k_1=\ldots=k_8=0$. Since $n_2=3$ and $3+2\cdot 3-1=8$, this choice satisfies the condition of Definition \ref{def-tropdescGWI}. Figure \ref{fig-exki0} shows the image of a tropical stable map contributing to $\langle((-2),(-2,-1)) |\tau_{0}(p_1)\ldots\tau_{0}(p_8)|((1),(1))\rangle_{0}^{\trop}$ with multiplicity $72$ (see Section~\ref{rem-gen} (1)). The Figure reflects the image of the map, decorated by some data of the parametrization --- for that reason, the picture indicates a crossing instead of a $4$-valent vertex. We draw the fixed $y$-coordinates as points at the end of an end. Expansion factors bigger one are written next to the edges, so that the direction is visible from the picture.

\begin{figure}
\input{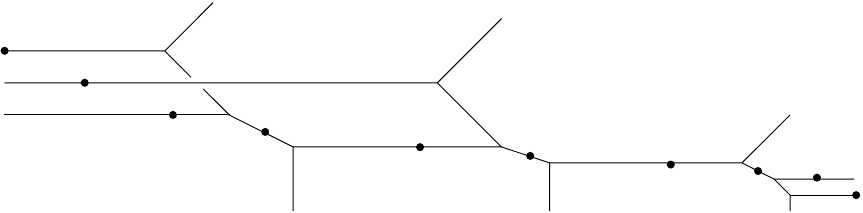_t}
\caption{A tropical stable map to $\mathbb{F}_1$ contributing to $\langle((-2),(-2,-1)) |\tau_{0}(p_1)\ldots\tau_{0}(p_8)|((1),(1))\rangle_{0}^{\trop}$ as in Example \ref{ex-tropstablemap}.}\label{fig-exki0}
\end{figure}

\item As before, let $k=1$, $({\underline{\phi}})=(-2,1)$, $({\underline{\mu}})=(-2,-1,1)$, $a=3$ and $g=0$. Let $n=4$ and $k_1=0$, $k_2=1$, $k_3=3$ and $k_4=0$. Then $3+2\cdot 3-1=4+1+3$, so the condition of Definition \ref{def-tropdescGWI} is satisfied for this choice. Figure \ref{fig-ex} shows a tropical stable map contributing to $\langle ((-2),(-2,-1)) |\tau_{0}(p_1)\tau_1(p_2)\tau_3(p_3)\tau_0(p_4)|((1),(1))\rangle_{0}^{\trop}$ with multiplicity $4$ (see Remark \ref{rem-gen} (2) above). 

\begin{figure}
\input{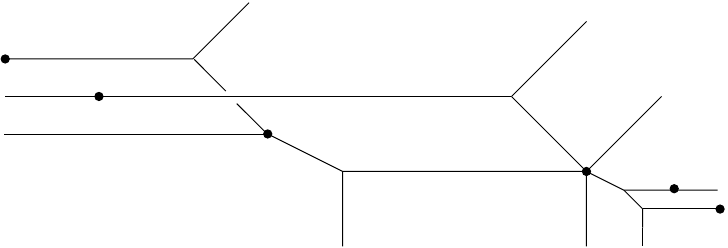_t}
\caption{A tropical stable map to $\mathbb{F}_1$ contributing to the invariant $\langle ((-2),(-2,-1)) |\tau_{0}(p_1)\tau_1(p_2)\tau_3(p_3)\tau_0(p_4)|((1),(1))\rangle_{0}^{\trop}$ as in Example \ref{ex-tropstablemap}.}\label{fig-ex}
\end{figure}

\end{enumerate}
\end{example}

\begin{remark}
The image $f(\Gamma)\subset \RR^2$ of a tropical stable map is a tropical plane curve as considered e.g.\ in \cite{Mi03, RST03}. We assume that the reader is familiar with basic concepts concerning tropical plane curves, in particular their duality to \textbf{subdivisions of the Newton polygon}. In our situation, the image of any tropical stable map contributing to the count above is dual to a subdivision of the polygon attached to the Newton fan $\delta_{({\underline{\phi}},{\underline{\mu}})}$, which defines the Hirzebruch surface $\mathbb{F}_k$ as a projective toric surface with hyperplane section the class of a curve of bidegree $(a,\sum_{i|\varphi_i>0}\varphi_i+\sum_{i|\mu_i>0}\mu_i)$. 
Figure \ref{fig-dual} shows the dual Newton subdivisions of the images of the stable maps of Example \ref{ex-tropstablemap}.

\begin{figure}
\input{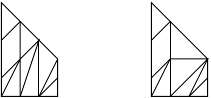_t}
\caption{The dual subdivisions of the images of the stable maps of Example \ref{ex-tropstablemap}.}\label{fig-dual}
\end{figure}

\end{remark}

\begin{theorem}[Correspondence theorem]\label{thm-corres}
Fix a Hirzebruch surface $\mathbb{F}_k$ and discrete data as in Notation \ref{not-delta}.
The tropical stationary descendant log Gromov--Witten invariant 
coincides with its algebro-geometric counterpart 
, i.e.\ we have
 $$\langle ({\underline{\phi}}^-,{\underline{\mu}}^-)|\tau_{k_1}(pt)\ldots\tau_{k_n}(pt)|({\underline{\phi}}^+,{\underline{\mu}}^+)\rangle^{\mathsf{log}}_{g}=\langle ({\underline{\phi}}^-,{\underline{\mu}}^-)|\tau_{k_1}(pt)\ldots\tau_{k_n}(pt)|({\underline{\phi}}^+,{\underline{\mu}}^+)\rangle_{g}^{\trop}$$
\end{theorem}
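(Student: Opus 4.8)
The plan is to establish the correspondence via the logarithmic decomposition formula of Abramovich--Chen--Gross--Siebert~\cite{ACGS}, applied to a toric degeneration of $\mathbb{F}_k$. First I would choose a maximal degeneration of the polarized toric surface: subdivide the dual polygon $\Pi_{\delta_{(\underline{\phi},\underline{\mu})}}$ into standard triangles (a unimodular triangulation), which induces a toric degeneration of $\mathbb{F}_k$ whose special fiber is a union of toric surfaces glued along torus-invariant curves, with dual complex the triangulated polygon. The point conditions must be degenerated compatibly: each of the $n$ interior point constraints $ev_j^\ast([pt])$ is spread out over the degeneration so that in the special fiber it becomes a logarithmic point lying in the interior of a two-dimensional stratum, and the tangency conditions along $B$ and $E$ (the $\widehat{ev}_i^\ast([pt])$ factors and the fixed-point contact orders $|\varphi_i|$) are likewise positioned on the appropriate boundary strata. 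This is exactly the data of the tropical point configuration $(p_1,\dots,p_n)$ together with $E_0, E_\infty$ appearing in Definition~\ref{def-tropdescGWI}.

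Next I would invoke the decomposition formula: the virtual class of $\overline{M}^{\mathsf{log}}_{g,n+n_1+n_2}(\mathbb{F}_k,\delta_{(\underline{\phi},\underline{\mu})})$, after this degeneration, decomposes as a sum indexed by rigid tropical stable maps $(\Gamma,f)$ of the prescribed degree, genus, valency (descendant), and end conditions, with each summand the pushforward of the virtual class of the moduli space $\overline{M}_{(\Gamma,f)}(\underline{p}^\dagger)$ of logarithmic maps marked by the tropical type. The key point is that the descendant insertions $\prod_j \psi_j^{k_j}$ and the point pullbacks are compatible with this decomposition -- the $\psi$-classes on the log moduli space restrict to the $\psi$-classes on each $\overline{M}_{(\Gamma,f)}(\underline{p}^\dagger)$, and imposing the point conditions is precisely the fiber product taken in Definition~\ref{def: virtual-multiplicity}. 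Integrating term by term then gives
\[
\langle(\underline{\phi}^-,\underline{\mu}^-)|\tau_{k_1}(pt)\cdots\tau_{k_n}(pt)|(\underline{\phi}^+,\underline{\mu}^+)\rangle^{\mathsf{log}}_g = \sum_{(\Gamma,f)} \frac{1}{|\mathrm{Aut}(f)|}\int_{[\overline{M}_{(\Gamma,f)}(\underline{p}^\dagger)]^{\mathrm{vir}}}\prod_{j=1}^n \psi_j^{k_j},
\]
where the sum ranges over rigid tropical stable maps satisfying the conditions of Definition~\ref{def-tropdescGWI}; by Lemma~\ref{lem-gammaminuspoint} only rigid types contribute, and non-rigid types contribute zero by Definition~\ref{def: virtual-multiplicity}. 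By definition of the virtual multiplicity $m_{(\Gamma,f)}$, the right-hand side is exactly $\langle(\underline{\phi}^-,\underline{\mu}^-)|\tau_{k_1}(p_1)\cdots\tau_{k_n}(p_n)|(\underline{\phi}^+,\underline{\mu}^+)\rangle^{\trop}_g$, which establishes the claimed equality. One also checks that the tropical count is independent of the chosen point configuration, which follows from deformation invariance of the logarithmic virtual class~\cite[Theorem~0.3]{GS13}, already noted after Definition~\ref{def: virtual-multiplicity}.

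The main obstacle is verifying that the descendant insertions behave correctly under the decomposition formula: the formula of~\cite{ACGS} is stated for the virtual class itself, and one must check that capping with $\prod_j \psi_j^{k_j} ev_j^\ast([pt])$ commutes with the decomposition. This requires (i) identifying the $\psi$-classes attached to the marked points with trivial contact order on the degenerate moduli space with the corresponding $\psi$-classes on the pieces -- which holds because these marked points stay away from the nodes of the source curve and from the singular locus of the target, so the cotangent line is unaffected by the gluing -- and (ii) checking that the evaluation maps at these points factor through the strata in which the tropical images of the marked ends land, so that imposing $ev_j^\ast([pt])$ is the same as the fiber product defining $\overline{M}_{(\Gamma,f)}(\underline{p}^\dagger)$. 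A secondary technical point is genericity: one must argue that for a general choice of the degenerated point configuration, only rigid tropical types occur in the decomposition and each occurs with the expected multiplicity, which is where the superabundance discussion and Lemma~\ref{lem-gammaminuspoint} enter. These are the points I would spell out in detail.
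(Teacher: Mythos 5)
Your proposal takes the same overall approach as the paper — both invoke the ACGS decomposition formula for a toric degeneration of $\mathbb{F}_k$ — but you omit the crucial structural step, which creates a genuine gap. The paper first fixes points $p_1,\ldots,p_n \in T \subset \mathbb{F}_k$ over $\spec(\CC(\!(t)\!))$ whose tropicalizations are in general position in $\RR^2$, \emph{then} enumerates the finitely many rigid tropical stable maps $(\Gamma_1,f_1),\ldots,(\Gamma_r,f_r)$ meeting the constraints, and only \emph{afterwards} chooses a polyhedral decomposition $\mathscr P$ of $\RR^2$ as a common refinement of the images $f_i^{\trop}$, with recession fan $\Sigma_k$. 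This ``curves first, degeneration second'' order is what guarantees that every contributing tropical map factors through the one-skeleton of $\mathscr P$, so that the terms appearing in the decomposition formula are exactly the marked moduli spaces $\overline{M}_{(\Gamma,f)}(\underline p^\dagger)$ of Definition~\ref{def: virtual-multiplicity}, and the sum is over precisely the tropical curves contributing to the tropical count. You reverse this: you fix a unimodular triangulation of the moment polygon $\Pi_{\delta}$ up front and then try to position the points compatibly, but nothing forces the contributing tropical curves to factor through the resulting polyhedral structure, so the identification of decomposition terms with the virtual multiplicities $m_{(\Gamma,f)}$ does not go through directly.

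A secondary but real issue is your choice of degeneration. The paper builds the toric degeneration $\mathscr X$ from the polyhedral decomposition $\mathscr P$ of $\RR^2$ \`a la Gubler~\cite{Gub13}, whose tropicalization is naturally $\RR^2$ with the given polyhedral structure; this matches the tropical stable maps of Definition~\ref{def-tropdescGWI}, which map to $\RR^2$ with unbounded ends encoding tangency. You instead subdivide the moment polygon $\Pi_{\delta_{(\underline{\phi},\underline{\mu})}}$ and take the degeneration with dual complex the triangulated (compact) polygon, the Mumford picture. These constructions are related but not interchangeable here, and you would need to explain how tropical maps to the triangulated polygon correspond to the tropical stable maps of Definition~\ref{def-tropdescGWI} before the decomposition can be read off as the tropical count. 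Your observations about the two main obstacles — compatibility of $\psi$-insertions with the decomposition, and the dimension/genericity argument forcing the valency condition at vertices carrying descendants — are correct and align with what the paper uses, so the fix is to replace the up-front unimodular triangulation with the adapted refinement $\mathscr P$ of $\RR^2$.
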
 

\begin{proof}
The proof is a consequence of the decomposition formula for logarithmic Gromov--Witten invariants, due to Abramovich, Chen, Gross, and Siebert~\cite{ACGS}. We explain the geometric setup, and how to deduce the multiplicity above from the formulation in loc. cit. We assume that there are no fixed boundary conditions to lower the burden of the notation; the general case is no more complicated.

Consider the moduli space $\overline{M}^{\mathsf{log}}_{g,n+n_1+n_2}(\mathbb{F}_k, \delta_{({\underline{\phi}},\underline{\mu})})$, and on it, the descendant cycle class given by $\psi_1^{k_1}\cdots \psi_n^{k_n}$. We compute the invariant by degenerating the point conditions, and cutting down the virtual class to
\[
\psi_1^{k_1}\cdots \psi_n^{k_n}\cap [1]^{\mathsf{log}}.
\]
Working over $\spec(\CC(\!(t)\!))$, choose points $p_1$, $\ldots$, $p_n\in T\subset \mathbb F_k$, and assume that the coordinate-wise valuations of these points in $T$, which give points $p_1^{\trop}$, $\ldots$, $p_n^{\trop}$, are in general position in $\RR^2$. Since the tropical moduli space with the prescribed discrete data has only finitely many cones, it follows that there are finitely many rigid tropical stable maps meeting the stationary constraints. Suppose $(\Gamma,f)$ is a tropical stable map with an end $p_i$ incident to a vertex $V$. Since a point $p_i$ must support the descendant class $\psi_i^{k_i}$, a dimension argument forces that the valency of $V$ is $k_V+3-g(V)$. In other words, the tropical curves contributing to the count are precisely the ones outlined in Definition~\ref{def-tropdescGWI}. 

There are finitely many tropical stable maps contributing to this family. Indeed, there are only finitely many combinatorial types of tropical stable maps by the combinatorial finiteness results outlined in~\cite{NS06,GS13}. Moreover, as a type only contributes if it meets the constraints and is rigid, there are only finitely many contributing maps. We enumerate the maps $(\Gamma_1,f_1),\ldots, (\Gamma_r,f_r)$ that contribute to the invariant 
\[
\langle ({\underline{\phi}}^-,{\underline{\mu}}^-)|\tau_{k_1}(p_1^{\trop})\ldots\tau_{k_n}(p_n^{\trop})|({\underline{\phi}}^+,{\underline{\mu}}^+)\rangle_{g}^{\trop}. 
\]
Choose a polyhedral decomposition $\mathscr P$ of $\RR^2$ such that every tropical stable map $f_i^{\trop} $factors through the one-skeleton of $\mathscr P$ and that the fan of unbounded directions of $\mathscr P$ (i.e. the recession fan) is the fan $\Sigma_k$. Note that $\mathscr P$ can always be chosen to be a common refinement of the images of $f_i^{\trop}$. The contact order conditions on the tropical maps ensure that the recession fan is $\Sigma_k$. 

The polyhedral decomposition $\mathscr P$ determines a toric degeneration $\mathscr X$ of $\mathbb F_k$, over $\spec(\CC[\![t]\!])$, see~\cite{NS06}. In~\cite[Appendix~A]{MR16}, the authors show that logarithmic Gromov--Witten invariants are constant in logarithmically smooth families. By using this deformation invariance, we may compute $\langle ({\underline{\phi}}^-,{\underline{\mu}}^-)|\tau_{k_1}(pt)\ldots\tau_{k_n}(pt)|({\underline{\phi}}^+,{\underline{\mu}}^+)\rangle^{\mathsf{log}}_{g}$ on the central fiber of this degeneration, as 
\[
ev^\star(p)\cap \psi_1^{k_1}\cdots \psi_n^{k_n}\cap [1]^{\mathsf{log}},
\]
where 
\[
ev:\overline{M}^{\mathsf{log}}_{g,n+n_1+n_2}(\mathscr X, \delta_{({\underline{\phi}},\underline{\mu})}) \to \mathscr X^n,
\]
is the product of all evaluation morphisms, and $p$ is the specialization of the point $(p_1,\ldots, p_n)$ chosen above.

By applying the decomposition formula for logarithmic Gromov--Witten invariants for point conditions~\cite[Theorem 6.3.9]{ACGS}, this invariant can be written as a sum of the invariants associated to each tropical curve in the manner described. 
\end{proof}

\section{Floor diagrams via the relative theory}\label{sec-floor}

Floor diagrams are connected, via combinatorial manipulations, both to relative descendant Gromov--Witten invariants and tropical descendant Gromov--Witten invariants. This section deals with the former. Floor diagrams naturally organize the computation of a relative descendant via the degeneration formula. A brief discussion of the connection between floor diagrams appearing here and in previous work is found in~Section \ref{workflo}.

\subsection{Floor diagrams} The enumerative geometry studied in this section is the relative descendant Gromov--Witten theory of $\mathbb F_k$, as outlined in Section~\ref{sec: relative-gwi} 

\begin{definition}\label{def-floor}
 Let $D$ be a loop-free connected graph on a linearly ordered vertex set. $D$ has two types of edges: compact edges, composed of two flags (or half-edges), adjacent to different vertices, and unbounded edges, also called ends, with only one flag. $D$ is called a \textbf{floor diagram} for $\mathbb{F}_k$ of degree $({\underline{\phi}},{\underline{\mu}})$
 if:
\begin{enumerate}
\item Three non-negative integers are assigned to each vertex $V$: $g_V$ (called the \textbf{genus} of $V$), $s_V$ (called the \textbf{size} of $V$) and $k_V$ (called the \textbf{$\psi$-power} of $V$). 
\item Each flag may be decorated with a thickening. We require that for each compact edge precisely one of its two half-edges is thickened. 
\item At each vertex $V$, $k_V+2-2s_V-g_V$ adjacent half-edges are thickened.
\item Each edge $e$ comes with an \textbf{expansion factor} $w(e)\in \NN_{>0}$.
\item At each vertex $V$, the signed sum of expansion factors of the adjacent edges (where we use negative signs for edges pointing to the left and positive signs for edges pointing to the right) equals $-k_V\cdot s_V$.
\item The sequence of expansion factors of non-thick ends (where we use negative signs for the ends pointing to the left and positive signs for the ends pointing to the right) is $({\underline{\phi}})$, and  the sequence of expansion factors of thickened ends (with the analogous sign convention) is $({\underline{\mu}})$. 
\item The ends of the graph are marked by the parts of $({\underline{\phi}},{\underline{\mu}})$.

\end{enumerate}

The \textbf{genus} of a floor diagram is defined to be the first Betti number of the graph plus the sum of the genera at all vertices.
\end{definition}

\begin{figure}[tb]
\input{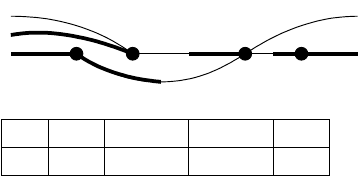_t}
\caption{An example of a floor diagram. The genus at all vertices is $0$.}\label{fig-floordiagram2a}
\end{figure}

\begin{example}
Figure \ref{fig-floordiagram2a} shows  a floor diagram for $\mathbb{F}_1$ of degree $((-2,1),(-2,-1,1))$ and genus $0$.
\end{example}

\begin{definition}\label{def-floormult}
Given  a floor diagram for $\mathbb{F}_k$, let $V$ be a vertex of genus $g_V$, size $s_V$ and with $\psi$-power $k_V$. Let $({\underline{\phi}}_V,{\underline{\mu}}_V)$ denote the expansion factors of the flags adjacent to $V$; the first sequence encodes the normal half edges, the second the thickened ones. We define the multiplicity $\mult(V)$ of $V$ to be the one-point stationary relative descendant invariant $$\mult(V)=\langle ({\underline{\phi}}^-_V,{\underline{\mu}}^-_V)|\tau_{k_V}(pt)|({\underline{\phi}}^+_V,{\underline{\mu}}^+_V)\rangle^{\mathsf{rel}}_{g_V}. $$
\end{definition}

\begin{definition}[Floor multiplicity for relative geometries]\label{def-floorcount}
Fix discrete data as in Notation \ref{not-delta}.
We  define: $$\langle ({\underline{\phi}}^-,{\underline{\mu}}^-)|\tau_{k_1}(pt)\ldots\tau_{k_n}(pt)|({\underline{\phi}}^+,{\underline{\mu}}^+)\rangle_{g}^{\floor}$$
to be the weighted count of floor diagrams $D$ for $\mathbb{F}_k$ of degree $({\underline{\phi}},{\underline{\mu}})$ and genus $g$, with $n$ vertices with $\psi$-powers $k_1,\ldots,k_n$, such that $a$ equals the sum of all sizes of vertices, $a=\sum_{V=1}^n s_V$. 

Each floor diagram is counted with multiplicity $$\mult(D)=
\prod_{e \in C.E.} w(e)\cdot \prod_V \mult(V),$$
 where the first product is over the set $C.E.$ of  compact edges and $w(e)$ denotes their expansion factors; the second product ranges over all vertices $V$ and $\mult(V)$ denotes their multiplicities as in  Definition \ref{def-floormult}.
\end{definition}

\subsection{Motivation and relation to other work}\label{workflo}

For readers who are familiar with floor diagrams and their relation to tropical curves in $\mathbb{R}^2$, we insert a section discussing the special aspects of the definition we use here, and their relation to common definitions in the literature.

Floor diagrams were introduced for counts of curves in $\PP^2$ by Brugall\'e{}-Mikhalkin \cite{BM:Pn}, and further investigated by Fomin-Mikhalkin \cite{FM09}, leading to new results about node polynomials. The results were generalized to other toric surfaces, including Hirzebruch surfaces, in~\cite{AB13}.

The main observation is that by picking {\it horizontally stretched point conditions}, the images of tropical stable maps contributing to a Gromov--Witten invariant become floor decomposed: this means that the dual subdivision of the Newton polygon is {\it sliced} (i.e. a refinement of a subdivision  of the trapezoid by parallel vertical lines --- see Figure \ref{fig-slice}).
Floor diagrams are then obtained by shrinking each \textbf{floor} (i.e.\ a part of the plane tropical curve which is dual to a (Minkowski summand of a) slice in the Newton polygon)  to a white vertex. Each floor contains precisely one marked point. Further marked points lie on horizontal edges which connect floors, the so-called {\it elevators}\footnote{The bizarre nomenclature makes intuitive sense if everything is rotated by $90^\circ$.}, and are represented with black vertices. Fixed horizontal ends  are given a  (double circled) vertex, while other horizontal ends are shrunk  so that the diagram has no unbounded edges.
\begin{figure}[tb]
\input{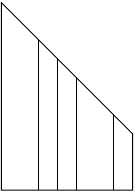_t}
\caption{The subdivision of a floor decomposed tropical curve refines the sliced Newton polygon. Each strip corresponds to a floor, and the integral width of the strip is called the size of the floor.}\label{fig-slice}
\end{figure}

\begin{example}
In Figure \ref{fig-floors1}  we revisit the tropical stable map observed in the first part of Example \ref{ex-tropstablemap}. The floors are circled by dashed lines. On the right-hand side we have the corresponding floor diagram. Following the convention in \cite{BGM10}, fixed ends terminate with a double circle, and other ends are contracted to the corresponding black vertex.
\end{example}

\begin{figure}[tb]
\begin{tabular}{cc}
     \scalebox{0.6}{\input{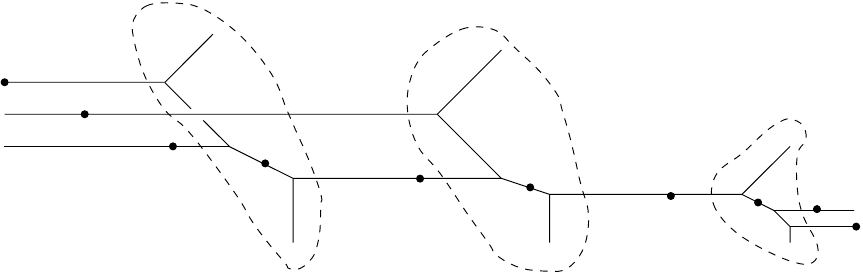_t}}  
& %
     \scalebox{0.8}{\input{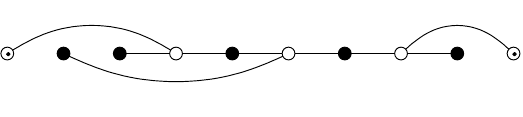_t}}  

\end{tabular}
\caption{The floors in the tropical stable map  of Example \ref{ex-tropstablemap}(1), and the corresponding floor diagram.}\label{fig-floors1}
\end{figure}
 
 For rational stationary descendant Gromov--Witten invariants, the floor diagram technique was studied by Block, Gathmann and the third author \cite{BGM10} (the diagrams are called \textbf{$\psi$-floor diagrams}). There are two main differences with respect to the primary case:
 
 \begin{itemize}
 \item descendant insertions force us to consider floor decomposed curves with floors of size larger than one. The size of a floor thus becomes part of the data of a floor diagram: each floor vertex comes with two numbers, the $\psi$-power $k_i$ of the corresponding marked point, and  the size of the floor;
 \item marked points may now be supported at a vertex of the tropical stable map, and horizontal edges incident to such a vertex are fixed by the point condition. 
This condition is encoded by thickening the corresponding half-edges in the floor diagram.  
 \end{itemize}
 
\begin{example} 
Figure \ref{fig-floors2} illustrates the second part of Example \ref{ex-tropstablemap}. Some half-edges are thickened, indicating that the corresponding edge in the tropical curve leading to this diagram is adjacent to the marked point in the floor. 
\end{example}

\begin{figure}
\begin{tabular}{cc}
     \scalebox{0.6}{\input{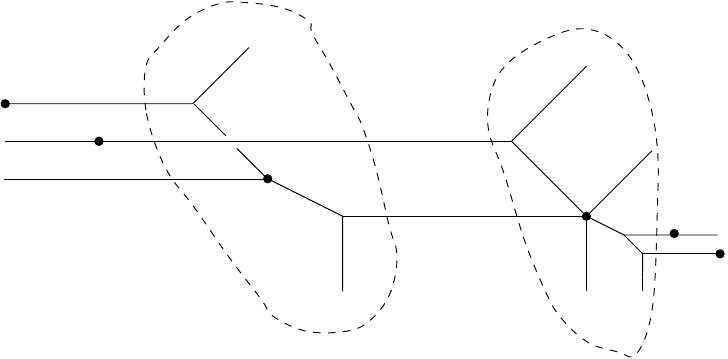_t}}  
& %
     \scalebox{0.8}{\input{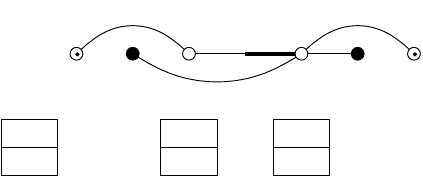_t}}  

\end{tabular}
\caption{The floors in the tropical stable map  of Example \ref{ex-tropstablemap}(2), and the corresponding floor diagram. The numbers below the white vertices indicate the $\psi$-power $k_i$ of the marked point in the floor, and the size $s_i$ of the floor.}\label{fig-floors2}
\end{figure}

The language  in \cite{BGM10} was modeled after the work by Fomin--Mikhalkin \cite{FM09}, which was motivated by a computational approach aiming at new results about node polynomials. 
Our current motivation to study floor diagrams comes from their connections to degeneration techniques and Fock space formalisms to enumerative geometry.
 Hence our definitions introduce the following modifications with respect to \cite{FM09,BGM10}:
\begin{enumerate} 
\item The distinction between floors and marked points on elevators is not needed anymore (to the contrary, it only complicates the combinatorics and clouds the connection to the Fock space). We do away with bi-colored vertices by considering marked points on elevators as floors of size zero. The adjacent half-edges have to be thickened, since they are adjacent to the marked point. 
\item We  thicken ends that correspond to a tangency at a non-fixed point,  have unthickened ends for tangency to a fixed point (rather than marking the end with a double circle)  and remove the vertices at the end of these edges. 
\item We draw all elevator edges adjacent to marked points, as that allows us to record the complete tangency data for the invariant we are trying to compute (in the convention of \cite{FM09,BGM10}, obvious continuations of edges in the tropical curve are dropped in the floor diagram). 
\end{enumerate}
As an example,  the floor diagram of Figure \ref{fig-floors2} becomes with our conventions the diagram in Figure \ref{fig-floordiagram2a}.


\subsection{Floor diagrams and  degeneration}
\label{claflo} As seen in the previous section, tropical curves naturally arise by counting curves in maximal degenerations, while the correspondence from relative descendants to floor diagrams follows from the simpler ``accordion'' degeneration. This was originally observed and discussed in \cite{Bexp, AB14}. We work in this section with Jun Li's degeneration formula for relative stable maps. We recall Li's theorem, stated in the specific geometric context that is of interest to us~\cite{Li02}. 

%

 \begin{theorem}
 \label{degfor}
 Let ${\mathscr X}_t$ be a flat family of surfaces such that the general fiber is a smooth Hirzebruch surface $\mathbb{F}_k$ and the central fiber is the union of  two surfaces $S_1\cup_{D} S_2$ both isomorphic to $\mathbb{F}_k$, meeting transversely along the divisor $D = E_{S_1} = B_{S_2}$. Fix a two part partition of the set $[n+m]$: without loss of generality we may choose $\{1, \ldots n\} \cup \{n+1, \ldots, n+m\}.$ Set  discrete invariants $g,n, k_1,  \dots, k_n, ({\underline{\phi}}, {\underline{\mu}})$   as in Notation \ref{not-delta}.
 
Then:
 \begin{align} \label{for-deg}\begin{split}
& \langle  ({{\underline{\phi}}^-}, {{\underline{\mu}}^-})| \tau_{k_1}(pt),\ldots, \tau_{k_{n+m}}(pt)|({{\underline{\phi}}^+}, {{\underline{\mu}}^+})\rangle^{\mathsf{rel},\bullet}_{g} = 
\sum \frac{\prod{\lambda_i \eta_j}}{|Aut({\underline{\lambda}})||Aut({\underline{\eta}})|} \cdot\\
& \langle ({{\underline{\phi}}^-}, {{\underline{\mu}}^-})|\tau_{k_1}(pt),\ldots, \tau_{k_{n}}(pt)|({\underline{\lambda}},{\underline{\eta}})\rangle^{\mathsf{rel},\bullet}_{ g_1}  \langle ({-\underline{\eta}},{-\underline{\lambda}})| \tau_{k_{n+1}}(pt),\ldots, \tau_{k_{n+m}}(pt)|({{\underline{\phi}}^+}, {{\underline{\mu}}^+})\rangle^{\mathsf{rel},\bullet}_{g_2},\end{split}
 \end{align}
 where ${\underline{\lambda}} = \lambda_1, \ldots, \lambda_r, {\underline{\eta}} = \eta_1, \ldots, \eta_s$ are an $r$-tuple and and an $s$-tuple of positive integers and the sum is over all discrete data $(g_1, g_2, ({\underline{\eta}},{\underline{\lambda}}))$ such that:
 \begin{enumerate}
		\item $ (({{\underline{\phi}}^-} , {\underline{\lambda}}), ({{\underline{\mu}}^-}, {\underline{\eta}}))$ (resp. $(( {-\underline{\eta}},{{\underline{\phi}}^+} ), ({ -\underline{\lambda}},{{\underline{\mu}}^+}))$) determines an effective curve class $a_1B_{S_1}+b_1F_{S_1}$ (resp. $a_2B_{S_2}+b_2F_{S_2}$) in $H_2(\mathbb{F}_k,\ZZ)$  with $a_1, a_2 \geq 0$, $a_1+a_2 = a$, $b_1 = a_2k+b$, $b_2=b$;
	\item $g = g_1 +g_2 + r+s -1$.
 \end{enumerate}
 
 \end{theorem}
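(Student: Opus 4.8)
The plan is to recognize this statement as an instance of Jun Li's general degeneration formula for relative stable maps~\cite{Li02}, specialized to the case of a smooth family degenerating a Hirzebruch surface $\mathbb F_k$ to a transverse union of two copies of $\mathbb F_k$ glued along $E_{S_1} = B_{S_2}$. So strictly speaking there is essentially nothing to prove beyond carefully matching the bookkeeping in our notation to Li's; I would organize the argument in the following steps. First I would set up the degeneration: the family $\mathscr X_t \to \spec(\CC[\![t]\!])$ with generic fiber $\mathbb F_k$ and central fiber $S_1\cup_D S_2$ is a simple normal crossings (indeed semistable) degeneration, so Li's splitting formula applies verbatim to the relative invariants with respect to the pair $(\mathbb F_k, B+E)$: by deformation invariance of the relative Gromov--Witten class, the invariant computed on $\mathbb F_k$ equals the one computed on the central fiber, and the latter decomposes as a sum over all ways to distribute curve class, genus, marked points, and the cohomology-weighted diagonal over the splitting divisor $D$.

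Next I would translate each piece of Li's sum into the discrete data of Notation~\ref{not-delta}. The marked points split according to the fixed partition $\{1,\dots,n\}\cup\{n+1,\dots,n+m\}$; the $\psi$-classes $\psi_j^{k_j}$ and the point insertions $ev_j^\ast([pt])$ are pulled back from $\mathscr X$ and hence restrict to the corresponding insertions on the component containing the $j$-th point, which is why the descendant data simply partitions. The splitting over $D$ introduces a new relative condition: a partition recording the multiplicities of intersection with $D$, which on the $S_1$ side appears as tangency along $E_{S_1}$ (hence gets recorded in the positive part of the boundary data for $S_1$) and on the $S_2$ side appears as tangency along $B_{S_2}$ with the opposite sign (hence the $-\underline\eta$, $-\underline\lambda$). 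I would split this relative partition into the two families $\underline\lambda$ (intersection points that are \emph{fixed}, matching the $\varphi$-type/unthickened data) and $\underline\eta$ (intersection points at \emph{arbitrary} position, matching the $\mu$-type/thickened data), which accounts for the two separate tuples; the automorphism factors $|Aut(\underline\lambda)|$, $|Aut(\underline\eta)|$ and the product $\prod \lambda_i\eta_j$ of multiplicities are exactly the combinatorial weights in Li's gluing formula (the multiplicities come from the order of contact, the automorphisms from permuting indistinguishable gluing points). For the point insertions along $D$: since these relative points are either entirely fixed or entirely free, inserting the class of a point at the fixed ones and integrating against $[D]=[\PP^1]$ contributes the factors already accounted for, and no extra diagonal class insertion is needed beyond what is recorded.

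Then I would check the two numerical constraints. Condition (1), that $((\underline\phi^-,\underline\lambda),(\underline\mu^-,\underline\eta))$ is an effective class $a_1 B_{S_1}+b_1 F_{S_1}$ with $a_1+a_2=a$, $b_1 = a_2 k + b$, $b_2 = b$, is precisely the requirement that the curve classes on $S_1$ and $S_2$ glue to the total class $aB+bF$ on $\mathbb F_k$ under the identification $E_{S_1}=B_{S_2}$: the $F$-degree of the $S_1$ component picks up an extra $a_2 k$ because $E_{S_1} = B_{S_1}-kF_{S_1}$ has self-intersection $-k$, so matching contact numbers along the gluing divisor forces the stated relation (this is just equation~\eqref{curveclass} applied on each side together with the bookkeeping of how $E$ and $B$ differ by $kF$). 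Condition (2), $g = g_1+g_2+r+s-1$, is the usual genus additivity under gluing along $r+s$ nodes (one for each part of $\underline\lambda$ and each part of $\underline\eta$), which in Li's formula is the statement that gluing two curves at $N$ points drops the Euler characteristic by $2N$, i.e. raises the arithmetic genus of the nodal union by $N-1$. I would also note the dimension constraint~\eqref{dimension} is automatically compatible, since the expected dimensions add up correctly across the splitting.

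The main obstacle, such as it is, is purely one of translation rather than mathematics: being careful that the sign conventions on the boundary data (negative entries $\leftrightarrow$ contact with the zero section $B$, positive $\leftrightarrow$ contact with $E$) are consistently propagated through the identification $D = E_{S_1} = B_{S_2}$, so that the $\underline\eta,\underline\lambda$ on the $S_1$ side reappear correctly as $-\underline\eta,-\underline\lambda$ on the $S_2$ side, and that the ``fixed'' versus ``arbitrary'' distinction (the $\varphi$ versus $\mu$ split, equivalently unthickened versus thickened) is correctly inherited by the new relative marked points created at the gluing divisor. Once this dictionary is fixed, the formula is a direct quotation of~\cite[Theorem in §]{Li02} (see also~\cite{GV05}), and the ``$\bullet$'' superscript indicating possibly disconnected source curves is exactly what makes the product structure on the right-hand side clean.
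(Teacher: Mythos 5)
Your proposal is correct and takes the same route the paper does: Theorem~\ref{degfor} is stated in the text not with an independent proof but explicitly as a recollection of Jun Li's degeneration formula (``We recall Li's theorem, stated in the specific geometric context that is of interest to us''), so the content is precisely the translation of Li's general formula into the notation of Notation~\ref{not-delta}, which you carry out. Your checks of conditions (1) and (2) -- the curve class bookkeeping via $E_{S_1}=B_{S_1}-kF_{S_1}$ and the genus additivity $g=g_1+g_2+(r+s)-1$ from gluing at $r+s$ nodes -- are exactly right.

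One small imprecision in your prose worth flagging: you describe the fixed/arbitrary distinction as ``correctly inherited'' across the gluing divisor, but what actually happens (and is the point of the paper's remark following the theorem) is that the roles \emph{swap}. The $\underline\lambda$ appear in the fixed slot on $S_1$ and in the free slot on $S_2$, and dually for $\underline\eta$; this switch is precisely the K\"unneth decomposition $[\Delta]=[pt]\otimes 1 + 1\otimes[pt]$ of the diagonal in $D\times D\cong\PP^1\times\PP^1$ in Li's formula. You do write the resulting boundary data $({\underline\lambda},{\underline\eta})$ and $(-\underline\eta,-\underline\lambda)$ in the correct positions, so the formula you arrive at is right; only the explanation in words undersells the mechanism.
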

 
Note that the superscript $\bullet$ refers to disconnected Gromov-Witten theory. 
 
 
 \begin{remark}
The following details are important in parsing Equation \eqref{for-deg}: 
\begin{enumerate}
\item The formula is organized as a sum over the gluing data $(\underline{\lambda}, \underline{\eta})$. Each term in the summand is  however weighted by a factor of $\frac{1}{|Aut({\underline{\lambda}})||Aut({\underline{\eta}})|}$, which corrects the overcounting coming from different labelings of points that give rise to the same gluing. More geometrically, one may think that in Equation \eqref{for-deg} the sum is over the distinct topological types of maps (where the points that get glued are unlabeled), and the multiplicity of each summand omits the above factor.
\item The switching of the roles of $\underline{\lambda}$  and $\underline{\eta}$ on the two sides of the product comes from the Kunneth decomposition of the class of the diagonal in $\mathbb{P}^1\cong D$.
\end{enumerate}
 \end{remark}
 
To realize the hypotheses of the theorem, one may start from a trivial family $\mathbb{F}_k\times \mathbb{A}^1 \to \mathbb{A}^1$ together with $n+m$ non intersecting sections $s_v$, the first $n$ staying away from $E$, the last $m$ meeting but not tangent to $E$ at $t=0$; one obtains ${\mathscr X}_t$ by blowing up $E\times \{0\}$ and considering the proper transforms of the sections. This construction may be iterated a finite number of times, and Theorem \ref{degfor} applies with the appropriate bookkeeping. This is what gives rise to the correspondence with the floor diagram count, as we make explicit in the next theorem. We have included the statement and proof for completeness. Essentially identical results may be found in~\cite{MR16,NS06}.

\begin{theorem}\label{thm-flooralg}

Fix a Hirzebruch surface $\mathbb{F}_k$ and discrete data as in Notation \ref{not-delta}. 
The descendant Gromov--Witten invariant coincides with the weighted count of floor diagrams from Definition \ref{def-floorcount}
\begin{equation}
\langle  ({{\underline{\phi}}^-}, {{\underline{\mu}}^-})| \tau_{k_1}(pt),\ldots, \tau_{k_{n}}(pt)|({{\underline{\phi}}^+}, {{\underline{\mu}}^+})\rangle^{\bullet, \floor}_{g}= \langle  ({{\underline{\phi}}^-}, {{\underline{\mu}}^-})| \tau_{k_1}(pt),\ldots, \tau_{k_{n}}(pt)|({{\underline{\phi}}^+}, {{\underline{\mu}}^+})\rangle^{\mathsf{rel},\bullet}_{g}.
 \end{equation}
\end{theorem}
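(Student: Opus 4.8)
The plan is to prove Theorem \ref{thm-flooralg} by induction on the number of floors, using the iterated ``accordion'' degeneration described immediately before the statement and peeling off one floor at a time via Li's degeneration formula (Theorem \ref{degfor}). First I would fix the discrete data as in Notation \ref{not-delta} and set up the degeneration of $\mathbb{F}_k$ into a chain $S_1 \cup_D \cdots \cup_D S_{a+n}$ of copies of $\mathbb{F}_k$ glued $0$-section to $\infty$-section, arranged so that each fixed point condition $\tau_{k_j}(pt)$ lands on a distinct component; the number of components is chosen large enough ($a$ for the honest floors, plus one per elevator marked point) that each component carries exactly one insertion. The key combinatorial observation is that the vertices of a floor diagram are in bijection with the components of this maximal chain degeneration, compact edges of the diagram correspond to the gluing data $(\underline{\lambda}_i,\underline{\eta}_i)$ across consecutive nodes, and the size $s_V$ of a vertex records the $B$-degree supported on that component, which is forced to be $0$ or $1$ for dimensional reasons once a single point insertion constrains each component --- this is exactly where Lemma \ref{lem-gammaminuspoint} and the dimension equation \eqref{dimension} enter.

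Next I would carry out the bookkeeping of the recursion. Applying Theorem \ref{degfor} repeatedly expresses the left-hand (relative, disconnected) invariant as a sum over all gluing data of products of one-component invariants, with weight $\prod w(e) / (|\mathrm{Aut}(\underline\lambda)||\mathrm{Aut}(\underline\eta)|)$ at each node; the genus condition $g = \sum g_i + \sum(r_i + s_i) - 1$ aggregates across the chain to the floor-diagram genus condition (first Betti number plus sum of vertex genera). I would check that the sign/orientation data in conditions (1)--(7) of Definition \ref{def-floor} match: condition (5), the signed balancing $\sum \pm w(e) = -k\cdot s_V$ at each vertex, is precisely the curve-class constraint $b_i = a_{i+1}k + b_{i+1}$ from part (1) of Theorem \ref{degfor} rewritten locally; the thickening rule in conditions (2)--(3) records which half-edges carry the $\underline\mu$-type (non-fixed) tangencies versus the $\underline\phi$-type (fixed) tangencies and interacts with the $\psi$-power count $k_V + 2 - 2s_V - g_V$ via the valence/dimension balance for a one-point descendant invariant on a single $\mathbb{F}_k$. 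The one-component invariants that appear are by definition exactly the vertex multiplicities $\mathrm{mult}(V)$ of Definition \ref{def-floormult}, so the sum over gluing data reorganizes into $\sum_D \mathrm{mult}(D)$ with $\mathrm{mult}(D) = \prod_{e\in C.E.} w(e)\cdot\prod_V \mathrm{mult}(V)$ as in Definition \ref{def-floorcount}, after absorbing the automorphism factors into the passage from labeled to unlabeled gluings (the remark following Theorem \ref{degfor}).

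The main obstacle I expect is not the algebra of the degeneration formula but controlling which terms in the iterated sum are actually nonzero, i.e.\ showing the sum over gluing data is genuinely a sum over floor diagrams and nothing extra. Concretely, one must argue: (a) a component can support at most $B$-degree one once it meets a single point insertion, so $s_V \in \{0,1,\dots\}$ is constrained and elevator components have $s_V = 0$; (b) the descendant $\psi^{k_j}$ forces the genus and valence distribution recorded in Definition \ref{def-floor}(3); and (c) no ``wrong'' distributions of the $\underline\lambda,\underline\eta$ data survive. Part (a) follows because the chain degeneration is built by blowing up sections one at a time, so each component's curve class is pinned down; part (b) is the same dimension count that appears in the proof of Theorem \ref{thm-corres} and in Remark \ref{rem-gen}, now applied component-by-component; part (c) is combinatorial. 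Once these vanishing statements are in place, the bijection between surviving terms and floor diagrams is the identity of data, and the weight-matching above finishes the proof. I would also remark that this gives the relative side of the diagram in Figure \ref{fig-results}; combined with Theorem \ref{thm-corres} it does \emph{not} immediately yield the log--floor comparison, which is why the theorem is phrased for the relative invariants and the log comparison is routed through the tropical correspondence.
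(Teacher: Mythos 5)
Your degeneration setup is wrong in a way that would break the proof precisely in the case this theorem is about: descendants. You split $\mathbb{F}_k$ into a chain of $a+n$ components ($a$ for ``honest floors,'' plus one per elevator) and assert that the $B$-degree $s_V$ supported on a component meeting a single point insertion is ``forced to be $0$ or $1$ for dimensional reasons.'' That constraint is correct only in the primary ($k_j=0$) case. In the stationary descendant setting, a marked point carrying $\psi^{k_j}$ can sit on a floor of size $s_V$ with $k_V + 2 - 2s_V - g_V \geq 0$, i.e.\ $s_V$ can be as large as $(k_V + 2 - g_V)/2$; Figure~\ref{fig-floordiagram2a} already shows a vertex of size $2$. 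As the paper emphasizes (Section~\ref{workflo}), allowing floors of size greater than one is exactly the new phenomenon that distinguishes $\psi$-floor diagrams from the classical ones of Brugall\'e--Mikhalkin. Your chain, built so that each floor component carries unit $B$-degree, cannot produce a size-$2$ floor with a single marked point on it, so the bijection between degeneration-formula summands and floor diagrams simply fails.

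The paper's proof sidesteps this by degenerating into a chain of exactly $n$ components, one per marked point (not $a+n$). On the $i$-th component $S_i$, the unique marked point pins down a single connected component of the disconnected curve whose $B$-degree is the floor size $s_i$ -- unconstrained beyond nonnegativity -- while every other connected component on $S_i$, carrying no marked point, is forced by dimension to be a rational $d$-fold cover of a fiber fully ramified at both gluing divisors, contributing $1/d$. These pass-through components give rise to $2$-valent vertices in the dual graph that the paper contracts (each contraction killing a compact edge of weight $d$, which exactly cancels the $1/d$). Your write-up has no analogue of this contraction step, and with $a+n$ components you would generate far more of these pass-through pieces, with no mechanism to absorb them. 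You also never explain what it means for a component to carry ``no insertion'' in your chain, which would be the generic situation if most of the $a$ floor components hold part of a larger floor. To salvage your argument, change the chain to $n$ components, drop the size constraint, and add the $2$-valent-vertex contraction bookkeeping -- at which point you have reproduced the paper's proof.

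Your closing remark that this argument gives only the relative/floor comparison, and that the log/floor comparison routes through Theorem~\ref{thm-corres} and the tropical side, is correct and matches Figure~\ref{fig-results}.
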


\begin{proof}
As with many proofs based on iterated applications of the degeneration formula, a completely explicit and accurate bookkeeping would be extremely cumbersome and cloud the actual simplicity of the argument. We choose therefore to carefully outline the construction, and omit the bookkeeping.

Iterate the construction from the previous paragraph $n-1$ times, each time separating exactly one section from all others. In the end one obtains a family ${\mathscr X}_t$  such that the general fiber is a smooth Hirzebruch surface $\mathbb{F}_k$ and the central fiber is the union of  $n$ surfaces $X_0 = S_1\cup_{D_1} S_2\cup_{D_2}\ldots \cup_{D_{n-1}} S_n$, where all surfaces $S_i$ are isomorphic to $\mathbb{F}_k$, and $S_i$ and $S_{i+1}$ meet transversely along the divisor $D_i = E_{S_i} = B_{S_{i+1}}$. For $V = 1, \ldots, n$ the section $s_V$ is obtained as the proper transform of the original section. 

Applying the appropriately iterated version of Theorem \ref{degfor}, the stationary descendant invariant is expressed as a sum over the topological types of maps from nodal curves to the central fiber, weighted by the appropriate (disconnected) relative Gromov--Witten invariants. Since each $S_i$  contains exactly one marked point, the disconnected maps to $S_i$ have one connected component hosting the marked point; by dimension reasons, the other components consist of rational curves mapping with degree   $dF$ (multiple  of the class of a fiber), and in fact mapping as a $d$-fold cover of a fiber, fully ramified at the points of contact with $D_{i-1}$ and $D_{i}$. Further, the relative conditions
at the boundary must have one fixed point on one side, and a moving point on the other. The contribution of any such component to the disconnected invariant is $1/d$.

For every summand in the degeneration formula, consider the dual graph of the source curve, label each edge with the ramification order of the corresponding point, and thicken half edges corresponding to moving boundary point conditions. For every two-valent vertex adjacent to two flags of opposite thickening, contract the vertex and the two neighboring flags. We claim (and leave the verification to the patient reader) that the object thus obtained is a floor diagram for the stationary descendant invariant we are trying to compute, and further  that this construction establishes a bijection between the summands in the degeneration formula and the floor diagrams described in Definition \ref{def-floorcount}.

The proof is concluded by showing that each floor diagram is counted with the same multiplicity. The degeneration formula assigns the same vertex and compact edge multiplicities to the dual graphs of maps as the floor diagram enumerative count. The proof is then concluded by noticing that the operation of removing a two-valent vertex (which contributes with multiplicity $1/d$) and its two adjacent flags does not alter the multiplicity of the graph:  for each such vertex removed we lose a compact edge of weight $d$, which contributes a factor of $d$ to the multiplicity of the graph.
\end{proof}

Since the proof of the correspondence is  based on a bijection between dual graphs of maps and floor diagrams that preserves connectedness, one immediately obtains the following corollary. 

\begin{corollary}
The version of Theorem \ref{thm-flooralg} for connected invariants also holds:
\begin{equation}
\langle  ({{\underline{\phi}}^-}, {{\underline{\mu}}^-})| \tau_{k_1}(pt),\ldots, \tau_{k_{n}}(pt)|({{\underline{\phi}}^+}, {{\underline{\mu}}^+})\rangle^{ \floor}_{g}= \langle  ({{\underline{\phi}}^-}, {{\underline{\mu}}^-})| \tau_{k_1}(pt),\ldots, \tau_{k_{n}}(pt)|({{\underline{\phi}}^+}, {{\underline{\mu}}^+})\rangle^{\mathsf{rel}}_{g}.
 \end{equation}
\end{corollary}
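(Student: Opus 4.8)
The plan is to extract from the proof of Theorem~\ref{thm-flooralg} slightly more than its statement: that argument does not merely equate two numbers, it produces an \emph{explicit} multiplicity-preserving bijection $\Phi$ between the summands of the iterated degeneration formula of Theorem~\ref{degfor} and the floor diagrams of Definition~\ref{def-floorcount}. A summand on the relative side is indexed by the dual graph $\Gamma$ of a stable map to the central fiber $X_0 = S_1\cup_{D_1}\cdots\cup_{D_{n-1}}S_n$, decorated by the ramification orders along the $D_i$ and by thickenings marking the moving boundary points; the floor diagram $\Phi(\Gamma)$ is obtained from $\Gamma$ by smoothing each two-valent vertex whose two flags carry opposite thickening (the rational fiber-cover components). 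I would prove the corollary by checking that $\Phi$ is compatible with the connected/disconnected dichotomy on both sides.

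First I would recall the exact sense in which the \emph{connected} invariant $\langle\cdots\rangle^{\mathsf{rel}}_g$ is computed by this degeneration: it is the sub-sum of the disconnected formula ranging over those configurations whose total dual graph $\Gamma$ (obtained by gluing the pieces along the $D_i$) is connected, subject to the genus constraint, which for connected $\Gamma$ reads $g = \sum_i g_i + b^1(\Gamma)$; this is precisely clause~(2) of Theorem~\ref{degfor} iterated and read as a connected statement. Symmetrically, $\langle\cdots\rangle^{\floor}_g$ in Definition~\ref{def-floorcount} is by definition the weighted count over \emph{connected} floor diagrams of genus $g$. So both sides of the asserted identity are sub-sums, over the respective connected objects, of the two sides of Theorem~\ref{thm-flooralg}.

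Next I would verify that $\Phi$ preserves connectedness and the first Betti number. Recording a graph together with its edge weights and half-edge thickenings does not change the underlying topological space; and smoothing a two-valent vertex (replacing a degree-two point on an edge-path by a single edge) neither merges nor splits connected components and leaves $b^1$ unchanged. Since the vertex genera $g_i$ are untouched as well, $\Gamma$ is connected of total genus $g$ if and only if $\Phi(\Gamma)$ is a connected floor diagram of genus $g$. Combined with the term-by-term equality of multiplicities already established in the proof of Theorem~\ref{thm-flooralg}—each smoothing trades a compact edge of weight $d$ for the factor $1/d$ contributed by the contracted fiber cover, leaving $\prod_e w(e)\cdot\prod_V \mult(V)$ unchanged—restricting the bijection to connected objects on both sides gives $\langle\cdots\rangle^{\mathsf{rel}}_g = \sum_{\Gamma\ \mathrm{conn.}}(\mathrm{mult}) = \sum_{D\ \mathrm{conn.}}\mult(D) = \langle\cdots\rangle^{\floor}_g$.

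There is no real obstacle here—this genuinely is a corollary of the proof just given—so the only point demanding care is the first step: stating cleanly that passing to connected invariants on the Gromov--Witten side corresponds exactly to restricting the degeneration sum to connected dual graphs, and checking that under $\Phi$ this restriction matches the connectedness requirement that is already built into the definition of the connected floor count. Once this compatibility is spelled out, the connected identity follows immediately from the disconnected one.
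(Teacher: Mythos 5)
Your argument is exactly the paper's: the authors observe that the bijection constructed in the proof of Theorem~\ref{thm-flooralg} preserves connectedness, so restricting both sides to connected objects gives the corollary immediately. You have simply spelled out the details (that smoothing two-valent vertices leaves connectedness and $b^1$ intact, and that connected invariants are the sub-sums over connected dual graphs), which is a harmless elaboration of the same one-line argument.
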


\section{Floor diagrams via the tropical theory}\label{troflo}

In this section, we examine the effect of {horizontally stretched} constraints on tropical maps. Tropical maps meeting such constraints {are floor decomposed, and} better behaved than for general conditions. This allows us to prove a degeneration formula for the logarithmic descendants, expressing them as a product over vertices. We use this to present a direct weighted bijection between counts of tropical stable maps and floor diagrams. 

If we consider the enumerative problem with $k_i=0$ for all $i$, then the tropical descendant invariant considered above is nothing but a count of tropical plane curves satisfying point conditions. For such a count, it is well-known, see for instance~\cite{Mi03}, that all tropical stable maps $(\Gamma,f)$ that contribute with nonzero multiplicity have $g$ cycles which are visible in the image $f(\Gamma)$ and have only trivalent vertices. It then follows that their spaces of deformations have the expected dimension, because the $g$ visible cycles impose $2g$ linearly independent conditions in the orthant parametrizing all lengths on bounded edges. Hence superabundancy is not present for tropical plane curve counts. 

The primary tool in this section is a specialization of the tropical points into special configuration. 

\begin{definition}
A set $p_1,\ldots, p_n\in \RR^2$, with $p_i = (x_i,y_i)$ is said to be in \textbf{horizontally stretched position} if 
\begin{itemize}
\item The $x$-coordinates increase, i.e. $x_i<x_{i+1}$ for all $i$.
\item The $y$-coordinates decrease, i.e. $y_i<y_{i+1}$ for all $i$.
\item The $x$ coordinates are much larger than the $y$-coordinates,
\[
\min_{i\neq j}|x_i-x_j|\gg \max_{i\neq j}|y_i-y_j|.
\]
\end{itemize} 
\end{definition}

As observed by Brugall\'e, Fomin, and Mikhalkin, the choice of distinguished stretching direction determines a distinguished decomposition of a tropical map.

\begin{definition}
Let $\Gamma\to \RR^2$ be a tropical stable map. An \textbf{elevator edge} is an edge in $\Gamma$ whose image has edge direction parallel to $(1,0)$. A \textbf{floor} of a tropical map $\Gamma\to \RR^2$ is a connected component in $\Gamma$ of the complement of all elevator edges. 
\end{definition} 

Note that if a descendant lies \textit{on} an elevator edge, the corresponding vertex is a floor supporting the marked point. The following proposition shows the usefulness of the notions above. 

\begin{proposition}
Let $\Gamma\to\RR^2$ be a rigid tropical stable map meeting horizontally stretched stationary descendant constraints. Each floor of $\Gamma$ meets and is fixed by exactly one stationary descendant constraint. 
\end{proposition}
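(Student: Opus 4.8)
The strategy is a dimension count applied floor-by-floor, using the rigidity hypothesis together with Lemma~\ref{lem-gammaminuspoint} and the structure of horizontally stretched configurations. First I would set up the parameter space. By rigidity, $(\Gamma,f)$ is an isolated point of the moduli of tropical maps of its combinatorial type meeting the constraints, so the expected dimension of the type equals the number of constraints it is required to satisfy. The constraints are: $n$ point conditions $p_i$ (each imposing $2$ conditions on the $x,y$ coordinates, but note the contracted marked end carries no length so these interact with the valency conditions as in Remark~\ref{rem-gen}), the descendant valency conditions $\val(V_i)=k_i+3-g(V_i)$, and the fixed $y$-coordinates of the marked ends. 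The key input I would use is that, since the $x$-coordinates of the $p_i$ are spread infinitely far apart relative to the $y$-coordinates, any connected piece of $\Gamma$ not containing one of the $p_i$ cannot ``travel'' between the vertical strips containing distinct points; so each floor, being by definition a connected component of the complement of the elevator edges, must have its ``horizontal extent'' pinned down by point conditions.

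The main steps, in order, would be: (1) Show each floor contains \emph{at least} one stationary descendant constraint. Suppose a floor $\Phi$ contains none. Since $\Phi$ is a connected component of $\Gamma$ minus the elevator edges, it only meets elevator edges at its boundary; these are horizontal. But $\Phi$ has no contracted ends and (by the convention on Hirzebruch degrees) its non-horizontal ends are vertical or diagonal of expansion factor $1$. A balancing/dimension argument then shows that such a $\Phi$, with no point condition to fix its horizontal position, can be translated horizontally within a family preserving the combinatorial type — the elevator edges adjacent to it simply lengthen or shorten — contradicting rigidity. Here I would invoke Lemma~\ref{lem-gammaminuspoint}: each component of $\Gamma$ minus the marked ends is rational and has exactly one non-fixed end, which controls how much freedom is available. (2) Show each floor contains \emph{at most} one such constraint. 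This is the dimension-counting heart: if a floor carried two descendant constraints, then — because a floor with its elevators is itself a ``sub-tropical-curve'' of lower degree — one can compare the number of point conditions absorbed by that floor against the dimension of its deformation space, and the horizontally stretched hypothesis forces the two point conditions to instead be distributed one-per-floor, else the count of bounded edges does not match (the elevator edges joining floors provide exactly the right number of ``free lengths''). Equivalently: the total number of floors equals $n$ (one marked point each), which follows from summing the local dimension contributions and matching with equation~\eqref{dimension}. (3) Conclude that the unique point condition on a floor $\Phi$ rigidifies it: its horizontal position is determined, and combined with the fixed $y$-data and balancing, $\Phi$ admits no nontrivial deformation, i.e. it \emph{is fixed} by that constraint.

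The hard part will be step (2), making precise the claim that two point conditions cannot collapse onto the same floor. The subtlety is that a floor can have positive genus and higher-valency vertices (unlike the primary case), so I cannot simply say every floor is a ``thickened edge''; I need the inductive/local structure of floor-decomposed maps — essentially that a floor together with its adjacent elevator stubs behaves like a tropical stable map to $\mathbb{F}_k$ of smaller size, to which the same dimension bookkeeping applies. I expect to handle this by: choosing the refinement $\mathscr P$ as in the proof of Theorem~\ref{thm-corres}; observing that after stretching, no bounded non-elevator edge can be ``long'' in the $x$-direction; and then running the dimension formula \[ \#\{\text{bounded edges}\} - 2b^1(\Gamma) = \#\{\text{ends}\} + b^1(\Gamma) - 1 - \textstyle\sum k_i - \sum_V(\val(V)-3) \] separately over the floors and the elevators, using that elevators contribute exactly one free length each and are cut down by the $y$-coordinate conditions and by the requirement that each $p_i$ lie on its floor. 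A secondary technical point is dealing with marked points that lie \emph{on} an elevator edge, which per the remark before the proposition count as size-zero floors; I would treat these uniformly by declaring such a vertex its own floor, so the statement and its proof need no separate case.
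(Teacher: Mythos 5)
The paper itself gives no argument: it simply defers to the analogous statement in Fomin--Mikhalkin (``identical arguments as in \cite{FM09}''), so the relevant comparison is with the FM mechanism. Your overall skeleton (at least one constraint per floor by rigidity; at most one by the stretching; conclude the floor is pinned) is the right skeleton, and your Step~1 is correct and is essentially the FM argument. The genuine gap is Step~2.

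You label Step~2 ``the dimension-counting heart,'' but a pure dimension count cannot rule out two marked points landing on the same floor. The number of bounded edges, the first Betti number, and the expected dimension of a combinatorial type are all discrete invariants of the type; they are unchanged by stretching the configuration, so an argument that only manipulates these quantities cannot see the stretching hypothesis at all. It is perfectly consistent, at the level of the dimension formula
\[
\#\{\text{ends}\}+b^1(\Gamma)-1-\sum k_i-\sum_V(\val(V)-3) = \#\{\text{bounded edges}\}-2b^1(\Gamma),
\]
to have a type with fewer floors (some carrying two marked ends) and correspondingly more elevator edges; the ``free lengths'' you invoke actually rebalance in exactly the way needed to keep the book\-keeping consistent, not to produce a contradiction. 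In particular, the assertion ``the total number of floors equals $n$'' is the \emph{conclusion} you are trying to prove, not a consequence of \eqref{dimension}; deducing it from Step~1 alone is also not possible, since Step~1 only shows $\#\{\text{floors}\}\le n$.

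The argument that actually does the work is the geometric one you gesture at in your ``key input'' paragraph, and it should be promoted to be the content of Step~2. Concretely: the non-elevator edges of a floor are dual to edges of the sliced Newton polygon, whose primitive directions have $y$-component bounded below in absolute value (by $1$) and $x$-component bounded above (by a constant depending only on the degree $\delta_{(\underline{\phi},\underline{\mu})}$); the number of such edges in any floor is bounded by the combinatorial type; and the vertical extent of the whole configuration is controlled by $\max_{i\ne j}|y_i-y_j|$ and the fixed $y$-coordinates. Together these bound the horizontal extent of any floor by a constant independent of the stretching. Once $\min_{i\ne j}|x_i-x_j|$ exceeds that bound, no connected floor can contain two of the $p_i$, and one need only note the paper's convention that a marked point on an elevator edge is itself declared a size-zero floor, so that every $p_i$ lies on some floor. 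Combined with your Step~1 this gives exactly one point per floor, and your Step~3 then correctly concludes that the floor is pinned (its horizontal position by the one point condition, the rest by balancing and the elevator $y$-data). So: keep Steps~1 and~3, but replace the dimension-count framing of Step~2 with the bounded-horizontal-extent argument; as written, Step~2 does not follow.
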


\begin{proof}
The proof follows from identical arguments as in~\cite{FM09}, Lemma 3.6: if there was no marked point in a floor, we could vary the vertical position of the floor and the curve would not be rigid. The vertical position is fixed as soon as one marked point is present. 
\end{proof}

A tropical stable map meeting the horizontally stretched descendant constraints will be called \textbf{floor decomposed}.

\subsection{Excluding superabundant maps} When descendant insertions are allowed, even if all cycles are visible in the image, they do not need to impose linearly independent conditions.
 The existence of superabundant tropical stable maps satisfying the conditions implies the existence of rigid tropical stable maps with ''additional overvalency'', e.g.\ a $4$-valent vertex which is not adjacent to a marked point (see Example 3.10 in \cite{GM051}). In what follows, we restrict such behavior for the case of floor-decomposed tropical stable maps. 
 
 Let $(\Gamma,f)$ be a tropical stable map and $L\subset \Gamma$ be a cycle. We say that \textbf{the cycle $L$ is visible} if for any open neighborhood $U_L$ of $L$ in $\Gamma$, its image $F(U_L)$ is not contained an affine line in $\mathbb R^2$. Note that when $\Gamma$ is trivalent, the balancing condition guarantees that $L$ is contained in an affine line if and only if some neighborhood of $L$ is contained in an affine line, so $L$ is indeed ``visible'' in the image. The definition is made in order to appropriate capture the degenerations.

\begin{proposition}\label{lem-cyclesvisible}
If $(\Gamma,f)$ is a floor decomposed rigid tropical stable map contributing to the Gromov--Witten invariant with nonzero multiplicity, then the following conditions hold.
\begin{enumerate}[(A)]
\item No cycle of $\Gamma$ is contracted to a point.
\item All cycles of $\Gamma$ are visible in $f(\Gamma)$.
\item If there exists vertices $V$ and $W$ of $\Gamma$ with two or more edges between them, then these edges are mapped horizontally by $f$, i.e. the edge is contained in a line parallel to the $x$-axis. 
\item In the situation of (C), either $V$ or $W$ carries a marked point of $\Gamma$. 
\end{enumerate}
\end{proposition}

\begin{proof} Since we assume that $(\Gamma,f)$ contributes with non-zero multiplicity, it has to be rigid.
If a cycle of $\Gamma$ was contracted to a point, then $(\Gamma,f)$ would not be rigid because the lengths of edges of the contracted cycle can be varied without changing the image $f(\Gamma)$. We could vary $(\Gamma,f)$ in an at least one-dimensional family still meeting the point and $y$-coordinate conditions. This establishes the first claim.

Assume an open neighborhood of a cycle of $\Gamma$ is mapped to a line segment $S$ in $\mathbb R^2$. Let $V$ be a vertex of the cycle mapping to an endpoint of $S$. The genericity of incidence conditions guarantees that only one side of this flat cycle bears a marking, and we take $V$ to be the vertex without a marking. Since we have already ruled out  contracted cycles, the segment $S$ has two distinct endpoints. Such a map $(\Gamma,f)$ does not contribute to the logarithmic invariant. Indeed, $(\Gamma,f)$ is once again non-rigid. Since an open neighborhood of a circuit is contracted to $S$, the length of the image of $L$ itself may be varied without changing the image (see  Figure \ref{fig-move}), implying non-rigidity. This establishes the second claim. 

Let $V$ and $W$ be edges of $\Gamma$ with multiple (i.e. parallel) edges between them. If $f$ did not map the edges between $V$ and $W$ horizontally, $V$ and $W$ would be in the same floor. Assume without loss of generality that $V$ is higher than and to the right of $W$. Recall, we impose that the ends in the positive $y$-direction all have slope $1$. Since there are at least $2$ parallel edges, there are at least $2$ ends in the floor containing $V$ and $W$. The horizontal distance between these two parallel ends, however, cannot be fixed by the point constraints, so we conclude that such a map $(\Gamma,f)$ cannot be rigid, giving us the third claim. 

For the final claim, we consider the situation of a rigid curve as in $(C)$, with parallel multiple horizontally mapped edges between $V$ and $W$, where neither carries a marked point. By definition $V$ and $W$ are in different floors, but neither $V$ nor $W$ has its position fixed by a marked point. We may therefore vary the vertical coordinate of edge in the image curve between $f(V)$ and $f(W)$ while still meeting all of the descendant constraints (consider Figure~\ref{fig-multzero} dropping the marked point on the left vertex of the double edge). Such a curve could therefore never be rigid, and no such tropical curve contributes to the Gromov--Witten invariant. 

\end{proof}
\begin{figure}[bt]
\input{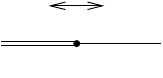_t}
\caption{Tropical maps with a trivalent vertex at the end of a hidden cycle are not rigid.
}\label{fig-move}
\end{figure}

\begin{lemma}\label{lem-cyclesindependent}
Let $(\Gamma,f)$ be a rigid floor decomposed tropical stable map, satisfying horizontally stretched conditions. Assume that $g'$ cycles, the images of their edges span $\mathbb{R}^2$, then these cycles impose $2g'$ linearly independent conditions on the lengths of the bounded edges of $\Gamma$.
\end{lemma}

\begin{proof}
The conditions imposed by the cycles are given as a $2g'\times b$ matrix, where $b$ denotes the number of bounded edges. We show that this matrix has rank $2g'$.
Since $(\Gamma,f)$ is rigid, a cycle cannot be contained in a floor, as each floor is fixed by a single point condition. Each of the $g'$ cycles thus have to connect at least two floors, and therefore involve at least two elevator edges.
If there are cycles like in Proposition \ref{lem-cyclesvisible} (C), i.e.\ flat cycles consisting of multiple elevator edges, we can choose one of the multiple edges for each, and our basis of  $H_1(\Gamma)$, in such a way that each of the $g'$ cycles contains at most our preferred of the multiple edges. 

Each cycle contributes two rows to our matrix. We first consider the $x$-coordinate rows. We pick a cycle $C_1$ and an elevator edge $e_1$ in it. We then adapt our basis of $H_1(\Gamma)$ such that no other cycle contains $e_1$. We iterating this procedure. The first $g'$ rows of our matrix are the $x$-coordinate rows of the cycles in the thus chosen order, and the first $g'$ columns are the elevator edges we picked for each.
Then our matrix has a block form, with a $g'\times g'$ triangular block on the top left. 

We next discuss the rows with the $y$-coordinates of our cycles. 
Since $(\Gamma,f)$ is rigid, every floor edge has a direction vector with $y$-coordinate $\pm 1$: as in the proof of Proposition \ref{lem-cyclesvisible} (C) we would otherwise have two parallel ends whose horizontal position could not be fixed by the point conditions. Thus every entry in our matrix belonging to a $y$-coordinate row of a cycle and the column of a floor edge is $\pm 1$, or $0$ if the edge does not belong to the cycle.

Notice that the set of all floor edges of a cycle determines the cycle: this is true since for our choice of point conditions, we can only have elevators with the same $y$-coordinate if they are adjacent to the same vertex.

Thus we can now order our cycles (and, accordingly, the rows of the matrix containing their $y$-coordinates) in a way refining the partial order given by containement of the set of floor edges in a cycle.
The $g'\times b-g'$ block on the bottom right does not need to have a triangular form, since it is possible that floor edges belong to several cycles. We start with the top row, belonging to the minimal cycle. We order the columns such that the first entry in the $g'\times b-g'$ is nonzero. We use it to reduce all nonzero entries below it in the lower rows. Since the lower rows correspond to larger cycles in our order, we cannot have canceled all nonzero entries in these rows in this way, this could only happen if the larger cycles had exactly the same floor edges. Thus, we can take the next Pivot column in the second row, and use it again to reduce the nonzero entries below it. Again, there will be nonzero entries left in the third row and we continue with the next Pivot. Continuing like this, we reduce our matrix to a row echelon form which has full rank. It follows that the $2g'$ cycles give independent conditions on the edge lengths.

\end{proof}
For an example, see Figure \ref{fig-cycles}. The elevator edges appearing in cycles are $e_1$, $e_3$, $e_5$ and $e_6$. We let $C_1$ be any cycle containing $e_1$, and $C_2$ the unique cycle not containing $e_1$. The edge $e_2$ is part of $C_1$, but not of $C_2$. The two equations for $C_1$ and $C_2$ restricted to the four edges $e_1,\ldots,e_4$ produce an upper triangular matrix immediatly in this example, without any reduction being necessary. 

\begin{figure}[tb]
\input{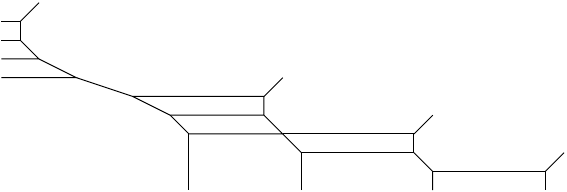_t}
\caption{The visible cycles in a floor decomposed tropical stable map pose independent conditions.}\label{fig-cycles}
\end{figure}

\begin{figure}
\input{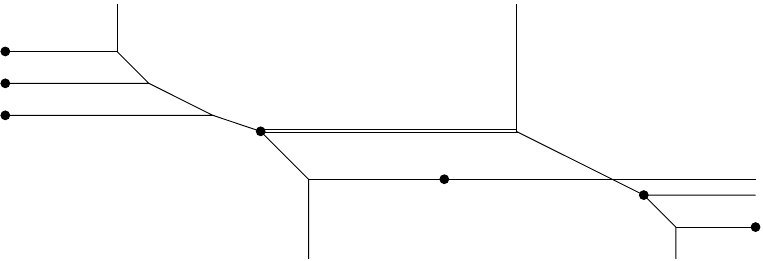_t}
\caption{A rigid tropical stable map contributing to the Gromov--Witten invariant. Note that the parallel edges must be horizontal, an open neighborhood of the cycle spans $\RR^2$, and no cycles are contracted to a point.}\label{fig-multzero}
\end{figure}

Combining Lemma~\ref{lem-cyclesindependent} and Proposition~\ref{lem-cyclesvisible}, we  deduce the following non-trivial fact:
\begin{corollary}\label{cor-notsuperabundant}
For horizontally stretched point conditions leading to floor-decomposed tropical stable maps, any $(\Gamma,f)$ that contributes with a non-zero multiplicity to a tropical descendant Gromov--Witten invariant is non-superabundant.
\end{corollary}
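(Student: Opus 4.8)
The plan is to deduce the statement by directly combining the two results just proved, so the corollary is essentially a matter of bookkeeping with the dimension count. Let $(\Gamma,f)$ be a floor decomposed tropical stable map contributing with non-zero multiplicity to a tropical descendant Gromov--Witten invariant. First I would invoke Proposition~\ref{lem-cyclesvisible}: since $(\Gamma,f)$ contributes with non-zero multiplicity, all cycles of $\Gamma$ are visible in $f(\Gamma)$. In particular, if $g'$ denotes the number of independent cycles, then $g'=b^1(\Gamma)$, i.e.\ \emph{all} of the first Betti number of $\Gamma$ is accounted for by visible cycles. (There are no vertices of positive genus here, since positive genus vertices would need their own point condition to be rigid, which is incompatible with the valence conditions of Definition~\ref{def-tropdescGWI}; alternatively one simply tracks that visible cycles plus vertex genera sum to $g$, and the vertex-genus contributions are pinned down by the same rigidity argument.)

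Next I would apply Lemma~\ref{lem-cyclesindependent}: these $g'=b^1(\Gamma)$ visible cycles impose $2g'=2b^1(\Gamma)$ linearly independent conditions, so the space of deformations of $(\Gamma,f)$ — equivalently, the cone parametrizing tropical stable maps of the same combinatorial type — has dimension equal to $\#\{\text{bounded edges}\} - 2b^1(\Gamma)$. This is precisely the right-hand side of the expected dimension formula recalled in Subsection~\ref{sec: virtual-multiplicity}, namely
\[
\#\{\text{ends}\} + b^1(\Gamma) - 1 - \sum k_i - \sum_V(\val(V)-3) = \#\{\text{bounded edges}\} - 2b^1(\Gamma).
\]
Hence the combinatorial type of $(\Gamma,f)$ attains its expected dimension, which is the definition of being non-superabundant.

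The one point that requires a little care — and the main (mild) obstacle — is matching the dimension of the deformation cone of $(\Gamma,f)$ as it sits inside the tropical moduli problem with the quantity $\#\{\text{bounded edges}\} - 2b^1(\Gamma)$ computed in Lemma~\ref{lem-cyclesindependent}. The lemma shows the locus of length vectors realizing the fixed combinatorial type is cut out by $2b^1$ independent linear equations inside the orthant $\RR^b$ of all edge lengths; one must then observe that this is exactly the cone referenced in the expected dimension formula, once the valency and point conditions (which are already built into the combinatorial type of a contributing $(\Gamma,f)$ via Definition~\ref{def-tropdescGWI}) and the end directions are accounted for. This is routine given the setup in~\cite{R16} cited above, but it is the only place where the bookkeeping between the ambient moduli space and the edge-length orthant needs to be made explicit. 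With that identification in hand, non-superabundancy follows immediately, completing the proof.
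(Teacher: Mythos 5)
Your main argument matches the paper's proof exactly: invoke Proposition~\ref{lem-cyclesvisible} to see that all cycles of $\Gamma$ are visible in the image, then invoke Lemma~\ref{lem-cyclesindependent} to see that these $b^1(\Gamma)$ visible cycles impose $2b^1(\Gamma)$ independent conditions, whence the deformation cone has dimension $\#\{\text{bounded edges}\}-2b^1(\Gamma)$, which is the expected dimension.

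One remark worth flagging: your parenthetical claim that ``there are no vertices of positive genus here'' is not correct in this setup, and fortunately is also not needed. Definition~\ref{def-tropdescGWI} explicitly allows a marked vertex $V$ to have $g(V)>0$, with the valence condition $\val(V)=k_j+3-g(V)$, and Lemma~\ref{lem-3valent} only forces \emph{non-marked} vertices to be trivalent of genus $0$. The reason this does no harm is that the expected dimension formula you quote involves only the graph-theoretic Betti number $b^1(\Gamma)$, not the full genus $g=b^1(\Gamma)+\sum_V g(V)$; vertex genera never enter the deformation space of edge lengths, so Proposition~\ref{lem-cyclesvisible} already supplies exactly the $g'=b^1(\Gamma)$ visible cycles you need, with no further accounting for vertex genera required. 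Dropping the parenthetical would make the argument both correct and cleaner.
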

\begin{proof}
From Proposition \ref{lem-cyclesvisible} we can conclude that all cycles of $\Gamma$ are visible in the image $f(\Gamma)$. From Lemma \ref{lem-cyclesindependent} we can conclude that those whose edges span $\mathbb{R}^2$ each impose two conditions, which are independent. Each flat cycle as in Proposition \ref{lem-cyclesvisible} (C) gives one condition, and the set of all conditions of the visible cycle is also independent. Also, each flat cycle imposes the existence of higher-valent vertices. It follows that the space of deformations of $(\Gamma,f)$ is of the expected dimension, and hence $(\Gamma,f)$ is not superabundant.
\end{proof}

\begin{remark}
For tropical stable maps to $\mathbb{R}^n$ with $n\geq 3$, there is no analogous statement known, i.e.\ it is not known whether there is a configuration of points such that all tropical stable maps (of non-zero multiplicity) satisfying the conditions are not superabundant, or even if there is a configuration of points forcing all cycles to be visible, which is a much weaker condition. It would be interesting to study the effect of floor stretched conditions in higher dimensions, particularly in light of~\cite[Theorem~1.1(1)]{MR16} and its potential applicability.
\end{remark}

%

\begin{lemma}\label{lem-3valent}
Let $(\Gamma,f)$ be a floor-decomposed tropical stable map contributing to a tropical descendant Gromov--Witten invariant with non-zero multiplicity. Then every vertex of $\Gamma$ which is not adjacent to a marked end or a flat cycle is trivalent of genus $0$.
\end{lemma}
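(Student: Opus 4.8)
The plan is to combine the structural results already established in this section -- specifically Lemma~\ref{lem-gammaminuspoint}, Corollary~\ref{cor-notsuperabundant}, and Proposition~\ref{lem-cyclesvisible} -- with a dimension count carried out vertex by vertex. Since $(\Gamma,f)$ contributes with non-zero multiplicity, it is rigid, hence by Corollary~\ref{cor-notsuperabundant} it is non-superabundant: the cone of deformations of the combinatorial type has exactly the expected dimension. Because $(\Gamma,f)$ is rigid this dimension is zero, so the expected-dimension formula
\[
\#\{\text{ends}\}+b^1(\Gamma)-1-\sum k_i-\sum_V(\val(V)-3)=0
\]
holds, where the left-hand side is exactly the quantity appearing in Section~\ref{sec-tropdesc}. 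The idea is then to show that each summand $\val(V)-3$ for a vertex $V$ not adjacent to a marked end, and each ``excess genus'' contribution at such a vertex, must individually vanish, because otherwise one could produce a nearby tropical stable map of the same type still meeting all constraints, contradicting rigidity, or else the curve would fail the point/descendant incidence count.

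The key steps, in order, are as follows. First I would set up the bookkeeping: the $n$ marked ends are contracted to the points $p_j$, each forcing its adjacent vertex $V_j$ to have valence $k_j+3-g(V_j)$ by Definition~\ref{def-tropdescGWI}, and these $n$ point conditions together with the $y$-coordinate conditions on fixed ends are exactly the constraints cutting the moduli cone down to a point. Second, I would invoke Lemma~\ref{lem-gammaminuspoint}: every connected component of $\Gamma$ minus the marked ends is rational and contains exactly one non-fixed end. This rationality is crucial -- it tells us that within each such component there are no hidden cycles to worry about (all cycles are visible by Proposition~\ref{lem-cyclesvisible}), and it bounds the combinatorics of vertices away from the marked ends. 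Third, I would do the local count: on a rational component with a fixed number of fixed ends and one free end, the balancing condition plus the constraint that all fixed $y$-coordinates are already pinned down means the dimension of deformations of that component (modulo the overall translation used up by the adjacent marked point) equals $\#\{\text{edges}\} - (\text{number of independent length constraints})$, and writing this as a sum $\sum_V(3-\val(V)) + (\text{genus terms})$ over the vertices $V$ of the component shows each term must be $\le 0$; rigidity forces equality, i.e. $\val(V)=3$ and $g(V)=0$ for every vertex not carrying a marked end.

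Concretely, I expect the cleanest route is: assume for contradiction that some vertex $V$ with no adjacent marked end has $\val(V)\ge 4$ or $g(V)\ge 1$. Then the local contribution of $V$ to the expected dimension formula is $\val(V)-3+ (\text{something} \ge 0 \text{ from } g(V)) \ge 1$. Summing the formula over all vertices, and using that the marked vertices $V_j$ contribute exactly their prescribed valence (so those terms are already accounted for in the $-\sum k_j$ part), one finds that the total expected dimension is strictly negative unless... -- and here one needs that a negative expected dimension for a non-superabundant type means the type is empty, contradicting the existence of $(\Gamma,f)$. So in fact the argument should be phrased as: non-superabundance (Corollary~\ref{cor-notsuperabundant}) gives that the actual dimension equals the expected dimension; rigidity gives that the actual dimension is $0$; the dimension formula then reads as a sum of non-negative local terms equal to $0$; hence each local term vanishes; hence $\val(V)=3$, $g(V)=0$ at every vertex not adjacent to a marked end.

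The main obstacle I anticipate is making precise the claim that the excess-dimension formula splits as a sum of \emph{manifestly non-negative} local contributions once the marked-point and fixed-end constraints are imposed. The raw formula $\#\{\text{ends}\}+b^1(\Gamma)-1-\sum k_i - \sum_V(\val(V)-3)$ is not obviously a sum of non-negative terms -- one has to use Lemma~\ref{lem-gammaminuspoint} to break $\Gamma$ into rational pieces, track how many independent conditions each point $p_j$ and each fixed $y$-coordinate imposes on its piece, and check that after this accounting each vertex away from the markings genuinely costs at least $\val(V)-3 \ge 1$ in lost dimension with no compensating gain. This is essentially the genus-$0$ argument of Mikhalkin/Markwig--Rau adapted to the floor-decomposed higher-genus setting, where Corollary~\ref{cor-notsuperabundant} is precisely what licenses the adaptation; I would cite~\cite{Mi03,MR08,FM09} for the analogous local computations rather than redoing them in full.
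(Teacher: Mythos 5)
Your proposal follows essentially the same line as the paper's proof: invoke Corollary~\ref{cor-notsuperabundant} so that the deformation cone has the expected dimension, translate rigidity (plus the need to satisfy the $2n$ point conditions and $n_1$ fixed $y$-coordinate conditions) into an equality that forces each unmarked vertex's contribution to the dimension count to vanish, and conclude trivalence and genus zero. The one genuine difference is that you propose to route the argument through Lemma~\ref{lem-gammaminuspoint}, decomposing $\Gamma$ into rational pieces and doing a vertex-by-vertex local count, whereas the paper works globally: it computes the number of bounded edges by an Euler characteristic formula, writes the dimension of the deformation space as $2 + \#\{\mathrm{edges}\} - 2\cdot\#\{\mathrm{visible\ cycles}\}$, substitutes the dimension constraint from Notation~\ref{not-delta}, and separates out $\sum_{V'}(\val(V')-3)$ (plus the genus defect) using the valency condition $\val(V_i)=k_i+3-g_i$ at marked vertices. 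The global route avoids the bookkeeping obstacle you flag at the end — one never has to argue how many independent conditions each point imposes on each piece or worry about how the pieces glue; the single inequality $2n+n_1 \le 2n+n_1 - \sum_{V'}\bigl[g_{V'}+\val(V')-3\bigr]$ does all the work at once. So the decomposition via Lemma~\ref{lem-gammaminuspoint} is not needed here, and dropping it both shortens the proof and removes exactly the step you identify as delicate.
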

\begin{proof}

Let $(\Gamma,f)$ be a rigid floor-decomposed tropical stable map.
Assume the marked points with $\psi$-conditions $k_1,\ldots,k_n$ are at vertices of genus $g_1,\ldots,g_n$, and accordingly, of valence $k_i+3-g_i$.


By Corollary \ref{cor-notsuperabundant}, $(\Gamma,f)$ is not superabundant. The number of edges in the graph $\Gamma$ is 
\[
n+n_1+n_2+2a-3+3(g-g_1-\ldots-g_n)-\sum_V (\val(V)-3),
\]
which follows from an Euler characteristic computation. Assume that of the $g-g_1-\ldots-g_n$ visible cycles, $g'$ are flat cycles as in Proposition \ref{lem-cyclesvisible} (C) and for $g''$, their edges span $\mathbb{R}^2$. Note that two vertices connected by $m$ multiple edges give $m-1$ flat cycles.

The space of deformations of $(\Gamma,f)$ has dimension: 
\begin{align*}  &  2+ \#\{edges \}  - 2\cdot g'' - g'  \\=&n+n_1+n_2+2a-1+3(g-g_1-\ldots-g_n)-\sum_V (\val(V)-3) -2(g''+g)'+g' \\=&n+n_1+n_2+2a-1+(g-g_1-\ldots-g_n)-\sum_V (\val(V)-3) +g' \\=&n+n_1+n+\sum_i (k_i-g_i)+g'-\sum_V (\val(V)-3)\end{align*} by the requirement on the conditions.
Since $\sum_V (\val (V)-3)=\sum_i(k_i-g_i)+g'+\sum_{V'}(\val(V')-3)$ (where now the sum goes over all vertices $V'$ which are not adjacent to one of the marked ends $i$ or to flat cycles) by the valency conditions, and since the $y$-coordinates of $n_1$ ends are fixed and $n$ generic point conditions are satisfied, the dimension has to be at least $n_1+2n$, which can only be satisfied if any vertex besides the ones adjacent to the marked ends, is trivalent and of genus $0$.
\end{proof}

\subsection{Deformations of rigid floor decomposed maps} Let $(\Gamma,f)$ be a floor decomposed, rigid tropical stable map contributing to a stationary descendant invariant. We wish to \rc{express} the virtual multiplicity $m_{(\Gamma,f)}$ attached to such a floor decomposed curve in terms of its vertices. Specifically, each vertex $V$ of $\Gamma$ gives rise to discrete data for a logarithmic stable map. Distributing the stationary conditions, as well as boundary conditions along outgoing edges at $V$, we obtain a candidate for local multiplicities which should comprise $m_{(\Gamma,f)}$. In order to carry out this strategy, we need to understand the non-rigid tropical maps nearby $(\Gamma,f)$. 

\begin{proposition}\label{prop: non-rigid-defs}
Let $(\Gamma,f)$ be a rigid tropical map as above and let $(\Gamma',f')$ be a deformation of it, meeting the same stationary descendant constraints. Then, the image of $f$ coincides with the image of $f'$. 
\end{proposition}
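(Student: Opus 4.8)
The plan is to show that if $(\Gamma', f')$ is any deformation of the rigid floor-decomposed map $(\Gamma, f)$ that still meets the same horizontally stretched stationary descendant constraints, then $f'$ and $f$ have the same image; equivalently, the deformation moves only the metric data (edge lengths) in directions that do not change the image, which forces it to be trivial modulo such directions, contradicting genericity unless the images coincide. First I would recall from Corollary \ref{cor-notsuperabundant} that $(\Gamma,f)$ is non-superabundant, so its combinatorial type is parametrized by a polyhedral cone of the expected dimension; the deformation $(\Gamma',f')$ either has the same combinatorial type as $(\Gamma,f)$ or lies in an adjacent stratum, and in either case the relevant moduli of maps of that type is parametrized by a cone whose dimension is computed exactly as in Lemma \ref{lem-3valent} via the Euler characteristic count. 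The point conditions $p_i$ are generic and the $y$-coordinates of the $n_1$ fixed ends are fixed; imposing these on any contributing type cuts the deformation cone down to a point unless the type of $(\Gamma',f')$ has \emph{strictly larger} expected dimension than that of $(\Gamma,f)$, which by the dimension formula can only happen if $(\Gamma',f')$ has a vertex of \emph{lower} valency (an ``overvalent'' vertex of $(\Gamma,f)$ splitting) not adjacent to a marked end, or a visible cycle that becomes hidden.

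The key steps, in order, are: (1) observe that a floor of $(\Gamma,f)$ is fixed by exactly one stationary constraint (Proposition on floors, already established), so the floor structure of $(\Gamma',f')$ is inherited — the elevator edges of $f'$ are parallel to $(1,0)$ and the constraint distribution among floors is the same, since it is determined by which $p_i$ a floor passes through and the $p_i$ are generic; (2) within each floor, the local picture is a tropical map meeting exactly one point condition in a fixed fiber-direction, together with incidence with the fixed $y$-coordinates of the ends of that floor; rigidity of $(\Gamma,f)$ forces each such local piece to be rigid, hence (being floor-decomposed and of the expected dimension by Lemma \ref{lem-cyclesindependent}) its image is determined; (3) the only remaining freedom is in the lengths of the elevator edges connecting the floors and in cycles spanning pairs of floors — but by Lemma \ref{lem-cyclesindependent} the visible cycles impose linearly independent conditions, so once the images of the floors and the $y$-coordinates of all elevators are pinned down (they are, by the point and end conditions), the elevator lengths are determined up to the constraints, leaving no room to change the image. (4) Conclude that $f'(\Gamma') = f(\Gamma)$ as subsets of $\RR^2$.

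The main obstacle I anticipate is step (2)–(3): ruling out that a deformation degenerates $(\Gamma,f)$ to a map $(\Gamma',f')$ with a \emph{different} image that happens to satisfy the same constraints — for instance, a cycle spanning two floors could in principle ``pivot'' so that one elevator lengthens while another shortens, sweeping out a one-parameter family of images all meeting the point conditions. Excluding this requires using the horizontally stretched hypothesis quantitatively: because $\min_{i \neq j}|x_i - x_j| \gg \max_{i \neq j}|y_i - y_j|$, any such pivot would force an elevator edge to have negative length or would move a floor off its designated point, so the only deformations preserving all constraints are those fixing the image. I expect this to follow from the same upper-triangularity argument used in Lemma \ref{lem-cyclesindependent}, applied now to the full system of point and $y$-coordinate constraints rather than just the cycle-closing equations, but making the bookkeeping airtight — especially tracking which floor each constraint pins and showing the combined constraint matrix is of full rank equal to the deformation-space dimension — is the delicate part.
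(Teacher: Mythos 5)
Your outline identifies the right landscape (floor decomposition, rigidity, horizontally stretched constraints) but it misses the one argument that actually does the work, and the gap is not a matter of bookkeeping.

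The paper's proof reduces, via Lemma \ref{lem-3valent}, to a purely \emph{local} statement at the single vertex $V$ of valency $r = k+3-g$ supporting the marked end: a deformation changing the image of $f$ near $V$ is dual to a subdivision $\Delta_f'$ of the Newton polygon $\Delta_f$ dual to $V$, and it must contain a vertex $V'$ (the new carrier of the marked end) with $\val(V') - 3 + g(V') = k$. The proof then shows, by counting how many edges of a polygon in the subdivision can be edges of $\Delta_f$, how many can be ``new,'' and how many interior lattice points can remain inside one cell, that in every case $\val(V') - 3 + g(V') < k$ --- once without parallelograms (every interior lattice point used either opens a visible cycle elsewhere or loses a side), and once with parallelograms (a crossing steals two sides from any one cell). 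This combinatorial inequality on Newton subdivisions is the heart of the argument, and your proposal has no counterpart to it.

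Your dimension-count strategy cannot substitute. Even granting that you control the deformation cone of any adjacent combinatorial type, the conclusion you could draw is about the \emph{dimension} of the locus of maps of that type meeting the constraints, not about whether the image changes. A priori there could be a positive-dimensional family of maps of a degenerate type through the same points whose image differs --- such a family would have vanishing contribution but would still falsify the proposition, which asserts an identity of images, not of multiplicities. What rules this out is precisely that the descendant valency condition cannot be recovered after subdividing, and that is a statement about lattice polygons, not about rank of a constraint matrix. The ``pivot'' worry you flag in your final paragraph is, by contrast, already disposed of by the earlier lemmas: away from the marked vertices the map is trivalent and non-superabundant, so the point and end conditions rigidify those parts; the genuinely hard case is the local one at the marked vertex, and it is exactly the case your proposal leaves open.
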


\begin{proof}
As a consequence of Proposition~\ref{lem-cyclesvisible} in the previous section, any rigid curve $(\Gamma,f)$ must be trivalent and of genus $0$ away from its marked ends and flat horizontal cycles. Moreover, if $W$ is a vertex of $\Gamma$ without a marking and with high valency, then the image curve at $f(W)$ is still trivalent. We will show that locally near a vertex of $\Gamma$, no deformation that changes the image of $f$ can meet the stationary descendant conditions. 

In the first instance, examine a vertex $W$ without a marking but with high valency as above. Then $W$ is connected by a flat horizontal cycle to a vertex $V$ that does carry a descendant. 

Assume that $\Gamma$ has a single vertex $V$ of genus $g$, supporting a marked point, and having valency $r$. Let $\ell$ be the power of the descendant attached to the marking. Then we have the equality $r-3+g = \ell$. 

The image of $f$ is dual to a Newton polygon $\Delta_f$ with at most $r$ sides and at least $g$ interior lattice points. The deformations of $(\Gamma,f)$ that change the image correspond to a subdivision $\Delta_f'$ of the Newton polygon $\Delta_f$. Recall that the $2$-dimensional polygons contained in the subdivision $\Delta_f'$ are dual to the vertices in the deformed tropical (image) curve. Moreover, the genus of each such vertex is equal to the number of interior lattice points. In order to meet the descendant condition,  we must produce a subdivision $\Delta_f'$ such that for some vertex $V'$ dual to a polygon in the subdivision, we achieve the equality
\[
\mathrm{val}(V')-3+g(V') = \ell.
\]

The vertices of the polygons in $\Delta_f'$ can include the interior lattice points of $\Delta_f$. We first deal with the case when there are no parallelograms in the subdivision. Then any interior point that is used produces a visible cycle in the image. Assume $g_1$ interior points are used in constructing $\Delta_f'$. In the resulting tropical curve, any vertex $V'$ can have genus at most $g-g_1$. 
A polygon in the subdivision can have at most $g_1+1$ edges that are not edges of $\Delta_f'$, i.e. ``new edges'' that appear in the subdivision. Furthermore, any polygon in the subdivision can have at most $r-2$ boundary edges of $\Delta_f$. Thus, we see that
\[
(r-2+g_1+1)-3+(g-g_1)<\ell
\] 
and the deformation cannot meet the descendant condition. 

Now assume that there is a parallelogram in $\Delta_f'$. Here, it is not necessarily the case that the genus of the deformed curve is smaller. However, the parallelogram is dual to two edges of the tropical curve crossing, whose dual edges must be adjacent to different polygons in the subdivision. Thus, a vertex can have at most $r-3$ edges dual to boundary edges of $\Delta_f$, and we again have the inequality
\[
(r-3+g_1+1)-3+(g-g_1+1)<\ell.
\]
Thus, we see that any parallelogram in $\Delta_f'$ reduces the valency, while polytopes in $\Delta_f'$ other than parallelograms reduce the genus. In all cases, it is impossible that the deformed curve continues to meet the descendant condition. Thus, the only deformations of $(\Gamma,f)$ must leave the image unchanged, and the proposition follows. 
\end{proof}


\subsection{Tweaking the logarithmic moduli space} The consequence of Proposition~\ref{prop: non-rigid-defs} is that \textit{all} tropical stable maps -- not just rigid ones -- that contribute to a stationary descendant Gromov--Witten invariant have the same image as a rigid map. Thus, there are only finitely many images of tropical curves that we need consider in the enumerative problem. This allows us to use an elegant idea developed by Gross--Pandharipande--Siebert~\cite{GPS}, building on earlier work of Nishinou--Siebert~\cite{NS06}, to relate the logarithmic stationary descendants of the Hirzebruch surface $\mathbb F_k$ to the stationary descendants of an \textit{open} geometry obtained by first degenerating $\mathbb F_k$ to accommodate all tropical curves, and then deleting its codimension $2$ strata. The resulting moduli space will not be complete, but retains sufficient properness to support the stationary descendants because of the results of the previous section. This in turn will give us access to the degeneration formula. 

Fix the numerical data defining a tropical stationary descendant Gromov--Witten invariant. Choose floor decomposed points, and let $(\Gamma_1,f_1),\ldots, (\Gamma_s,f_s)$ be the rigid tropical curves meeting these conditions. Let $\mathscr P$ be a polyhedral decomposition of $\RR^2$ such that for each $i$, the image
\[
f_i: \Gamma_i\to \RR^2
\]
is contained in the $1$-skeleton of $\mathscr P$. 

\begin{proposition}
Let $f':\Gamma'\to \RR^2$ be any tropical stable map meeting the given stationary descendant constraints as above. Then $f'$ factors through the one skeleton  of $\mathscr P$. 
\end{proposition}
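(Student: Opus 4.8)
The plan is to combine Proposition~\ref{prop: non-rigid-defs} with the fact that $\mathscr P$ was chosen to contain the images of \emph{all} rigid tropical stable maps meeting the constraints. First I would recall from Proposition~\ref{prop: non-rigid-defs} that if $(\Gamma',f')$ is any tropical stable map (rigid or not) satisfying the given stationary descendant constraints, then $f'$ is a deformation of some rigid tropical stable map $(\Gamma_i,f_i)$ meeting the same constraints, and moreover $f'(\Gamma') = f_i(\Gamma_i)$ as subsets of $\RR^2$. To make this precise, one observes that the combinatorial types of tropical stable maps meeting the constraints form a finite poset under degeneration (since the tropical moduli space has finitely many cones), and every such type degenerates to, or is a specialization within the cone of, a rigid type; Proposition~\ref{prop: non-rigid-defs} then says the image is constant along each such cone down to its rigid locus. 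Hence $f'(\Gamma')$ is literally one of the finitely many images $f_1(\Gamma_1),\ldots, f_s(\Gamma_s)$.

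Next I would invoke the choice of $\mathscr P$: by construction each $f_i(\Gamma_i)$ is contained in the $1$-skeleton of $\mathscr P$. Since $f'(\Gamma')$ equals one of these images, $f'(\Gamma')$ is contained in the $1$-skeleton of $\mathscr P$ as well. It remains to upgrade "the image lies in the $1$-skeleton" to "the map factors through the $1$-skeleton", i.e.\ that each edge of $\Gamma'$ maps into a single edge (or vertex) of $\mathscr P$ and each vertex of $\Gamma'$ maps to a vertex of $\mathscr P$. For a tropical stable map, each edge maps to a line segment of a fixed rational direction; if its image lies in the $1$-skeleton of $\mathscr P$ but crosses a vertex of $\mathscr P$ transversally, one simply refines the edge by inserting a $2$-valent vertex there — this does not change the map, only the combinatorial type, and after this finite refinement the map genuinely factors through the $1$-skeleton. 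Thus, up to this harmless subdivision, $f'$ factors through the one-skeleton of $\mathscr P$, which is the assertion.

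The main obstacle, and the only place any real work is needed, is the first step: carefully justifying that \emph{every} (not necessarily rigid) tropical stable map meeting the constraints has the same image as one of the finitely many rigid ones. This is precisely the content of Proposition~\ref{prop: non-rigid-defs}, but one must check that its hypotheses apply to $(\Gamma',f')$ — in particular that $(\Gamma',f')$ is a deformation of a rigid map of the same combinatorial type, which follows because the closure of the cone parametrizing a given combinatorial type always contains a rigid map (a $0$-dimensional stratum, after fixing the point and $y$-coordinate constraints), and Corollary~\ref{cor-notsuperabundant} together with Lemma~\ref{lem-3valent} guarantees the relevant deformation behavior is governed by the local analysis at the high-valence or positive-genus vertices, exactly as treated in the proof of Proposition~\ref{prop: non-rigid-defs}. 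Once this reduction is in hand, the remainder is bookkeeping about polyhedral refinement.
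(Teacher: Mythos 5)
Your proof is correct and takes essentially the same approach as the paper, which simply states that the proposition is a restatement of Proposition~\ref{prop: non-rigid-defs}. Your elaboration — passing from an arbitrary map meeting the constraints to a rigid map in the closure of its cone, invoking Proposition~\ref{prop: non-rigid-defs} to conclude the images agree, and then noting that the image being in the $1$-skeleton of $\mathscr P$ gives factorization after a harmless refinement by $2$-valent vertices — is precisely the content the paper compresses into one sentence.
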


\begin{proof}
This is a restatement of Proposition~\ref{prop: non-rigid-defs} in the previous section. 
\end{proof}

Let $\mathscr X$ be the special fiber of the toric degeneration associated to $\mathscr P$, and let $\mathscr X^\circ$ denote the complement of the codimension $2$ strata in the degeneration. We consider $\mathscr X^\circ$ as a logarithmic scheme over $\spec(\NN\to \CC)$ equipped with its divisorial logarithmic structure. 

Let $(\Gamma,f)$ be rigid and let $\overline{M}_{(\Gamma,f)}(\mathscr X^\circ)$ denote the space of logarithmic stable maps to $\mathscr X^\circ$ equip-ped with a marking by $(\Gamma,f)$ in the sense of Section~\ref{sec: virtual-multiplicity}. Choose logarithmic lifts $\underline p^\dagger =(p_1^\dagger,\ldots, p_n^\dagger)$ of the points $(p_1,\ldots, p_n)\in (\RR^2)^n$. Let $\overline{M}_{(\Gamma,f)}(\mathscr X^\circ,\underline p^\dagger)$ be the moduli space of logarithmic stable maps passing through these points.

 This space admits a virtual fundamental class $[\overline{M}_{(\Gamma,f)}(\mathscr X^\circ,\underline p^\dagger)]^{\mathsf{vir}}$ in Borel--Moore homology. Moreover, the stationary descendant logarithmic invariant is rationally equivalent to a class supported on $\overline{M}_{(\Gamma,f)}(\mathscr X^\circ,\underline p^\dagger)$. 

We view the space $\mathscr X^\circ$ as a logarithmically smooth scheme with a rank $1$ logarithmic structure, since the higher rank loci are the (at least) threefold intersections of components which have been removed. 

The moduli space of maps to $\mathscr X^\circ$ includes into the moduli space of maps to $\mathscr X$ by composition. By the arguments in previous section, the difference
\[
\left(\prod_{j=1}^n\psi_j^{k_j}\cap [\overline{M}_{(\Gamma,f)}(\mathscr X^\circ,\underline p^\dagger)]^{\mathsf{vir}}\right)
\]
is equal to the stationary descendant invariant in question. Indeed, any logarithmic map meeting the stationary descendant constraints has to have a tropicalization that factors through the one skeleton of $\mathscr P$, and thus lie in the open target. Thus, the invariant coming from the expanded moduli space has a well-defined degree. 

%

\subsection{Vertex multiplicities} Since the degeneration $\mathscr X^\circ$ has no triple points, we may now appeal to the degeneration formula for smooth pair geometries. Specifically, the virtual class of $\overline{M}^{\mathsf{}}_{(\Gamma,f)}(\mathscr X^\circ,\underline p^\dagger)$ satisfies a degeneration formula analogous to the one used in the previous section, and we use the one proved by Kim--Lho--Ruddat~\cite{KLR}. This allows us to write the virtual multiplicity $m_{(\Gamma,f)}$ in terms of the vertices of $(\Gamma,f)$. 

Let $(\Gamma,f)$ be a rigid tropical stable map as above, contributing to a stationary descendant Gromov--Witten invariant. Orient the edges of $\Gamma$ minus the marked ends in each component towards the unique  non-fixed end.

\noindent
{\bf From oriented edges to boundary incidence conditions.} Locally around each vertex $V$ of $\Gamma$, the directions of the adjacent flags define a Newton fan $\delta_V$. We let $\delta_\phi$ be the subset given by all entries of $\delta_V$ corresponding to edges which are oriented towards $V$, and $\delta_\mu$ consist of the vectors in $\delta_V$ oriented away from $V$. \\

Let $\overline M_V$ is the moduli space of maps to the open surface $X_V^\circ$ determined by the local picture near $V$, such that if an edge is incoming, we consider maps that pass through a pre-determined point of the corresponding boundary curve.  

More precisely, by unpacking the degeneration formula in~\cite{KLR}, we see that at a vertex $V$ there is a corresponding moduli space of maps $f:C_V\to X_V$, where the surface $X_V$ is determined by the Newton fan $\delta_V$ and the genus, marked points, and contact orders are given by the star of $V$ in $(\Gamma,f)$. We consider the open toric surface $X_V^\circ$ obtained by deleting the torus fixed point. Moreover, let $\overline M_V$ be the moduli space of logarithmic maps to $X_V^\circ$, with the boundary incidences specified by the orientation as follows. If an edge is oriented towards $V$, we consider maps passing through a fixed point of the corresponding boundary curve. 

This moduli space of maps admits a virtual fundamental class $[\overline M_V]^{\mathsf{vir}}$. 

\begin{definition}\label{def: local-vertex-multiplicity}
Define the \textbf{local vertex multiplicity at $V$} to be 
$$\mult_V(\Gamma,f)=\langle \tau_{k_i}(pt)\rangle_{\delta_\phi\cup \delta_\mu,g_V} := \int_{[\overline M_V]^{\mathsf{vir}}} \psi_i^{k_i} ev^\star([pt]). 
$$ 
if the marked end $i$ is adjacent to $V$ and
\[\mult_V(\Gamma,f)=\langle \rangle_{\delta_\phi\cup \delta_\mu ,g_V}:=\mathsf{deg}([\overline M_V]^{\mathsf{vir}})
\]
otherwise.
Here $g_V$ denotes the genus of $\Gamma$ at $V$.
\end{definition}
Since we require the marked ends to meet distinct points, there cannot be more than one end adjacent to a vertex $V$.

Note that the arguments in the previous section guarantee that the possible degenerations of the local map near $V$ that satisfy the valency and incidence conditions are contained in the a priori non-compact space of maps $\overline M_V$. 

Given a an edge $e$ of a rigid tropical curve $(\Gamma,f)$, we let $w(e)$ denote the expansion factor along the edge.


\begin{remark}\label{dimcount}
The only possibly non-vanishing local vertex multiplicities happen when the virtual dimension of the moduli space of logarithmic stable maps equals $0$ in the case of an unmarked vertex, and $k_i+2$ for a vertex adjacent to the $i$-th mark. Let $V$ denote a vertex whose star gives the Newton fan $\delta$.  Let $\delta_\phi\cup \delta_\mu = \delta$ be an arbitrary two-part partition of $\delta$, and let $M_V$ the moduli space of logarithmic stable maps identified by this data. The virtual dimension  is:
\begin{equation}
\mathsf{virdim}(M_V) =  g - 1 + \val(v) - \ell(\phi)
\end{equation}

If $V$ is an unmarked vertex, using  $val(V) = \ell(\phi) +\ell(\mu)$, it follows that for  the virtual dimension of $M_V$ to equal $0$,
$$
\ell(\mu) = 1 - g.
$$
We showed in Lemma \ref{lem-gammaminuspoint} that unmarked vertices are rational, and therefore   $\ell(\mu) = 1$.

If $v$ is adjacent to the $i$-th marked leg, recall that $\val (v) = k_i+3 -g$. Therefore, for the virtual dimension of $M_v$ to be $k_i+2$ it must be that $\ell(\phi) = 0$.

\end{remark}

\begin{proposition}
The virtual multiplicity $m_{(\Gamma,f)}$ can be written in terms of the local vertex multiplicities of $(\Gamma,f)$. Specifically, 
\[
m_{(\Gamma,f)} = \frac{1}{|\mathsf{Aut}(f)|} \cdot \prod_{e:\mathsf{C.E.}} w(e)\cdot \prod_V \mult_V(\Gamma,f).
\]
where $\mathsf{C.E.}$ stands for compact edge. 
\end{proposition}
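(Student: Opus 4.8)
The plan is to prove this by applying the degeneration formula of Q.~Chen~\cite{Che14} for the virtual class of $\overline{M}^{\mathsf{exp}}_{(\Gamma,f)}(\mathscr X^\circ,\underline p^\dagger)$, in exactly the same spirit as the proof of Theorem~\ref{thm-flooralg}, but now using the tropical (logarithmic) setup rather than the relative one. First I would recall the degeneration picture: the polyhedral decomposition $\mathscr P$ determines a toric degeneration whose special fiber $\mathscr X^\circ$ is a union of open toric surfaces glued along open one-dimensional strata, and the combinatorial type of this arrangement is precisely encoded by the tropical curve $(\Gamma,f)$ --- each vertex $V$ of $\Gamma$ corresponds to an open surface $X_V^\circ$ and each bounded edge $e$ to a gluing locus. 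Since, by Proposition~\ref{prop: non-rigid-defs}, every tropical stable map meeting the stationary descendant constraints has the same image as a rigid map, the marked moduli space $\overline{M}^{\mathsf{exp}}_{(\Gamma,f)}(\mathscr X^\circ,\underline p^\dagger)$ receives its virtual contributions only from configurations compatible with the fixed combinatorial type, so the degeneration formula splits the virtual class into a product over the vertices.

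Next I would run the split. Chen's degeneration formula expresses $[\overline{M}^{\mathsf{exp}}_{(\Gamma,f)}(\mathscr X^\circ,\underline p^\dagger)]^{\mathsf{vir}}$, capped with $\prod_j \psi_j^{k_j}$ and the point constraints, as a sum over gluing data along the bounded edges; but because the source is already rigidified by the marking $(\Gamma,f)$, the only gluing data that survives is the one prescribed by the expansion factors $w(e)$ on the edges of $\Gamma$. Each gluing node along an edge $e$ contributes a factor of $w(e)$ coming from the multiplicity of the fiber product of evaluation maps at the relative divisor (the Künneth/diagonal factor), and the automorphism factor $\frac{1}{|\mathsf{Aut}(f)|}$ arises from identifying the labelled count with the count of maps of fixed topological type, exactly as in the remark following Theorem~\ref{degfor}. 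The contribution of each open surface $X_V^\circ$ is by definition $\mult_V(\Gamma,f)$: when $V$ carries a marked end one integrates $\psi_i^{k_i}ev^\star([pt])$ against $[\overline{M}_V]^{\mathsf{vir}}$, and otherwise one simply takes $\mathsf{deg}([\overline M_V]^{\mathsf{vir}})$. Orienting the edges towards the unique non-fixed end (Lemma~\ref{lem-gammaminuspoint}) assigns the boundary point conditions consistently to the ``incoming'' side, which matches the way $\delta_\phi$ and $\delta_\mu$ were defined and guarantees that the distributed local problems are exactly the moduli spaces $\overline M_V$. Assembling these, one gets
\[
m_{(\Gamma,f)} = \frac{1}{|\mathsf{Aut}(f)|}\cdot \prod_{e:\mathsf{C.E.}} w(e)\cdot \prod_V \mult_V(\Gamma,f),
\]
since by Definition~\ref{def: virtual-multiplicity} $m_{(\Gamma,f)}$ is the integral of $\prod_j\psi_j^{k_j}$ against $[\overline M_{(\Gamma,f)}(\underline p^\dagger)]^{\mathsf{vir}}$, and the preceding subsection identifies this with the analogous integral on the expanded, open model.

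One bookkeeping point deserves care: the local spaces $\overline M_V$ are a priori non-compact (maps to $X_V^\circ$, the open toric surface with the fixed point deleted), so one must check that the relevant virtual integrals are still well-defined. This is handled by the dimension count of Remark~\ref{dimcount} together with Proposition~\ref{prop: non-rigid-defs}: the only degenerations of a local map at $V$ that could meet the valency and incidence conditions stay inside $\overline M_V$, so the descendant integral at each vertex has a well-defined degree, and the global properness needed to even state the degeneration formula is supplied by the arguments of the previous subsection (properness of the stationary descendant class on the open geometry). I expect the main obstacle to be precisely this matching of the combinatorial edge/vertex data with Chen's formula and verifying that no extra global automorphisms or gluing multiplicities intervene beyond $\frac{1}{|\mathsf{Aut}(f)|}$ and $\prod_e w(e)$ --- in other words, the same ``careful but cumbersome bookkeeping'' acknowledged in the proof of Theorem~\ref{thm-flooralg}. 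As there, I would outline the argument and leave the most tedious verifications to the reader.
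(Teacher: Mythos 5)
Your proposal follows exactly the paper's line of argument: a direct application of Chen's degeneration formula on the expanded open model $\overline{M}^{\mathsf{exp}}_{(\Gamma,f)}(\mathscr X^\circ,\underline p^\dagger)$, with the orientation (justified by the vertex-wise virtual dimension count of Remark~\ref{dimcount}) forcing the unique nonzero distribution of boundary conditions, the factors $w(e)$ arising as gluing multiplicities, and $1/|\mathsf{Aut}(f)|$ from passing to the rigidified moduli space. You spell out more of the bookkeeping that the paper explicitly leaves to the reader, but the approach is the same.
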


\begin{proof}
The proof is a standard application of the degeneration formula~\cite{Che14,KLR}, and analogous statements can be found in~\cite{MR16} and Theorem~\ref{thm-flooralg} from earlier in this paper. As noted above, the orientation chosen is the only way in which to obtain a nonzero invariant, since the virtual dimension at each vertex must be zero to obtain a nonzero contribution in the degeneration formula. The product of edge expansion factors comes directly from the statement of the formula, while the automorphism factor arises from passing to a rigidified moduli space where the maps can be uniquely decomposed into their constituent components. We leave the bookkeeping to the reader.
\end{proof}

\begin{remark}\label{rem-factorsoftropmultareas}
Assume a floor-decomposed rigid tropical stable map $(\Gamma,f)$ contributes with non-zero multiplicity to a tropical descendant invariant $\langle ({\underline{\phi}}^-,{\underline{\mu}}^-)| \tau_{k_1}(p_1)\ldots\tau_{k_n}(p_n)|({\underline{\phi}}^+,{\underline{\mu}}^+)\rangle_{g}^{\trop}$ as in Definition \ref{def-tropdescGWI}. 
Let $V_1,\ldots, V_\ell$ be the vertices that do not support a marked point and are not adjacent to multiple edges forming flat cycles. By Lemma \ref{lem-3valent}, these vertices are trivalent. Then the factor 
\[
\prod_{e:\mathsf{C.E}} w(e)\cdot \prod_{i=1}^\ell \mult_{V_i}(\Gamma,f)
\]
appearing in the multiplicity $m_{(\Gamma,f)}$
is equal to the product of all (normalized) areas of triangles dual to the trivalent non-marked vertices in the dual subdivision, divided by the weights of fixed ends. Indeed, the Gromov--Witten invariant at trivalent vertices is $1$, and the gluing factors and edge weights together contribute the product of areas above, as follows from~\cite{GM053, MR08, Mar06}. 
\end{remark}

\subsection{Logarithmic floor multiplicity}

We keep the definition of a floor diagram used in the previous section but we change the multiplicity from a relative invariant to the corresponding logarithmic invariant. 

\begin{warning}
In what follows, to avoid overburdening the notation, we repurpose the symbols from the previous section, replacing the relative multiplicities with their logarithmic multiplicities.
\end{warning}

\begin{definition}\label{def-floormult-log}
Given  a floor diagram for $\mathbb{F}_k$, let $V$ be a vertex of genus $g_V$, size $s_V$ and with $\psi$-power $k_V$. Let $({\underline{\phi}}_V,{\underline{\mu}}_V)$ denote the expansion factors of the flags adjacent to $V$; the first sequence encodes the normal half edges, the second the thickened ones. We define the \textbf{logarithmic multiplicity $\mult(V)$} of $V$ to be the one-point stationary logarithmic descendant invariant 
$$\mult(V)=\langle ({\underline{\phi}}^-_V,{\underline{\mu}}^-_V)|\tau_{k_V}(pt)|({\underline{\phi}}^+_V,{\underline{\mu}}^+_V)\rangle^{\mathsf{log}}_{g_V}. $$
\end{definition}

\begin{definition}[Floor multiplicity for logarithmic geometries]\label{def-floorcount-log}
Fix discrete data as in Notation \ref{not-delta}.
We  define: $$\langle ({\underline{\phi}}^-,{\underline{\mu}}^-)|\tau_{k_1}(pt)\ldots\tau_{k_n}(pt)|({\underline{\phi}}^+,{\underline{\mu}}^+)\rangle_{g}^{\floor}$$
to be the weighted count of floor diagrams $D$ for $\mathbb{F}_k$ of degree $({\underline{\phi}},{\underline{\mu}})$ and genus $g$, with $n$ vertices with $\psi$-powers $k_1,\ldots,k_n$, such that $a$ equals the sum of all sizes of vertices, $a=\sum_{V=1}^n s_V$. 

Each floor diagram is counted with multiplicity $$\mult(D)=
\prod_{e \in C.E.} w(e)\cdot \prod_V \mult(V),$$
 where the second product is over the set $C.E.$ of  compact edges and $w(e)$ denotes their expansion factors; the third product ranges over all vertices $V$ and $\mult(V)$ denotes their multiplicities as in  Definition \ref{def-floormult-log}.
\end{definition}

\begin{theorem}\label{thm-floortrop}

Fixing all discrete invariants as in Notation \ref{not-delta}, the weighted count of floor diagrams equals the tropical descendant log Gromov--Witten invariant, i.e.\ we have
\begin{equation} \langle ({\underline{\phi}}^-,{\underline{\mu}}^-)|\tau_{k_1}(pt)\ldots\tau_{k_n}(pt)|({\underline{\phi}}^+,{\underline{\mu}}^+)\rangle_{g}^{\floor}= \langle ({\underline{\phi}}^-,{\underline{\mu}}^-)|\tau_{k_1}(pt)\ldots\tau_{k_n}(pt)|({\underline{\phi}}^+,{\underline{\mu}}^+)\rangle_{g}^{\trop}.
\end{equation}
\end{theorem}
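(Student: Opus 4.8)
The plan is to establish a weight-preserving bijection between floor diagrams $D$ of degree $({\underline{\phi}},{\underline{\mu}})$ and genus $g$ with $\psi$-powers $k_1,\ldots,k_n$ (with $a = \sum_V s_V$), and the rigid floor-decomposed tropical stable maps $(\Gamma,f)$ contributing to the right-hand side, using the fact that the tropical point conditions are taken in horizontally stretched position. First I would recall, from the results of Section \ref{troflo} already in place --- Lemma \ref{lem-3valent}, Corollary \ref{cor-notsuperabundant}, Proposition \ref{prop: non-rigid-defs}, and the Brugall\'e--Fomin--Mikhalkin analysis of floor decompositions --- that every contributing $(\Gamma,f)$ is floor decomposed, non-superabundant, trivalent and genus $0$ away from marked ends, and that each floor meets and is fixed by exactly one stationary descendant constraint. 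The bijection then assigns to $(\Gamma,f)$ the floor diagram obtained by contracting each floor to a vertex $V$ (with $g_V$ the genus of $\Gamma$ at that floor, $s_V$ the integral width/size of the floor, and $k_V = k_i$ the descendant power of the marked point it supports), keeping the elevator edges as compact edges with their expansion factors, thickening the half-edges adjacent to the marked point as prescribed by Definition \ref{def-floor}, and recording the horizontal ends as thick/non-thick ends according to whether the corresponding tangency is at a non-fixed or fixed point. The constraints (1)--(7) in Definition \ref{def-floor} must then be checked: the key ones are (3), which follows from the valency condition $\val(V) = k_V + 3 - g_V$ at the marked vertex together with the balancing of the floor projected onto the vertical direction, and (5), which is the statement that the vertical displacement of a floor of size $s_V$ in $\mathbb F_k$ is $-k s_V$, following from the shape of the dual sliced Newton polygon (Figure \ref{fig-slice}).

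Surjectivity of the assignment: given a floor diagram $D$, one reconstructs a floor-decomposed tropical map by realizing each vertex $V$ as a standard floor of size $s_V$ inside a horizontal strip, using the horizontally stretched point $p_i$ to pin the marked point, attaching elevator edges with the prescribed expansion factors and lengths determined by the $y$-coordinates of the points, and invoking Lemma \ref{lem-cyclesindependent} to see that the resulting combinatorial type is rigid and non-superabundant (hence actually occurs in the count). Injectivity follows because a floor-decomposed rigid map is determined up to the image by its floor/elevator combinatorics, and the image is determined (up to the fixed deformations, which by Proposition \ref{prop: non-rigid-defs} leave the contribution unchanged) by that combinatorial data plus the fixed points.

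For the weight comparison, I would invoke the proposition of Subsection ``Vertex multiplicities'': for a rigid floor-decomposed $(\Gamma,f)$ we have
\[
m_{(\Gamma,f)} = \frac{1}{|\mathsf{Aut}(f)|}\cdot \prod_{e:\mathsf{C.E.}} w(e)\cdot \prod_V \mult_V(\Gamma,f),
\]
and I would match this term by term with $\mult(D) = \prod_{e\in C.E.} w(e)\cdot \prod_V \mult(V)$ from Definition \ref{def-floorcount-log}. The compact-edge factors agree on the nose once one checks the contraction identifies compact edges of $D$ with elevator edges of $\Gamma$ preserving $w$. The vertex factors agree because the local moduli space $\overline M_V$ at a floor-vertex $V$, with its boundary incidences dictated by the orientation towards the non-fixed end, is exactly the one-point stationary logarithmic descendant space of $\mathbb F_k$ computing $\langle ({\underline{\phi}}^-_V,{\underline{\mu}}^-_V)|\tau_{k_V}(pt)|({\underline{\phi}}^+_V,{\underline{\mu}}^+_V)\rangle^{\mathsf{log}}_{g_V}$ --- here one uses Remark \ref{dimcount} to see that the orientation is forced (so $\ell(\phi)=0$ at a marked vertex, i.e. all incoming tangencies are at fixed points, matching the thickening convention) and that the size $s_V$ is absorbed into the Newton fan $\delta_V$ of the local surface. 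Finally, the $\tfrac{1}{|\mathsf{Aut}(f)|}$ appearing in $m_{(\Gamma,f)}$ is precisely the automorphism factor already incorporated in the tropical multiplicity $\tfrac{1}{\mathrm{Aut}(f)}m_{(\Gamma,f)}$ of Definition \ref{def-tropdescGWI}, and floor diagrams as defined here have labelled ends and ordered vertices, so there is no residual automorphism discrepancy on the floor-diagram side.

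The main obstacle I expect is the bookkeeping in constraints (3) and (5) of Definition \ref{def-floor} and, more substantively, verifying that the local vertex multiplicity $\mult_V(\Gamma,f)$ of Definition \ref{def: local-vertex-multiplicity} literally equals the one-point logarithmic invariant $\mult(V)$ of Definition \ref{def-floormult-log} --- i.e. that deleting the torus-fixed point from $X_V$ and passing to relative maps to expansions does not change the descendant integral. This is exactly the content of the ``tweaking the logarithmic moduli space'' subsection applied locally (properness is retained because of the dimension vanishing in Remark \ref{dimcount} and Proposition \ref{lem-cyclesvisible}), so I would cite that discussion together with \cite{AMW12, Kim08, Che14} rather than re-prove it; the remaining work is the purely combinatorial verification that the contraction $(\Gamma,f)\mapsto D$ is a bijection respecting all seven decorations, which is routine but tedious and I would present it as a ``left to the reader'' style check modeled on the proof of Theorem \ref{thm-flooralg}.
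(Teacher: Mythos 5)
Your overall strategy — contract floors to vertices, then compare weights — is indeed the paper's approach, but the claim that this yields a \emph{bijection} between rigid floor-decomposed tropical maps and floor diagrams is incorrect, and this is not a minor bookkeeping issue. A floor diagram vertex $V$ records only the triple $(g_V,s_V,k_V)$ together with the adjacent edge and thickening data; it forgets the internal trivalent structure of the tropical floor that was contracted. For $s_V>1$ there are in general several combinatorially distinct rigid tropical floors with the same boundary data, genus, size, and $\psi$-power, and all of them contract to the same vertex of $D$. So Construction~\ref{const-floor} is genuinely many-to-one, and there is no way to ``reconstruct a floor-decomposed tropical map'' from $D$ as you describe under surjectivity, because there is no canonical choice. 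What the paper actually proves (Proposition~\ref{prop-floormult}) is that the \emph{weighted sum} of tropical multiplicities over all preimages of a fixed $D$ equals $\mult(D)$, and the crucial ingredient is Theorem~\ref{thm-corres} applied to each vertex: the floor-diagram vertex multiplicity $\mult(V)$ is by definition a one-point logarithmic descendant invariant, which by the correspondence theorem equals a weighted count of tropical one-floor curves. Your proposal never invokes Theorem~\ref{thm-corres} in this local form, which is the missing idea.

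The weight comparison also fails as stated. You propose to match
\[
m_{(\Gamma,f)} = \frac{1}{|\mathsf{Aut}(f)|}\cdot \prod_{e:\mathsf{C.E.}} w(e)\cdot \prod_V \mult_V(\Gamma,f)
\quad\text{with}\quad
\mult(D)=\prod_{e\in C.E.} w(e)\cdot \prod_V \mult(V)
\]
``term by term,'' asserting that the compact edges of $D$ correspond to the compact edges of $\Gamma$ and that $\mult_V(\Gamma,f) = \mult(V)$. Neither is true: $\Gamma$ has many more compact edges than $D$ (the internal bounded edges of each floor) and many more vertices (the trivalent unmarked ones), and $\mult_V(\Gamma,f)$ in Definition~\ref{def: local-vertex-multiplicity} is attached to a single vertex of $\Gamma$, not to an entire floor. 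The paper handles this via Remark~\ref{rem-factorsoftropmultareas}: the product over unmarked trivalent vertices of their local multiplicities (areas), combined with the weights of the internal bounded edges of each floor, collapses exactly to the product of elevator edge weights of $D$ divided by weights of fixed ends; only after this non-trivial cancellation does the remaining product over the $n$ marked vertices assemble, summed over the finitely many realizing floors, into $\prod_V\mult(V)$. Without that argument, and without replacing ``bijection'' by ``finite-to-one with weights summing correctly,'' the proof does not go through.
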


\begin{proof}
The proof of this theorem is in two parts. Construction \ref{const-floor}  associates a floor diagram to a floor decomposed tropical stable map contributing to $\langle ({\underline{\phi}}^-,{\underline{\mu}}^-)|\tau_{k_1}(pt)\ldots\tau_{k_n}(pt)|({\underline{\phi}}^+,{\underline{\mu}}^+)\rangle_{g}^{\trop}$. 
By Proposition \ref{prop-floormult}, the weighted number of tropical stable maps yielding the same floor diagram $D$ under this procedure equals the multiplicity  $\mult(D)$ from Definition \ref{def-floorcount-log}.
\end{proof}

\begin{remark}
The key step in decomposing the tropical curve multiplicity into vertex terms was the transition from the normal crossings degeneration $\mathscr X$ to the non-proper double point degeneration $\mathscr X^\circ$, which required the vanishing results in this section. Once this has been done, the calculation can be undertaken in various formalisms. One could use the degeneration formula for relative geometries and expanded degenerations, see~\cite{Che14,Kim08, Li02}. Instead, one could avoid the expanded formalism by appealing to~\cite{KLR}. The transition from a normal crossings degeneration to a double point degeneration avoids the complexities of the logarithmic degeneration formula~\cite{R19}. 
\end{remark}

\subsection{Constructing floor diagrams from tropical curves} Let $(\Gamma,f)$ be a rigid floor decomposed tropical stable map contributing to 
\[
\langle ({\underline{\phi}}^-,{\underline{\mu}}^-)|\tau_{k_1}(pt)\ldots\tau_{k_n}(pt)|({\underline{\phi}}^+,{\underline{\mu}}^+)\rangle_{g}^{\trop}.
\] 

Because of the horizontally stretched point conditions, each marked point is either on a horizontal edge (resp.\ \textbf{elevator edge}) of $f(\Gamma)$ (i.e.\ an edge of primitive direction $(1,0)$) or on a part dual to a slice in the Newton subdivision. On each part dual to a slice, there is exactly one marked point.
Consider the preimage in $\Gamma$ under $f$ of a part dual to a slice, this is a subgraph that we call $\Gamma'$. Assume the slice in the Newton polygon has width $s>0$ (i.e.\ in plane coordinates, it is a slice between the lines $\{x=i\}$ and $\{x=i+s\}$ for some $i$). Since the image of $\Gamma'$ is fixed by exactly one point (and conditions on the coordinates of its horizontal edges), $\Gamma'$ consists of only rational connected components.  Furthermore, all but one of these components is just one edge which is mapped horizontally. This connected component (which contains $s$ ends of direction $(0,-1)$ and $s$ ends of direction $(k,1)$) is called a \textbf{floor of size $s$}. We refer to other connected components as horizontal edges passing through the floor. For an example, see Figures \ref{fig-floors1} and \ref{fig-floors2}.

\begin{construction}\label{const-floor}
Let $(\Gamma,f)$ be a non-superabundant floor decomposed tropical stable map contributing to $\langle ({\underline{\phi}}^-,{\underline{\mu}}^-)|\tau_{k_1}(pt)\ldots\tau_{k_n}(pt)|({\underline{\phi}}^+,{\underline{\mu}}^+)\rangle_{g}^{\trop}$. 

We associate a floor diagram $D$ contributing to $\langle ({\underline{\phi}}^-,{\underline{\mu}}^-)|\tau_{k_1}(pt)\ldots\tau_{k_n}(pt)|({\underline{\phi}}^+,{\underline{\mu}}^+)\rangle_{g}^{\floor}$ to $(\Gamma,f)$ by contracting each floor to a vertex; also marked points adjacent to only horizontal edges are considered vertices. The vertices are equipped with:
\begin{itemize}
\item the $\psi$-power $k_i$ of the adjacent marked point $i$, 
\item the size $s_i$ (i.e. the width) of the  dual slice of the Newton polygon for vertices corresponding to a floor; $s_i  = 0$ for marked points on elevators,
\item the genus $g_i$  of the vertex adjacent to the marked end $i$ in the tropical curve.
\end{itemize}
We thicken flags if they come from half-edges of $f(\Gamma)$ which are adjacent to a marked point.
\end{construction}
\begin{proof} We  show that Construction \ref{const-floor} yields a floor diagram of the right degree and genus.
Because of the horizontally stretched point conditions, we obtain a graph $D$ on a linearly ordered vertex set. 

The balancing condition satisfied by $(\Gamma,f)$ implies that the signed sum of expansion factors of edges adjacent to vertex $i$ of the floor diagram equals $-k\cdot s_i$. 

By Lemma \ref{lem-gammaminuspoint}, removing from the subgraph underlying a floor of size $s_i$ the marked end $i$ together with its end vertex yields  connected components each containing at most one of the $2s_i$ ends of direction $(0,-1)$ resp.\ $(k,1)$.
It follows that the valence of the vertex adjacent to the $i$-th mark is $2s_i$ plus one (for the marked end itself) plus the number of adjacent horizontal edges. The latter correspond to the thick flags in the floor diagram $D$. Thus at vertex $i$ of $D$, $(k_i-g_i+3)-1-2s_i$ edges are thickened, as required. Furthermore, each horizontal edge of $\Gamma$ must be fixed, either by a condition on the $y$-coordinates of ends, or by a marked point. It cannot be fixed more than once because of the genericity of the conditions. It follows that every edge of the associated floor diagram $D$ has precisely one thickened flag, as required.
Since all floors of $(\Gamma,f)$ are rational, the genus of $D$ is $g$. Obviously, the degree of $D$ is $({\underline{\phi}},{\underline{\mu}})$.
Thus $D$ is a floor diagram contributing to 
$\langle ({\underline{\phi}}^-,{\underline{\mu}}^-)|\tau_{k_1}(pt)\ldots\tau_{k_n}(pt)|({\underline{\phi}}^+,{\underline{\mu}}^+)\rangle_{g}^{\floor}$.

\end{proof}

\begin{proposition}\label{prop-floormult}
Let $D$ be a floor diagram contributing to 
\[
\langle ({\underline{\phi}}^-,{\underline{\mu}}^-)|\tau_{k_1}(pt)\ldots\tau_{k_n}(pt)|({\underline{\phi}}^+,{\underline{\mu}}^+)\rangle_{g}^{\floor}.
\] 
The weighted number of tropical stable maps contributing to 
\[
\langle ({\underline{\phi}}^-,{\underline{\mu}}^-)|\tau_{k_1}(pt)\ldots\tau_{k_n}(pt)|({\underline{\phi}}^+,{\underline{\mu}}^+)\rangle_{g}^{\trop}
\] 
that yield $D$ under the procedure described in Construction \ref{const-floor} equals  $\mult(D)$.
\end{proposition}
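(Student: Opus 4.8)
The plan is to compare the two sides of the identity vertex-by-vertex of $D$: each vertex $V$ of $D$ is the shadow of a \emph{floor} of the tropical curve, the virtual multiplicity $m_{(\Gamma,f)}$ already factors through the floors by the vertex multiplicity formula, and the contribution of a single floor is computed by a one-point logarithmic descendant invariant of a Hirzebruch surface via the correspondence theorem~\ref{thm-corres}.

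First I would describe the set of $(\Gamma,f)$ producing a fixed $D$ under Construction~\ref{const-floor}. Such a rigid floor decomposed $(\Gamma,f)$ is, up to isomorphism, assembled from local building blocks, one for each vertex $V$ of $D$: the block at $V$ is the tropical stable map to $\mathbb F_k$ with $a=s_V$, with genus $g_V$ at the vertex carrying the marked point, one contracted marked end of descendant order $k_V$, the $s_V$ ends of direction $(0,-1)$ and the $s_V$ ends of direction $(k,1)$ forced by the size, and horizontal ends recording the half-edges of $D$ at $V$ with the expansion factors read off from $D$; by Construction~\ref{const-floor} the thickened half-edges of $D$ at $V$ are exactly the ones incident to the marked point, so the discrete data of the block is precisely the data $((\underline{\phi}_V^-,\underline{\mu}_V^-)\mid\tau_{k_V}(pt)\mid(\underline{\phi}_V^+,\underline{\mu}_V^+))$ of Definition~\ref{def-floormult-log}, and Definition~\ref{def-floor}(3) says exactly that the dimension equation~\eqref{dimension} holds for this one-point problem (while Definition~\ref{def-floor}(5) is the compatibility~\eqref{curveclass} for $a=s_V$). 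Since $(\Gamma,f)$ is rigid and the points lie in generic horizontally stretched position, Proposition~\ref{prop: non-rigid-defs} and the lemmas of Section~\ref{troflo} guarantee that the elevator edges of $\Gamma$ joining distinct blocks carry no moduli beyond the choice of the blocks themselves, and that no overvalent or hidden-cycle phenomena occur inside a block; hence the set of $(\Gamma,f)$ producing $D$ is identified with the product over the vertices $V$ of $D$ of the sets of such blocks, and, because the linear order on the vertices of $D$ pins the floors relative to one another, $\mathsf{Aut}(f)=\prod_V\mathsf{Aut}(f_V)$.

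Next I would show the weight is multiplicative under this identification. Applying the vertex multiplicity formula (the Proposition preceding Remark~\ref{rem-factorsoftropmultareas}),
\[
m_{(\Gamma,f)}=\frac{1}{|\mathsf{Aut}(f)|}\prod_{e\in\mathsf{C.E.}(\Gamma)}w(e)\prod_{W\in\Gamma}\mult_W(\Gamma,f),
\]
and splitting $\mathsf{C.E.}(\Gamma)$ into the elevators descending to compact edges of $D$ and the edges internal to the individual blocks, and splitting the products $\prod_W$ and $\mathsf{Aut}(f)$ over the blocks, the weight $\frac{1}{|\mathsf{Aut}(f)|}m_{(\Gamma,f)}$ of $(\Gamma,f)$ from Definition~\ref{def-tropdescGWI} factors as $\prod_{e\in\mathsf{C.E.}(D)}w(e)$ times the product over $V$ of the weight carried by the block at $V$ as a tropical stable map to $\mathbb F_k$ with $a=s_V$. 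Summing over all $(\Gamma,f)$ producing $D$ and using the previous paragraph,
\[
\sum_{(\Gamma,f)\rightsquigarrow D}\frac{m_{(\Gamma,f)}}{|\mathsf{Aut}(f)|}=\prod_{e\in\mathsf{C.E.}(D)}w(e)\cdot\prod_{V}\Bigl(\sum_{\text{blocks at }V}\frac{m}{|\mathsf{Aut}|}\Bigr),
\]
where for each $V$ the inner sum runs over the rigid tropical stable maps to $\mathbb F_k$ (with $a=s_V$) satisfying the one-point constraint attached to $V$, the $\underline{\phi}_V$-ends being assigned generic fixed $y$-coordinates; that these are exactly the blocks at $V$ uses Lemma~\ref{lem-gammaminuspoint} together with Corollary~\ref{cor-notsuperabundant} and Lemma~\ref{lem-3valent} on $\mathbb F_k$, and that the answer is independent of the chosen $y$-coordinates is the tropical shadow of the boundary-point invariance of $\langle\,\rangle^{\mathsf{log}}$. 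By Definition~\ref{def-tropdescGWI} the inner sum is thus the tropical one-point descendant invariant $\langle(\underline{\phi}_V^-,\underline{\mu}_V^-)\mid\tau_{k_V}(pt)\mid(\underline{\phi}_V^+,\underline{\mu}_V^+)\rangle_{g_V}^{\trop}$ of $\mathbb F_k$.

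It then remains to invoke the correspondence theorem~\ref{thm-corres}, applied to $\mathbb F_k$ with $a=s_V$, the single insertion $\tau_{k_V}(pt)$, and the boundary tangency data $(\underline{\phi}_V,\underline{\mu}_V)$ read from $D$: it identifies this tropical one-point invariant with the logarithmic one, which is $\mult(V)$ by Definition~\ref{def-floormult-log}. Combining,
\[
\sum_{(\Gamma,f)\rightsquigarrow D}\frac{m_{(\Gamma,f)}}{|\mathsf{Aut}(f)|}=\prod_{e\in\mathsf{C.E.}(D)}w(e)\cdot\prod_V\mult(V)=\mult(D),
\]
which is the assertion. The step I expect to be the main obstacle is the decomposition/gluing bookkeeping behind the two displayed factorizations: one must check that rigidity together with the generic horizontally stretched position really leaves no residual parameter in the elevators joining distinct floors (so that no gluing contribution beyond $\prod_{\mathsf{C.E.}(D)}w(e)$ appears), that the thickening convention of $D$ matches — floor by floor — the fixed/moving boundary-contact data entering $\mult(V)$ as well as the orientation-based data entering the vertex multiplicity formula (the ``exactly one thick half-edge per compact edge'' rule being the tropical counterpart of the one-fixed--one-moving relative condition in the degeneration formula), and that the automorphism factors multiply correctly. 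This is the same kind of verification carried out in the proof of Theorem~\ref{thm-flooralg}, and, as there, the cleanest approach is to phrase it as a bijection between the components of the decomposed maps and otherwise leave the explicit bookkeeping to the reader.
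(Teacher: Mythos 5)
Your proposal is correct and follows essentially the same route as the paper's own proof: decompose each contributing $(\Gamma,f)$ into its floors, use the vertex multiplicity formula to factor $m_{(\Gamma,f)}$ into edge weights times per-floor contributions, recognize the sum over floors at each vertex $V$ of $D$ as the tropical one-point descendant invariant, and invoke Theorem~\ref{thm-corres} to identify it with the logarithmic vertex multiplicity $\mult(V)$. The paper carries out the edge-weight bookkeeping somewhat more explicitly (matching compact-edge weights to areas of triangles dual to unmarked trivalent vertices, and noting that $\mathsf{Aut}(f)$ is trivial by the marking convention, so the automorphism factor you track is moot), but these are details of the same argument rather than a different strategy.
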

{
\begin{proof}


Let $(\Gamma,f)$ be a tropical stable map that yields $D$ using Construction \ref{const-floor}.


Then $(\Gamma,f)$ is rigid and all its non-marked vertices which are not adjacent to flat cycles are trivalent by the above. Using our convention of marking horizontal ends, it follows also that $(\Gamma,f)$ has no nontrivial automorphisms besides the ones coming from multiple edges in flat cycles.

Following Definition~\ref{def: local-vertex-multiplicity} and using Remark \ref{rem-factorsoftropmultareas}, $(\Gamma,f)$ contributes a product of
\begin{enumerate}
\item areas of triangles dual to non-marked vertices and factors $\frac{1}{w}$ for the weights of fixed ends, 
\item local vertex multiplicities for non-marked vertices adjacent to multiple edges in flat cycles, and
\item local marked vertex multiplicities $\mult_V(\Gamma,f)$.
\end{enumerate}
Every compact edge $e$ of $D$ of weight $w(e)$ comes from a bounded edge $e'$ of $\Gamma$ of weight $w(e)$. Since $e$ has precisely one non-thickened flag, $e'$ is adjacent to precisely one trivalent vertex $V$ not adjacent to a marked point or a flat cycle (see Lemma \ref{lem-3valent}).  Denote by $e''$ an edge in the floor which is adjacent to $V$.
Every non-horizontal edge in a floor is of direction $(0,1)+c\cdot (1,0)$ for some $c$ (by the balancing condition, the fact that the floor contains no cycles, and since we can connect every edge to an end of direction $(0,-1)$), and so the area of the triangle dual to $V$ (formed by the duals of $e'$ and  $e''$) is $w(e)$. 

Consider now the case of $m$ multiple edges, forming $m-1$ flat cycles as in Proposition \ref{lem-cyclesvisible} (C). One of the adjacent vertices carries a marked point. Thus for the other, say $V$, the multiple edges locally yield fixed ends. We can vary the position of the locally fixed ends without changing the local multiplicity. Thus, $\mult_V(\Gamma,f)$ decomposes as $m$ factors, each corresponding to a trivalent vertex with an adjacent horizontal fixed end of weight $w(e)$ where $e$ is the corresponding edge of $D$. Using the same arguments as before, $V$ contributes a product of the weights of its adjacent horizontal edges.

A non-fixed end of $\Gamma$ has to be adjacent to a marked point by rigidity, so it is not adjacent to a trivalent vertex as above. 
A fixed end of $\Gamma$ is adjacent to a trivalent vertex whose dual triangle has area $w(e)$ by the above. 

Altogether we can see that the first two items above --- the product over all areas of triangles dual to non-marked vertices in the dual subdivision of $(\Gamma,f)$ times the multiplicities of non-marked vertices adjacent to multiple edges forming flat cycles divided by factors $w$ for fixed ends --- equals the product of weights of the compact edges of $D$.

We cut $(\Gamma,f)$ into floors. Each floor $(\Gamma',f')$ can be viewed as a tropical stable map contributing to the count $$\langle ({\underline{\phi}}^-_V,{\underline{\mu}}^-_V)|\tau_{k_V}(pt)|({\underline{\phi}}^+_V,{\underline{\mu}}^+_V)\rangle_{g_V} $$ which gives the multiplicity of the floor viewed as a vertex $V$ of $D$.
As such, the floor contributes its tropical multiplicity, which is again a product as above. 

Let $v$ be a vertex of $D$. By Theorem \ref{thm-corres}, $\mult(v)$ equals the weighted sum of all floors $(\Gamma',f')$ of some $(\Gamma,f)$ that map to $v$ under Construction \ref{const-floor}. 
In this weighted count, each summand contributes with its tropical multiplicity as above. Since every end of $\Gamma'$ which is not adjacent to the marked point in $\Gamma'$ has to be fixed by rigidity, the only contribution we have for the whole floor is the local vertex multiplicity $\mult_V(\Gamma',f')$ of the vertex $V$ of $\Gamma'$ adjacent to the marked point. Thus, $\mult(v)$ equals the weighted sum over all floors that can possibly be inserted, each counted with the factor $\mult_V(\Gamma',f')$ where $V$ is the vertex adjacent to the marked point.



Since we can freely combine floors by gluing them to elevator edges as imposed by $D$, $\mult(D)$ equals the weighted count of all tropical stable maps contributing to the invariant \[
\langle ({\underline{\phi}}^-,{\underline{\mu}}^-)|\tau_{k_1}(pt)\ldots\tau_{k_n}(pt)|({\underline{\phi}}^+,{\underline{\mu}}^+)\rangle_{g}^{\trop}
\] 
and yielding $D$ under the procedure described in Construction \ref{const-floor}, where each tropical stable map is counted with a product of weights for the compact edges of $D$ times $\mult_V(\Gamma',f')$ where $V$ is the vertex adjacent to the marked point. 
We have seen above that the product of weights for the compact edges of $D$ equals the product of the areas of triangles dual to non-marked edges times the weights of multiple edges, divided by the weights of the fixed ends. Thus $\mult(D)$ equals the weighted count of all tropical stable maps yielding $D$, each weighted with its tropical multiplicity. The statement follows.

\end{proof}}


\section{Floor diagrams via the operator theory}\label{sec-fock}

In this section we build on work of Cooper and Pandharipande~\cite{CP12} and Block and G\"ottsche~\cite{BG14} and express relative descendant Gromov--Witten invariants of Hirzebruch surfaces as matrix elements for an operator on a Fock space. The results of this section continue to hold if one replaces the stationary logarithmic descendants of the previous section. We begin the section by reviewing the formalism of Fock spaces in our context.

Let $\mathcal{H}$ denote the algebra presented with generators $a_n,b_n$ for $n\in \mathbb{Z}$ satisfying the commutator relations
\begin{equation}
 [a_n,a_m]=0, \;\;\; [b_n,b_m]=0, \;\;\; [a_n, b_m]=n\cdot \delta_{n,-m},
 \end{equation}
 where $\delta_{n,-m}$ is the Kronecker symbol. We let $a_0=b_0=0$.

The Fock space $F$ is the vector space generated by letting the generators $a_n, b_n$ for $n<0$ act freely (as linear operators) on the so-called vacuum vector $v_\emptyset$. We define $a_n\cdot v_\emptyset=b_n\cdot v_\emptyset=0$ for $n>0$. For a pair of partitions ${\underline{\phi}}=(\varphi_1,\ldots,\varphi_{n_1})$ and ${\underline{\mu}}=(\mu_1,\ldots,\mu_{n_2})$, we denote
\begin{equation}
v_{{\underline{\phi}},{\underline{\mu}}}=\frac{1}{|\Aut({\underline{\phi}})|\cdot |\Aut({\underline{\mu}})|}a_{-\varphi_1}\cdot \ldots \cdot a_{-\varphi_{n_1}} \cdot b_{-\mu_1}\cdot \ldots \cdot b_{-\mu_{n_2}}\cdot v_{\emptyset}.
\end{equation}
 The vectors $\{v_{{\underline{\phi}},{\underline{\mu}}}\}$ indexed by pairs of partitions ${\underline{\phi}}$, ${\underline{\mu}}$ form a basis for $F$. We define an inner product on $F$ by declaring $\langle v_\emptyset | v_\emptyset \rangle=1$ and $a_n$ to be the adjoint of $a_{-n}$, $b_n$ of $b_{-n}$.  The structure constants for the inner product in the two-partition basis are:
  \begin{equation} \langle v_{{\underline{\phi}},{\underline{\mu}}}| v_{{\underline{\phi}}',{\underline{\mu}}'}\rangle = \prod \varphi_i\cdot \prod \mu_i \cdot \frac{1}{|\Aut({\underline{\phi}})|}\cdot \frac{1}{|\Aut({\underline{\mu}})|}\cdot \delta_{{\underline{\phi}}, {\underline{\mu}}'}\cdot \delta_{{\underline{\mu}},{\underline{\phi}}'}. \end{equation}

Following standard conventions, for $\alpha, \beta \in F$ and an operator $A\in \mathcal{H}$, we write $\langle \alpha|A|\beta\rangle$ for $\langle \alpha|A\beta\rangle$. Such expressions are referred to as \textbf{matrix elements}. We write $\langle A \rangle$ for $\langle v_\emptyset |A|v_\emptyset \rangle$; such a value is called a \textbf{vacuum expectation}.

We also introduce \textbf{normal ordering} of operators in $\mathcal{H}$.  If $c_i, i=1,\dots, n$ are operators in $\mathcal{H}$, then the normally ordered product $:\prod_{i=1}^n c_i:$ reorders the $c_i$ so that any $c_i$ with $i>0$ occurs after the $c_j$ with $j<0$.  For example, we have $$:a_2b_{-2}a_2a_{-1}\!: \ \ = \ \ b_{-2}a_{-1}a_2a_2.$$

As before, we fix $k\in \NN$ to identify a Hirzebruch surface $\mathbb{F}_k$.

\begin{definition}\label{def-operator}
Let $m\in \NN_{>0}$, $l$, $s$ and $g\in \NN$ be given. Let ${\bf{z}}\in (\ZZ\smallsetminus \{0\})^m$ satisfy $\sum_{i=1}^m z_i=-k\cdot s$. 
Denote ${\underline{\mu}}=(z_1,\ldots,z_{l+2-2s-g})$ and ${\underline{\phi}}=(z_{l+2-2s-g+1}\ldots,z_m)$, and let superscripts $\pm$ denote the subsets of positive (resp. negative) entries.  

For a formal variable $u$, define $$\hat{a}_n=\begin{cases}u a_n &\mbox{ if } n<0\\ a_n &\mbox{ if } n>0 \end{cases} \;\;\;\; \mbox{ and } \;\;\;\;\hat{b}_n=\begin{cases}u b_n &\mbox{ if } n<0\\ b_n &\mbox{ if } n>0 \end{cases}.$$

We define the following series of operators in $\mathcal{H}[t,u]$, indexed by  $l\in \NN$:
\begin{multline*}M_l=\sum_{g\in \NN} u^{g-1} \sum_{s\in \NN} t^s \sum_{m\in \NN_{>0}} \sum_{\bf{z}\in \ZZ^{m}} 
\left\langle {(\underline{\phi}}^-,{\underline{\mu}}^- )| 
\tau_{l}(pt) 
|({\underline{\phi}}^+,{\underline{\mu}}^+)\right\rangle^{\mathsf{rel}}_{g} \cdot \\:\hat{b}_{z_1}\cdot \ldots \cdot \hat{b}_{z_{l+2-2s-g}}\cdot \hat{a}_{z_{l+2-2s-g+1}}\cdot \ldots \cdot \hat{a}_{z_m}:
\end{multline*}
where the fourth sum is taken over all $\bf{z}$ satisfying $\sum_i z_i=-k\cdot s$ (where $s$ is the index of the second sum), and where the 
 one-point Gromov--Witten invariant $\langle {(\underline{\phi}}^-,{\underline{\mu}}^- )| \tau_{l}(pt) |({\underline{\phi}}^+,{\underline{\mu}}^+)\rangle^{\mathsf{rel}}_{g}$
depends on the indices $l$, $g$ and $\bf{z}$ as above.


\end{definition}



\begin{remark}
Consider the operator $M_0$. It has only two summands for $s$, $s=0$ and $s=1$, since $2-2s-g<0$ for $s>1$. 
If $s=0$, the curve class in the Gromov--Witten invariant $\langle ({\underline{\phi}}^-,{\underline{\mu}}^-)|\tau_{0}(pt)| ({\underline{\phi}}^+,{\underline{\mu}}^+)\rangle^{\mathsf{rel}}_{g}$ is a multiple of the class of a fiber. This implies that the moduli space of maps is non-empty only if $g=0$ and $m=2$.
The invariant $\langle {{\underline{\mu}}}^-|\tau_{0}(pt)|{{\underline{\mu}}}^+\rangle^{\mathsf{rel}}_{0}$, for ${\underline{\mu}} = (d,-d)$ is readily seen to be $1$: there is a unique map of degree $d$ from a rational curve to the fiber identified by the point condition, fully ramified at $0$ and $\infty$ (the intersections of the sections with the given fiber). Such a map has no automorphisms because we have marked one point on the rational curve.

If $s=1$, we must have $g=0$ and no $b$ factors. The invariants  $\langle {{\underline{\phi}}}^-| \tau_{0}(pt)| {{\underline{\phi}}}^+\rangle^{\mathsf{rel}}_{0}$ are all $1$ by the genus $0$ correspondence theorem and a tropical computation, see~\cite{NS06,R15b}.


So we have
$$M_0=\sum_{z_1+z_2=0}b_{z_1}\cdot b_{z_2}+ \sum_{{\underline{\phi}}\in (\ZZ\smallsetminus \{0\})^m} t\cdot u^{\#\{{\underline{\phi}}^-\}-1} a_{z_1}\cdot \ldots\cdot a_{z_m},  $$
where the second sum goes over all ${\mathbf{z}}\in (\ZZ\smallsetminus \{0\})^m$ satisfying $\sum z_i=-k$.  Here the normal ordering is unnecessary since the $a_i$ commute amongst themselves, as do the $b_j$.  
Since the genus can be computed from the Euler characteristic of the underlying Feynman graphs, the variable $u$ is superficial in this scenario. Setting $u=1$, we obtain the operator $H_k(t)$ defined in \cite{BG14}, Theorem 1.1. Our family of operators $M_l$ generalizes the operator of Block-G\"ottsche to one operator for each power of descendant insertions.
\end{remark}

\begin{theorem}\label{thm-operator}

With discrete data fixed 
 as in Notation \ref{not-delta}, 
the disconnected relative descendant Gromov--Witten invariant $\langle ({\underline{\phi}}^-,{\underline{\mu}}^-)|\tau_{k_1}(pt)\ldots\tau_{k_n}(pt)|({\underline{\phi}}^+,{\underline{\mu}}^+)\rangle_{g}^{\mathsf{rel},\bullet}$ equals the matrix element
\begin{align}\label{eq-operatornonmarked}
\begin{split}
&\langle ({\underline{\phi}}^-,{\underline{\mu}}^-)|\tau_{k_1}(pt)\ldots\tau_{k_n}(pt)|({\underline{\phi}}^+,{\underline{\mu}}^+)\rangle_{g}^{\mathsf{rel},\bullet}\\
=& \frac{|\Aut({\underline{\mu}})|}{\prod |\mu_i|}\frac{|\Aut({\underline{\Phi}})|}{\prod |\phi_i|}\left\langle v_{{\underline{\mu}}^-,{\underline{\phi}}^-}\left|\mbox{ Coeff}_{t^a u^{g+\ell({\underline{\phi}}^-) +\ell({\underline{\mu}}^-)-1}}\Big(\prod_{i=1}^n M_{k_i} \Big) \right|v_{{\underline{\mu}}^+,{\underline{\phi}}^+}\right\rangle,
\end{split}
\end{align}
where the operators $M_{k_i}$ are as defined in Definition \ref{def-operator}, and  for a series of operators $M\in \mathcal{H}[t,u]$ $\mbox{Coeff}_{t^a u^h}(M)\in \mathcal{H}$ denotes the $t^au^h$-coefficient.
\end{theorem}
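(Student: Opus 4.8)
The plan is to identify both sides of the claimed identity with the same weighted count of floor diagrams. On the left‑hand side, I would first invoke Theorem~\ref{thm-flooralg} to replace the disconnected relative descendant invariant $\langle(\underline\phi^-,\underline\mu^-)|\tau_{k_1}(pt)\cdots\tau_{k_n}(pt)|(\underline\phi^+,\underline\mu^+)\rangle^{\mathsf{rel},\bullet}_g$ by the floor diagram count $\langle(\underline\phi^-,\underline\mu^-)|\cdots|(\underline\phi^+,\underline\mu^+)\rangle^{\bullet,\floor}_g$, which by Definitions~\ref{def-floorcount} and~\ref{def-floormult} equals $\sum_D\bigl(\prod_{e\in C.E.}w(e)\bigr)\bigl(\prod_V\mult(V)\bigr)$ with $\mult(V)=\langle(\underline\phi_V^-,\underline\mu_V^-)|\tau_{k_V}(pt)|(\underline\phi_V^+,\underline\mu_V^+)\rangle^{\mathsf{rel}}_{g_V}$, the sum running over floor diagrams of degree $(\underline\phi,\underline\mu)$ and genus $g$ whose $n$ vertices carry $\psi$-powers $k_1,\dots,k_n$ and have sizes summing to $a$. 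On the right‑hand side, since each $M_{k_i}$ is by Definition~\ref{def-operator} a series of normally ordered monomials in the $\hat a,\hat b$ whose coefficients are exactly these one‑point invariants, I would expand the matrix element by Wick's theorem — computing vacuum expectations of products of normally ordered operators as a sum over complete contractions, exactly as in~\cite{CP12,BG14} — to obtain a weighted sum over Feynman graphs (Definition~\ref{def-Feynmangraph}). The whole statement then reduces to exhibiting a weight‑preserving bijection between these Feynman graphs and the floor diagrams above.

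The bijection I have in mind is built vertex by vertex. A Feynman graph picks, for each $i$, a term of $M_{k_i}$ — that is, data $(g_i,s_i,\mathbf z^{(i)})$, equivalently an ordered block of $b$‑legs followed by a block of $a$‑legs whose labels sum to $-ks_i$ — together with a complete pairing of all legs (those of the $M_{k_i}$ among themselves and with the operators of $v_{\underline\mu^\pm,\underline\phi^\pm}$), each pair joining an $a_n$ to the matching $b_{-n}$ with the annihilation operator on the left. I would send the $i$‑th Feynman vertex to a floor vertex with $(g_V,s_V,k_V)=(g_i,s_i,k_i)$, its $b$‑legs to the thickened flags and its $a$‑legs to the normal flags; an internal pair becomes a compact edge, which automatically carries exactly one thickened half‑edge since one end is an $a$‑leg and the other a $b$‑leg, and a pair meeting $v_{\underline\mu^\pm,\underline\phi^\pm}$ becomes an end marked by the corresponding part of $(\underline\phi,\underline\mu)$. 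The remaining defining properties of a floor diagram fall out of the definition of $M_{k_i}$: a leg with label $z<0$ is a creation operator pointing to a smaller‑indexed vertex while a leg with $z>0$ is an annihilation operator pointing right, so the order of the $M_{k_i}$ induces the linear order on vertices; the constraint that the labels at vertex $i$ sum to $-ks_i$ is condition~(5) of Definition~\ref{def-floor}; and the number $k_i+2-2s_i-g_i$ of $b$‑legs is condition~(3). I would then check this assignment is a bijection — the inverse reads off the internal data of each vertex from the expansion factors and directions of its flags — and that it preserves the first Betti number, hence the genus.

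For the weights, each Feynman vertex contributes its one‑point relative invariant, which is precisely $\mult(V)$, and each contraction $a_n\leftrightarrow b_{-n}$ contributes the commutator value $|n|$; the internal contractions give $\prod_{e\in C.E.}w(e)$, while the $\ell(\underline\phi)+\ell(\underline\mu)$ contractions meeting $v_{\underline\mu^\pm,\underline\phi^\pm}$ produce the spurious factor $\prod|\mu_i|\prod|\phi_i|$, which is exactly cancelled by the denominator of the prefactor $\tfrac{|\Aut(\underline\mu)|\,|\Aut(\underline\phi)|}{\prod|\mu_i|\prod|\phi_i|}$. The remaining symmetry factors — the $\tfrac1{|\Aut(\cdot)|}$ in each $v_{\underline\mu^\pm,\underline\phi^\pm}$, the multiplicities coming from the fact that $M_{k_i}$ is summed over ordered tuples $\mathbf z$ while the $\hat a$'s and the $\hat b$'s each commute, and the automorphisms of the Feynman graph — should combine to leave precisely $|\Aut(\underline\mu)|\,|\Aut(\underline\phi)|$, by the same bookkeeping as in~\cite{CP12,BG14}. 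Finally I would match the monomial gradings: extracting the $t^a$‑coefficient imposes $\sum_V s_V=a$, and extracting the $u^{g+\ell(\underline\phi^-)+\ell(\underline\mu^-)-1}$‑coefficient imposes the genus constraint, because the total power of $u$ equals $\sum_V(g_V-1)$ from the factors $u^{g_V-1}$ together with one $u$ per creation operator — one for each compact edge and one for each end marked by $\underline\phi^-$ or $\underline\mu^-$ — which by the (disconnected) genus convention for floor diagrams is $g-1+\ell(\underline\phi^-)+\ell(\underline\mu^-)$ exactly when $D$ has genus $g$.

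The step I expect to be the real obstacle is not conceptual but is the careful bookkeeping in the last two points: verifying that Wick's theorem applies cleanly to the normally ordered operators $M_{k_i}$, and especially tracking the interaction of the automorphism and symmetrization factors and of the powers of $u$ and $t$, so that after dividing by the stated prefactor the matrix element reproduces $\mult(D)$ on the nose for every floor diagram. By contrast, descendants — the genuinely new ingredient compared with~\cite{CP12,BG14} — enter only through the one‑point invariants attached to the vertices and through the valence condition~(3), and so require no essentially new idea beyond the enlarged family $\{M_l\}$ of operators.
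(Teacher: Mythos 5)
Your proposal follows essentially the same route as the paper's proof: rewrite the left side as a floor diagram count via Theorem~\ref{thm-flooralg}, expand the matrix element as a vacuum expectation and apply Wick's theorem (Proposition~\ref{prop-wick}) to get a Feynman graph sum, then exhibit a weight-preserving bijection between Feynman graphs (with external legs stripped) and floor diagrams, tracking the $t$- and $u$-gradings exactly as the paper does. The one place you flag as a potential obstacle — the cancellation of the $|\Aut|$ factors — is handled cleanly in the paper by the identity $|\Aut(\underline\mu)| = |\Aut(\underline\mu^+)|\cdot|\Aut(\underline\mu^-)|$ (valid since $\underline\mu$ is non-decreasing), which makes those factors cancel already in the passage to the vacuum expectation, before any Feynman graph combinatorics enter.
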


\noindent{\bf Important detail.} Notice the order of the partitions is switched on the two sides of Equation \eqref{eq-operatornonmarked}, thus the $\mu_i$ entries are associated to $a$ variables and vice-versa.

Before we start a formal proof of Theorem \ref{thm-operator}, we make a relevant definition and recall an important tool for the proof.


After translating the matrix element in Equation \eqref{eq-operatornonmarked} to a vacuum expectation, we  compute it as the weighted sum over \textbf{Feynman graphs} associated to each monomial contributing to the expectation. This can be viewed as a variant of Wick's theorem \cite{Wic50} and is proved in Proposition 5.2 of \cite{BG14}.
Generalizing the situation in \cite{BG14}, the Feynman graphs in question are essentially floor diagrams and Theorem \ref{thm-operator} follows because of a natural weighted bijection of Feynman graphs and floor diagrams.

\begin{definition}\label{def-Feynmangraph}
Let $P=m_+\cdot m_1\cdot \ldots \cdot m_n\cdot m_{-}$ be  a product of  monomials in the variables $a_s$ or $b_s$, such that:
\begin{itemize}
\item for each monomial,  all operators with negative indices stand left of all operators with positive indices;
\item $m_+$ contains only positive factors  (with $s>0$);
\item $m_-$ contains only negative factors (with $s<0$).
\end{itemize}
We associate graphs to $P$ called \textbf{Feynman graphs} for $P$, via the following algorithm.

\noindent{\bf Step 1: local pieces.}
To any monomial $m_i$, associate a star graph with vertex denoted $v_i$: for each factor $a_s$ appearing in $m_i$, draw a (non-thickened) edge germ of weight $|s|$ which is directed to the left if $s<0$ and to the right if $s>0$. For each factor $b_s$,  draw a thickened edge germ of weight $|s|$ which is directed to the left if $s<0$ and to the right if $s>0$.

To the special monomials $m_+, m_-$ associate a collection of disconnected, marked edge germs of weight equal to the absolute value of the index of each operator appearing in the monomials. Thicken the  germs corresponding to the operators $b_s$.

\noindent{\bf Step 2: Feynman fragment.}
We call the Feynman fragment associated to $P$ the disconnected graph obtained by  linearly ordering the union of all the local pieces: first come the edge germs relative to $m_+$, then  vertices $v_i$ (ordered according to their index $i$, and finally the edge germs corresponding to $m_-$.

\noindent{\bf Step 3: filling the gaps.} A Feynman graph completing the Feynman fragment is any (marked, weighted, ordered) graph obtained by promoting edge germs to half edges, and gluing pairs of half edges until there is none left. A pair of half edges may be glued if:
\begin{itemize}
\item one is directed to the right and the other to the left, and the vertex adjacent to the germ directed to the right is smaller than the one adjacent to the germ directed to the left,
\item the two edge germs have the same weight, and
\item one edge germ is thickened and one is not.
\end{itemize}
\end{definition}

\begin{example}\label{ex-P}
Let $P$ be the product 
$$P=(b_2\cdot a_1\cdot a_2) \cdot (b_{-2}\cdot b_2) \cdot (a_{-2}\cdot b_{-1}\cdot a_2)\cdot (b_{-2}\cdot a_{-2}\cdot a_1\cdot a_1)\cdot (b_{-1}\cdot b_1)\cdot (b_{-1}\cdot a_{-1}),$$ where the factors $m_i$ are separated by parentheses.
Following Definition \ref{def-Feynmangraph}, a Feynman graph for $P$ is any graph completing the Feynman fragment depicted in Figure \ref{fig-edgegerms}.
\begin{figure}
\input{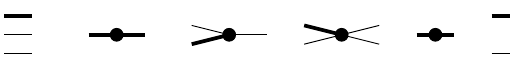_t}
\caption{The weighted, directed, possibly thickened edge germs corresponding to the product $P$ in Example \ref{ex-P}. (We drop the marking of edge germs in the picture.)}
\label{fig-edgegerms}
\end{figure}
In Figure \ref{fig-exFeynman}, the dotted lines  suggest a  way to complete the  fragment to a Feynman graph for $P$. After removing all external half edges, we recognize the floor diagram depicted in Figure \ref{fig-floordiagram2a}. 
\begin{figure}
\input{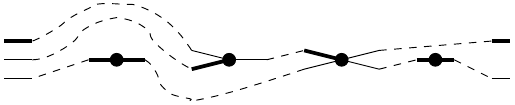_t}
\caption{A Feynman graph completing the edge germs associated to the product $P$ in Example \ref{ex-P}.}\label{fig-exFeynman}
\end{figure}
\end{example}

\begin{proposition}[Wick's Theorem, see Proposition 5.2 of \cite{BG14}]\label{prop-wick}
The vacuum expectation $\langle P\rangle$ for a product $P$ as in Definition \ref{def-Feynmangraph} equals the weighted sum of all Feynman graphs for $P$, where each Feynman graph is weighted by the product of weights of all edges (interior edges and ends).
\end{proposition}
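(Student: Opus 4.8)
The plan is to deduce Proposition~\ref{prop-wick} from the purely algebraic form of Wick's theorem in the Heisenberg algebra $\mathcal{H}$ (the normal-ordering expansion), and then to match, term by term, the complete contractions appearing there with the Feynman graphs of Definition~\ref{def-Feynmangraph}. Call an operator $a_n$ or $b_n$ with $n>0$ an \emph{annihilation operator} and one with $n<0$ a \emph{creation operator}: the vacuum is killed by annihilation operators standing on the right and, dually, by creation operators standing on the left. For two elementary operators $A,B$ write $\langle AB\rangle$ for the scalar $AB-{:}AB{:}$; from the commutation relations this scalar vanishes unless $A$ is an annihilation operator and $B$ a creation operator \emph{of the other type} (one an $a$, the other a $b$) with opposite indices, in which case $[a_s,b_{-s}]=s$ and $[b_s,a_{-s}]=s$ give $\langle AB\rangle=|s|$, where $\pm s$ are the two indices. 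In particular two $a$'s, or two $b$'s, never contract, because they commute among themselves.

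First I would establish the algebraic Wick expansion: for elementary operators $A_1,\dots,A_N$,
\[
A_1\cdots A_N \;=\; \sum_{\pi}\ \Big(\prod_{(i,j)\in\pi,\ i<j}\langle A_i A_j\rangle\Big)\; {:}\prod_{k\notin\pi}A_k{:}\,,
\]
the sum running over all sets $\pi$ of disjoint pairs of indices in $\{1,\dots,N\}$. This is proved by induction on $N$ by moving $A_1$ to the right through the Wick expansion of $A_2\cdots A_N$: if $A_1$ is a creation operator then $A_1\,{:}\!\cdots\!{:}={:}\,A_1\cdots{:}$ and no new contraction appears, while if $A_1$ is an annihilation operator, commuting it past a normally ordered product produces exactly one new contraction term $\langle A_1 A_k\rangle\,{:}\cdots{:}$ (with $A_1$ and $A_k$ deleted) for each matching creation operator $A_k$ it passes, and commutes freely past everything else; this is precisely the inductive step. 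Taking the vacuum expectation and using that $\langle{:}X{:}\rangle=0$ for every nonempty normally ordered product $X$ while $\langle 1\rangle=1$, only the \emph{complete} pairings survive, so
\[
\langle A_1\cdots A_N\rangle \;=\; \sum_{\pi\ \text{complete}}\ \prod_{(i,j)\in\pi}\langle A_i A_j\rangle .
\]

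Applying this with $A_1\cdots A_N$ the list of all elementary operators occurring in $P=m_+ m_1\cdots m_n m_-$ read left to right, it remains to identify the complete pairings $\pi$ of nonzero weight with the Feynman graphs of $P$, compatibly with weights. A pair $(i,j)$, $i<j$, contributes a nonzero factor precisely when $A_i$ is an annihilation and $A_j$ a creation operator of opposite type and opposite index; translating through Step~1 of Definition~\ref{def-Feynmangraph}: $A_i$ an annihilation operator corresponds to a right-directed germ, $A_j$ a creation operator to a left-directed germ; opposite type means one germ is thickened and the other is not; opposite index of common absolute value $s$ means the two germs have the same weight $s$; and the condition $i<j$, combined with the fact that within any single monomial the creation operators already stand to the left of the annihilation operators (so that no within-monomial pair can have $A_i$ an annihilation with $i<j$), says exactly that the right-directed germ sits at a strictly smaller vertex than the left-directed one. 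These are precisely the three gluing conditions of Step~3, and complete pairings correspond to gluing up \emph{all} half-edges. Hence complete pairings of nonzero weight are in bijection with the (marked) Feynman graphs completing the fragment of $P$, and under this bijection $\prod_{(i,j)\in\pi}\langle A_i A_j\rangle=\prod_{(i,j)}|s_{ij}|$ is exactly the product of the weights of the edges of the graph, ends included. Combined with the displayed formula for $\langle A_1\cdots A_N\rangle=\langle P\rangle$, this proves the proposition.

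The only genuinely delicate points are bookkeeping: checking that no contraction is possible within a single monomial (so that the two endpoints of every glued edge lie at distinct, comparable vertices, which is what the hypotheses on $m_+$, $m_-$, and the $m_i$ guarantee), and checking that the commutator sign in $[a_n,b_m]=n\delta_{n,-m}$ yields the weight $+|s|$ regardless of whether the annihilation operator of the pair is an $a$ or a $b$ -- both verified above. The tracking of which germs come from $m_+$ and $m_-$ (these become the external edges of the completed graph, which after deleting external half-edges are the ends of the floor diagram, as in Example~\ref{ex-P}) versus from the $v_i$ is purely notational and does not affect the weight count.
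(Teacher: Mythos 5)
Your proof is correct and proceeds by the same underlying mechanism as the paper's (informal) proof: both rest on the repeated-commutation argument that turns $\langle P\rangle$ into a sum over complete contractions, each contributing the product of the commutator weights $|s|$, and then identify contractions with edges using the thickened/unthickened, equal-weight, and vertex-ordering rules of Definition~\ref{def-Feynmangraph}. The only difference is presentational: you first state and prove the full normal-ordering Wick expansion and then take the vacuum expectation, whereas the paper directly sketches the "move the rightmost positive-index operator to the right" process; your check that no within-monomial pair can contract and that $[b_s,a_{-s}]=[a_s,b_{-s}]=s>0$ (so the weight is always $+|s|$ regardless of which of the pair is the $a$) makes the argument airtight.
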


A detailed proof of this proposition may be found in \cite{BG14}. Here we provide an intuitive and informal description of the mechanism that underlies the proof, as we feel this will be more beneficial to a reader who is not already an expert on these techniques.

\begin{proof}
In the product $P$, we take the right most factor $a_i$ or $b_i$ with $i>0$, and try to move it to the right. To simplify notations, let us assume that this right most factor is $a_i$ for some $i>0$. If this factor $a_i$ reaches the very right in a contribution we produce in this way (i.e.\ ends up being the right most factor of a contributing term), then we obtain zero since by definition $a_i\cdot v_\emptyset=b_i\cdot v_\emptyset=0$ for $i>0$. The commutator relations produce several contributing terms for $\langle P \rangle$ when moving $a_i$ to the right. We can make $a_i$ jump over any $a_j$, or $b_k$ with $k\neq -i$. If $a_i$ is the left neighbour of $b_{-i}$ however, the commutator relation replaces $a_ib_{-i}$ by $b_{-i}a_{i}+i$. That is, we get two summands, one in which we manage to move $a_i$ further to the right, and one where we cancel this factor together with its neighbour $b_{-i}$. 

With both summands, we continue moving the right most factor with positive index right. For the summand in which we cancel $a_i$ together with a factor of $b_{-i}$ appearing right of $a_i$ in $P$, we add to the Feynman fragment of $P$ by drawing an edge connecting the germ corresponding to $a_i$ and the germ corresponding to $b_{-i}$.

By following this procedure we draw all Feynman graphs completing the Feynman fragment for $P$. Each Feynman graph corresponds to a way to group the factors of $P$ in pairs $\{a_i,b_{-i}\}$ corresponding to edges completing the corresponding marked edge germs. Each such pair produces a contribution of $i$ because of the commutator relations, so altogether each Feynman graph should be counted with  weight  equal to the product of its edge weights to produce $\langle P \rangle$.
\end{proof}

\begin{proof}[Proof of Theorem \ref{thm-operator}:]
First we express the matrix element in Equation \eqref{eq-operatornonmarked} as a vacuum expectation:
\begin{align} \label{eq-expval}
&\frac{|\Aut({\underline{\mu}})|}{\prod |\mu_i|}\frac{|\Aut({\underline{\phi}})|}{\prod |\varphi_i|}\left\langle v_{{\underline{\mu}}^-,{\underline{\phi}}^-}\left|M \right|v_{{\underline{\mu}}^+,{\underline{\phi}}^+}\right\rangle = \nonumber\\ 
& \frac{|\Aut({\underline{\mu}})|}{\prod |\mu_i|}\frac{|\Aut({\underline{\phi}})|}{\prod |\varphi_i|}
\frac{1}{|\Aut({\underline{\phi}}^+)| |\Aut({\underline{\mu}}^+)|}\frac{1}{|\Aut({\underline{\phi}}^-)| |\Aut({\underline{\mu}}^-)|}\cdot
 \nonumber\\
 &
\left\langle v_{\emptyset}\left| \prod_{\mu_i \in \underline{\mu}^-}a_{|\mu_i|} \prod_{\varphi_i \in \underline{\phi}^-} b_{|\varphi_i|} \ \
M \prod_{\mu_i \in \underline{\mu}^+}a_{-\mu_i} \prod_{\varphi_i \in \underline{\phi}^+} b_{-\varphi_i} \right|v_{\emptyset}\right\rangle=\nonumber\\
&
\frac{1}{\prod |\varphi_i|\prod |\mu_i|}\left\langle \prod_{\mu_i \in \underline{\mu}^-}a_{|\mu_i|} \prod_{\varphi_i \in \underline{\phi}^-} b_{|\varphi_i|}\ \
M \prod_{\mu_i \in \underline{\mu}^+}a_{-\mu_i} \prod_{\varphi_i \in \underline{\phi}^+} b_{-\varphi_i} \right\rangle
\end{align}

 By Theorem \ref{thm-flooralg}, the left-hand side  in Equation \eqref{eq-expval} equals an appropriate count of floor diagrams. By Proposition \ref{prop-wick}, each term contributing to the right-hand side can be expressed in terms of a weighted count of suitable Feynman diagrams. We show that the floor diagrams contributing to the left-hand side are essentially equal to the Feynman graphs contributing to the right, and that they are counted with the same weight on both sides.

Expand the left-hand side so that it becomes a sum of vacuum expectations, where each summand is of the form $w_P \cdot P$ such that $w_P$ is a number and $P=m_+\cdot \ldots\cdot m_{-}$ a monomial as described in Definition \ref{def-Feynmangraph}. For each summand, 
$$m_+=\prod_{\mu_i \in \underline{\mu}^-}a_{|\mu_i|}\cdot \prod_{\varphi_i \in \underline{\phi}^-} b_{|\varphi_i|} \ \ \mbox{ and } \ \ \ m_{-}=\prod_{\mu_i \in \underline{\mu}^+}a_{-\mu_i}\cdot \prod_{\varphi_i \in \underline{\phi}^+} b_{-\varphi_i}.$$
 A factor $m_i$ for $i=1,\ldots,n$ comes from a summand of $M_{k_i}$, i.e.\ is of the form 
$$\langle ({\underline{\phi}}^-,{\underline{\mu}}^-)|\tau_{k_i}(pt)|({\underline{\phi}}^+,{\underline{\mu}}^+)\rangle_{g_i}\cdot :\hat{b}_{z_1}\cdot \ldots \cdot \hat{b}_{z_{k_i+2-2s_i-g}}\cdot \hat{a}_{z_{k_i+2-2s_i-g_i+1}}\cdot \ldots \cdot \hat{a}_{z_m}:,$$ 
where $s_i$ is encoded in the power of $t$ and $g_i$ in the power of $u$.

  Enrich the Feynman fragment for $P$ by adding three numbers to each vertex $i$, namely the $\psi$-power $k_i$ (imposed by the operator $M_{k_i}$ of which the factor corresponding to vertex $i$ is taken), the size $s_i$ (imposed by the power of $t$) and the genus $g_i$ (imposed by the power of $u$). Any Feynman diagram completing this Feynman fragment is by definition a weighted loop-free graph with ends on the linearly ordered vertex set $v_1,\ldots,v_n$. After removing all external half edges, the conditions (1), (2) and (3) we impose in the definition of a floor diagram (Definition \ref{def-floor}) are satisfied.
By definition of the operator $M_l$ (see Definition \ref{def-operator}), the signed sum of weights of edges adjacent to a vertex equals $-k\cdot s_i$, so  condition (5) is satisfied. By definition of the operator $M_l$, in each factor $m_i$, exactly $k_i+2-2s_i-g_i$ factors are $b$-operators and thus correspond to thickened edge germs, so condition (4) is satisfied.  

Since we take the $t^a$ coefficient of the product $M_{k_1}\cdot \ldots \cdot M_{k_n}$ for the operator in Equation (\ref{eq-operatornonmarked}), we obtain floor diagrams satisfying $a=\sum s_i$. The degree $({\underline{\phi}},{\underline{\mu}})$ is determined by the boundary conditions. To see that the floor diagram is of the right genus, notice that the variable $u$ is in charge of genus. Let us build a Feynman graph from the left to the right, starting with the left ends, and adding in vertex after vertex from $1$ to $n$, taking the change in genus into account in each step. The genus of the graph consisting of $ \ell({\underline{\phi}}^-) +\ell({\underline{\mu}}^-)$ left ends (at first disconnected) has genus $-\ell({\underline{\phi}}^-) -\ell({\underline{\mu}}^-)+1$. For the vertex $i$ of local genus $g_i$, by definition of the operator $M_l$, we get a contribution of $u^{g_i-1}$, and we get as many additional factors of $u$ as the vertex has incoming edges (by the $\hat{a}_i$ resp.\ $\hat{b}_i$ convention). Since $h$ incoming edges potentially close up $h-1$ cycles, the vertex $i$ increases the genus by $g_i+h_i-1$, where $h_i$ denotes the number of incoming edges. By taking the $u^{g+\ell({\underline{\phi}}^-) +\ell({\underline{\mu}}^-)-1}$ coefficient in total, we thus obtain floor diagrams of genus $g$.

Each Feynman graph for $P$ can thus be viewed (after removing external half edges) as a floor diagram contributing to the left-right-handhand side, and vice versa, each floor diagram gives a Feynman graph.


It remains to show that a Feynman graph and the corresponding floor diagram contribute to Equation \eqref{eq-operatornonmarked}  with the same multiplicity. For the right-hand side, note that a Feynman graph contributes with the product of the weight of all of its egdes times  the coefficient $w_P$ of the product $P$ in the expansion of the product of the $M_l$-operators. Dividing by the factor $\frac{1}{|\mu_i|}\frac{1}{|\varphi_i|}$ (see the right-hand side of Equation \eqref{eq-expval}), we see that we are giving the Feynman graph weight equal to the product of the weights  of its internal edges times the factor $w_P = \prod_{v=1}^n \langle({\underline{\phi}}^-_v,{\underline{\mu}}^-_v)| \tau_{k_v}(pt)|({\underline{\phi}}^+_v,{\underline{\mu}}^+_v)\rangle^{\mathsf{rel}}_{g_v}$. This is precisely the weight of the corresponding floor diagram in Equation \eqref{def-floorcount}. 



 
\end{proof}

\bibliographystyle{siam}
\bibliography{PsiFockSurfaces}

\end{document}